\numberwithin{equation}{section}
\theoremstyle{plain}
\newtheorem{lemma}{Lemma}[section]
\newtheorem{theorem}[lemma]{Theorem}
\newtheorem{proposition}[lemma]{Proposition}
\newtheorem{corollary}[lemma]{Corollary}
\theoremstyle{definition}
\newtheorem{remark}[lemma]{Remark}
\newtheorem{definition}[lemma]{Definition}
\theoremstyle{remark}
\newcommand{\A}{\mathbb{A}}
\newcommand{\R}{\mathbb{R}}
\newcommand{\C}{\mathbb{C}}
\newcommand{\Z}{\mathbb{Z}}
\newcommand{\N}{\mathbb{N}}
\newcommand{\cL}{{\mathcal{L}}}
\newcommand{\cD}{{\mathcal{D}}}
\newcommand{\cR}{{\mathcal{R}}}
\newcommand{\ccC}{\mathscr{C}}
\newcommand{\ccB}{\mathscr{B}}
\newcommand{\tTh}{\widetilde{\Theta}}
\renewcommand{\Re}{\mathrm{Re}\,}
\newcommand{\ind}{\mathrm{ind}\,}
\newcommand{\nul}{\mathrm{nul}\,}
\newcommand{\ess}{\sigma_\mathrm{\tiny{ess}}}
\newcommand{\ptsp}{\sigma_\mathrm{\tiny{pt}}}
\newcommand{\sppi}{\sigma_\pi}
\newcommand{\spd}{\sigma_\delta}
\newcommand{\<}{\langle}
\renewcommand{\>}{\rangle}
\begin{document}

\title[Stability of diffusion-degenerate Fisher-KPP traveling fronts]{Spectral stability of traveling 
fronts for reaction diffusion-degenerate Fisher-KPP equations}

\author[J.F. Leyva]{J. Francisco Leyva}
 
\address{{\rm (J. F. Leyva)} Posgrado en Ciencias Matem\'{a}ticas\\Universidad Nacional Aut\'{o}noma 
de M\'{e}xico\\Circuito Exterior s/n, Ciudad Universitaria, C.P. 04510 Cd. de M\'{e}xico (Mexico)}

\email{jfleyva.84@gmail.com}

\author[R.G. Plaza]{Ram\'on G. Plaza}

\address{{\rm (R. G. Plaza)} Instituto de 
Investigaciones en Matem\'aticas Aplicadas y en Sistemas\\Universidad Nacional Aut\'onoma de 
M\'exico\\Circuito Escolar s/n, Ciudad Universitaria, C.P. 04510 Cd. de M\'{e}xico (Mexico)}

\email{plaza@mym.iimas.unam.mx}

\begin{abstract}
This paper establishes the spectral stability in exponentially weighted spaces of smooth traveling monotone fronts for reaction diffusion equations of Fisher-KPP type with nonlinear degenerate diffusion coefficient. It is assumed that the former is degenerate, that is, it vanishes at zero, which is one of the equilibrium points of the reaction. A parabolic regularization technique is introduced in order to locate a subset of the compression spectrum of the linearized operator around the wave, whereas the point spectrum is proved to be stable with the use of energy estimates. Detailed asymptotic decay estimates of solutions to spectral equations are required in order to close the energy estimates. It is shown that all fronts traveling with speed above a threshold value are spectrally stable in an appropriately chosen exponentially weighted $L^2$-space.
\end{abstract}

\keywords{Nonlinear degenerate diffusion, monotone traveling fronts, Fisher-KPP reaction-diffusion equations, 
spectral stability}

\subjclass[2010]{35K57, 35B40, 92C15, 92C17}

\maketitle

\setcounter{tocdepth}{1}



\section{Introduction}

In this paper we study scalar reaction-diffusion equations of the form
\begin{equation}
\label{degRD}
 u_t = (D(u)u_x)_x + f(u),
\end{equation}
where $u = u(x,t) \in \R$, $x \in \R$, $t > 0$, and the diffusion coefficient $D = D(u)$ is a nonlinear, 
non-negative density dependent function which is \textit{degenerate} at $u = 0$. More precisely, it is assumed that 
$D$ 
satisfies 
\begin{equation}
\label{hypD}
\begin{aligned}
& D(0) = 0, \;\; \, D(u) > 0 \; \; \text{for all} \, u \in (0,1],\\
& D \in C^2([0,1];\R) \;\; \text{with} \; D'(u) > 0 \; \text{for all} \; u \in [0,1].
\end{aligned}
\end{equation}
As an example we have the quadratic function 
\begin{equation}
\label{Dbeta}
 D(u) = u^2 + b u,
\end{equation}
for some constant $b > 0$, as proposed by Shigesada \textit{et al.} \cite{SKT79} to model dispersive forces 
due to mutual interferences between individuals of an animal population.

The nonlinear reaction function is supposed to be of \textit{Fisher-KPP type} \cite{Fis37,KPP37}, that is, $f 
\in C^2([0,1];\R)$ has one stable and one unstable equilibrium points in $[0,1]$; more precisely,
\begin{equation}
 \label{Fisherf}
 \begin{aligned}
	&f(0)=f(1)=0,\\
	&f'(0) > 0, \;\, f'(1)<0,\\
	&f(u)>0\textrm{ for all } u \in (0,1).
	\end{aligned}
\end{equation}
An example of a reaction of Fisher-KPP type is the classical logistic function,
\begin{equation}
 \label{logistic}
f(u) = u (1-u),
\end{equation}
which is used to model the dynamics of a population in an environment with limited resources.


Nonlinear reaction-diffusion equations arise as models in several natural phenomena, with applications to 
population dynamics, chemical reactions with diffusion, fluid mechanics, action potential propagation, and 
flow in porous media (see \cite{Fowl97,MurI3ed,Muskat37} and the references therein). The celebrated 
equation $u_t = u_{xx} + u(1-u)$, known as the \textit{Fisher-KPP reaction diffusion equation}, was 
introduced in seminal works by Fisher \cite{Fis37} and by Kolmogorov, Petrosky and Piskunov 
\cite{KPP37} as a model to describe the spatial one-dimensional spreading when mutant individuals with higher 
adaptability appear in populations. Since then, very intensive research has been carried out to extend their 
model and to take into account other physical, chemical or biological factors. The first reaction-diffusion 
models of the form \eqref{degRD} considered constant diffusion coefficients $D \equiv D_0 > 0$ (cf. 
\cite{MurI3ed,Ske51}). It is now clear, however, that there are situations in which the diffusion coefficient 
must be a function of the unknown $u$. For example, in the context of population biology it has been reported 
(cf. \cite{Aron80,GuMaC77,MyKr74}) that motility varies with the population density, requiring 
density-dependent dispersal coefficients. Such feature has been incorporated into mathematical models in 
spatial ecology \cite{SKT79}, and eukaryotic cell biology \cite{SePO07}, to mention a few. Density dependent 
diffusion functions seem to be particularly important in bacterial aggregation, where motility appears as an 
increasing function of bacterial density as well as of nutrients or other substrate substances (cf. 
\cite{B-JCoLev00,KMMUS,LMP1}). A significant example from a different field is the equation $u_t = \Delta 
(u^m)$, with $m > 0$, which is very well-known in chemical engineering for the description of porous media 
\cite{Muskat37}. 

An interesting phenomenon occurs when the nonlinearity in the diffusion coefficient is 
\textit{degenerate}, meaning that diffusion approaches zero when the density does also. Among the new 
mathematical features one finds that equations with degenerate diffusion possess finite speed of propagation 
of initial disturbances, in contrast with the strictly parabolic case. Another property is the emergence of 
traveling waves of ``sharp'' type (cf. \cite{SaMaKa96b,Sh10}). Reaction-diffusion 
models with degenerate nonlinear diffusion are widely used nowadays to describe biological phenomena 
(see, for example, \cite{GuMaC77,MurI3ed} and the references therein). 

In all these models, one of the most important mathematical solution types is the traveling front. Traveling fronts (or wave fronts) are solutions to equations \eqref{degRD}, of the form
\[
u(x,t) = \varphi(x - ct),
\]
where $c \in \R$ is the speed of the wave and $\varphi : \R \to \R$ is the wave profile function. For pattern 
formation problems it is natural to consider infinite domains and to neglect the influence of boundary 
conditions. Thus, these fronts usually have asymptotic limits, $u_\pm = \lim_{\xi \to \pm \infty} 
\varphi(\xi)$, which are equilibrium points of the reaction function under consideration, $f(u_\pm) = 0$. They 
are widely used to model, for example, invasions in theoretical ecology \cite{Hast-etal-05}, the advancing 
edges of cell populations like growing tumors \cite{ShCh}, or the envelope fronts of certain bacterial 
colonies which extend effectively as one-dimensional fronts \cite{KMMUS,LMP1}. Since the classical work of 
Kolmogorov \textit{et al.} \cite{KPP37}, which set the foundations of their existence theory, traveling waves 
solutions to equations of the form \eqref{degRD} have attracted a great deal of attention. The existence of 
fronts for reaction-diffusion equations with degenerate diffusion was first studied in particular cases (see, 
e.g., \cite{Aron85,New80,New83}). The first general existence results for degenerate diffusions satisfying 
hypotheses \eqref{hypD}, and generic reaction functions of Fisher-KPP type satisfying \eqref{Fisherf}, are due 
to Sanchez-Gardu\~{n}o and Maini \cite{SaMa94a,SaMa95}. In these works, the authors prove the existence of a 
positive threshold speed $c_* > 0$ such that: (i) there exist no traveling fronts with speed $0 < c < c_*$; 
(ii) there exists a traveling wave of \textit{sharp} type traveling with speed $c = c_*$, with 
$\varphi(-\infty) = 1$ and $\varphi(\xi) = 0$ for all $\xi \geq \xi_*$ with some $\xi_* \in \R$; and, (iii) 
there exists a family of smooth monotone decreasing traveling fronts, each of which travels with speed $c > 
c_*$, and is such that $\varphi(-\infty) = 1$ and $\varphi(+\infty) = 0$ (see Proposition \ref{propSaMa} 
below). The present paper pertains to the stability properties of monotone Fisher-KPP degenerate fronts (case 
(iii) above).


The stability of traveling wave solutions is a fundamental issue. It has been addressed for strictly parabolic reaction-diffusion equations using methods that range from comparison principles for super- and sub-solutions (see, e.g., the pioneer work of Fife and McLeod \cite{FiM77}), linearization techniques and generation of stable semigroups \cite{Sat}, and dynamical systems techniques for PDEs \cite{He81}, among others. The modern stability theory of nonlinear waves links the functional analysis approach with dynamical systems techniques, by setting a program leading to the spectral stability properties (the analysis of the spectrum of the linearized differential operator around the wave) and their relation to the nonlinear (orbital) stability of the waves under the dynamical viewpoint of the equations of evolution. The reader is referred to the seminal paper by Alexander, Gardner and Jones \cite{AGJ90}, the review article by Sandstede \cite{San02}, and the recent book by Kapitula and Promislow \cite{KaPro13} for further information. A recent contribution by Meyries \textit{et al.} \cite{MRS14} follows the same methodology, and presents rigorous results for quasi-linear systems with density-dependent diffusion tensors which are strictly parabolic (non-degenerate). 

In this paper we take a further step in this general stability program by considering degenerate diffusion 
coefficients. We start with the study of scalar equations and specialize the analysis to the spectral 
stability of the fronts. The former property, formally defined as the absence of spectra with positive real 
part of the linearized differential operator around the wave (see Definition \ref{defspecstab} below), can be 
seen as a first step of the general program. The degeneracy of the diffusion coefficient in one of 
the asymptotic limits poses some technical difficulties which are not present in the standard parabolic case. 
As far as we know, this is the first time that the spectral stability of a degenerate front is addressed in 
the literature. The contributions of this paper can be summarized as follows:
\begin{itemize}
\item[-] Due to the degeneracy of the diffusion at one of the equilibrium points of the reaction, the hyperbolicity 
of the asymptotic coefficients at one of the end points, which arise when the spectral problem is written in 
first order form, is lost. 
This precludes a direct application of the standard methods to locate the essential spectrum of the 
linearized 
operator around the front (cf. \cite{KaPro13,MRS14,San02}). To circumvent this difficulty, we propose an 
equivalent 
(but \textit{ad hoc}) partition of the spectrum of the linearized operator in the form $\sigma = \ptsp 
\cup \sigma_\delta \cup \sigma_\pi$, where $\ptsp$ is the point spectrum, $\sigma_\pi$ is a subset of the approximate spectrum, and 
$\sigma_\delta$ is a subset of the compression spectrum (see Definition \ref{defspec} below for
details).
\item[-] With the use of energy estimates of solutions to spectral equations, we control the point spectrum. For that purpose, we introduce a suitable 
transformation that gets rid of some of the advection terms, allowing us to close the energy estimate and to locate the eigenvalues along the non-positive real line. A detailed analysis (included in the Appendix \ref{secsoldecay}) of the 
decay properties of the associated eigenfunctions is necessary to justify such transformation. 
\item[-]  In order to locate the subset $\sigma_\delta$ of the compression spectrum, we introduce a regularization 
technique which circumvents the degeneracy of the diffusion at one of the equilibrium points. It is shown that the family of regularized operators converge, in the generalized sense, to the degenerate operator as the regularization parameter tends to zero. This convergence allows us, in 
turn, to relate the standard Fredholm properties of the regularized operators to those of the original degenerate operator. 
\item[-] As it is known from the parabolic Fisher-KPP case \cite{He81,KaPro13}, the spectral stability of fronts holds 
in exponentially weighted spaces only. We thus show that one can choose an appropriate weighted $L^2$-space 
(provided that a certain condition on the speed holds) in which degenerate-diffusion fronts are spectrally 
stable. To that end, we profit from the invariance of the point spectrum under conjugation, from the 
particular technique to locate $\sigma_\delta$ based on its Fredholm borders, and from the particular choice of the exponential weight, which allows us to control the subset $\sigma_\pi$ of the approximate spectrum. 
\end{itemize}

\smallskip

The main result of this paper is the content of the following
\begin{theorem}
\label{mainthm}
For any monotone traveling front for Fisher-KPP reaction diffusion-degenerate equations \eqref{degRD}, under hypotheses \eqref{hypD} and \eqref{Fisherf}, and
traveling with speed $c \in \R$ satisfying the condition
\[
c > \max \Big\{ \, c_*, \, \,\frac{f'(0) \sqrt{D(1)}}{\sqrt{f'(0) - f'(1)}} \,\Big\} > 0, 
\]
there exists an exponentially weighted space $L_a^2(\R;\C) = \{ u \, : \, e^{ax}u(x) \in L^2(\R;\C)\}$, with $a \in \R$, such that the front is 
$L_a^2$-spectrally stable. Here $c_* > 0$ denotes the minimum threshold speed (the velocity of the sharp wave).
\end{theorem}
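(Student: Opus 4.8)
The plan is to pass to the co-moving frame, linearize around the front profile $\varphi$, and then decompose the spectrum of the resulting operator $\cL$ according to the \textit{ad hoc} partition $\sigma = \ptsp \cup \spd \cup \sppi$ announced in the introduction, controlling each piece separately and, crucially, after conjugation by an exponential weight $e^{ax}$ with $a$ chosen in terms of $c$, $f'(0)$, $f'(1)$ and $D(1)$. First I would rewrite \eqref{degRD} in the variable $\xi = x - ct$, so that the front becomes a stationary solution, and compute the linearized operator $\cL u = (D(\varphi)u')' + (D'(\varphi)\varphi' u)' + c u' + f'(\varphi)u$ acting on an appropriate domain in $L^2(\R)$. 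Conjugating by the weight, $\cL_a \defin e^{a\xi}\cL e^{-a\xi}$, produces a new operator whose asymptotic coefficients at $\xi \to \pm\infty$ are shifted; the point of the weight is precisely to push the Fredholm borders (and the essential/approximate spectrum coming from the $\xi \to +\infty$, degenerate end) into the open left half plane, while not destroying the good behaviour at $\xi \to -\infty$, where $D(\varphi) \to D(1) > 0$. The condition $c > f'(0)\sqrt{D(1)}/\sqrt{f'(0)-f'(1)}$ is exactly what guarantees that such an $a$ exists: it is the standard Fisher-KPP-type constraint ensuring that the two exponential decay rates at the unstable end can be straddled by a single weight that simultaneously renders the linearization of the reaction stable there.

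Next I would treat the three spectral pieces in turn, invoking the results developed earlier in the paper. For $\sppi$, the subset of the approximate spectrum: because the conjugated operator has (shifted) constant-coefficient asymptotic form at $\xi \to +\infty$, a direct Fourier/symbol computation shows that the approximate spectrum associated with that end lies strictly in $\{\Re\lambda < 0\}$ once $a$ is in the admissible range; here I would use the explicit form of the asymptotic symbol and the speed condition. For $\spd$, the subset of the compression spectrum: I would invoke the parabolic regularization technique, replacing $D(\varphi)$ by $D(\varphi) + \ep$ to restore uniform parabolicity at $+\infty$, note that the regularized operators $\cL_a^\ep$ are standard and have computable Fredholm borders, and then use the generalized-sense convergence $\cL_a^\ep \to \cL_a$ as $\ep \to 0^+$ together with the stability of the Fredholm index under such convergence to conclude that $\spd$ sits in the closed left half plane, in fact to the left of the regularized Fredholm curves, which themselves lie in $\{\Re\lambda \le 0\}$ by the speed condition and the choice of $a$. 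For $\ptsp$, the point spectrum: I would use the energy-estimate argument, applying the change of variables that eliminates the residual advection terms in the spectral equation $\cL_a u = \lambda u$, multiplying by $\bar u$ (or by the transformed eigenfunction), integrating by parts, and reading off $\Re\lambda \le 0$ from the sign of the resulting quadratic form. This integration by parts is only legitimate if the eigenfunctions and their relevant derivatives decay at $+\infty$; that decay is supplied by the detailed asymptotic analysis in Appendix \ref{secsoldecay}, and I would cite it to discard boundary terms. The invariance of $\ptsp$ under conjugation lets me transfer this back if needed.

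Combining the three inclusions gives $\sigma(\cL_a) \subset \{\Re\lambda \le 0\}$, which is exactly $L_a^2$-spectral stability in the sense of Definition \ref{defspecstab}, and the theorem follows once I check that the condition $c > \max\{c_*, f'(0)\sqrt{D(1)}/\sqrt{f'(0)-f'(1)}\}$ is consistent — the $c > c_*$ part is what Proposition \ref{propSaMa} requires for a smooth monotone front to exist at all, and the second term is what the weighted analysis demands. The main obstacle, and the step I expect to require the most care, is the treatment of $\spd$ via regularization: establishing genuine generalized-sense (norm-resolvent-type) convergence of $\cL_a^\ep$ to the \emph{degenerate} operator $\cL_a$, and verifying that Fredholmness and index are actually preserved in the limit despite the loss of ellipticity at $\xi = +\infty$, is delicate because the standard Weyl-sequence and exponential-dichotomy machinery does not apply directly at the degenerate end; this is where the novelty of the paper lies and where I would expect to spend the bulk of the technical effort, with the energy estimates for $\ptsp$ a close second because of the need to justify discarding boundary terms using the sharp decay rates from the appendix.
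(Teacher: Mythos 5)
Your proposal follows essentially the same route as the paper: the same three-way decomposition $\sigma = \ptsp \cup \spd \cup \sppi$, energy estimates plus conjugation-invariance for the point spectrum (with the Appendix decay rates justifying the integrations by parts), parabolic regularization and generalized convergence with Kato's index-stability theorem for $\spd$, and the asymptotic constant-coefficient symbol computation for $\sppi$, with the speed condition arising exactly as the solvability condition $f'(0)/c < a < a_0(c)$ for the weight. The outline is correct and consistent with the paper's proof.
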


\begin{remark}
In the most general case, the existence theory of \cite{SaMa94a} does not provide a precise analytic expression for $c_* > 0$, the threshold speed of sharp waves. The explicit form of $c_*$, however, is known in some particular cases, for specific choices of the functions $D$ and $f$ (see, e.g., \cite{Aron80,GiKe04,SaMa94b}).
\end{remark}

There exist previous results on the stability of diffusion-degenerate fronts in the literature. In an early 
paper, Hosono \cite{Hos86} addresses the convergence to traveling fronts for reaction 
diffusion equations in the ``porous medium'' form, $u_t = (u^m)_{xx} + f(u)$, with $m > 0$ (that is, for 
$D(u) = mu^{m-1}$) and reaction function $f$ of Nagumo (or bistable type). His method is 
based on the construction of super- and sub-solutions to the parabolic problem and the use of the comparison 
principle. Hosono establishes the asymptotic convergence of solutions to the nonlinear equation to a 
translated front when the initial data is close to the stationary front profile. We observe that this particular  
diffusion coefficient does not satisfy assumptions \eqref{hypD}; in addition, the method of proof relies 
heavily on the particular properties of solutions to the porous medium equation. Hosono's paper, however, 
warrants note as the first work containing a rigorous proof of convergence to a traveling front for reaction 
diffusion-degenerate equations. For the Fisher-KPP case, Sherratt and Marchant \cite{ShMa96} numerically 
studied the convergence to traveling fronts of solutions with particular initial data in the case of diffusion 
given by $D(u) = u$. For degenerate diffusions of porous medium type and for their generalizations, respectively, Bir\'o \cite{Bir02} and Medvedev \textit{et al.} \cite{MOH03} employed similar techniques to those introduced by Hosono to show that solutions with compactly supported initial data evolve towards the 
Fisher-KPP degenerate front with minimum speed $c_* > 0$ (that is, towards the sharp-front wave). These 
results were extended by Kamin and Rosenau \cite{KaRo04a} to reaction functions of the form $f(u) = u(1-u^m)$, 
with the same porous medium type of diffusion, and with initial data decaying sufficiently fast. So far, no 
rigorous work on stability of smooth fronts is known.

Our paper differs from the aforementioned works in several ways. On one hand our study focuses on the property
of spectral stability, the first step in a general stability program. Consequently, our analysis makes no use 
of parabolic PDE techniques, pertains to the spectral theory of operators, and could be extrapolated to the 
systems case. On the other hand, we study the stability of the whole family of smooth fronts with speed $c > 
c_*$, unlike previous analyses which are restricted to the sharp wave case with $c = c_*$. In addition, we 
consider the generic class of degenerate diffusion coefficients satisfying \eqref{hypD} and introduced by 
S\'{a}nchez-Gardu\~{n}o and Maini. Finally, we conjecture that some of the ideas employed here at the spectral 
level, and designed to deal with the degeneracy of the diffusion, could be applied to more general situations 
(see section \ref{secdiscuss} below for a discussion about this point). This paper is, thus, more related 
in spirit to the work by Meyries \textit{et al.} \cite{MRS14}, by extending their agenda to the 
case of degenerate diffusions, and it is closer in methodology to the program initiated in \cite{AGJ90,KaPro13,San02}.

\subsection*{Plan of the paper} In section \ref{secstructure} we briefly review the existence theory of 
traveling fronts due to S\'anchez-Gardu\~no and Maini \cite{SaMa94a,SaMa94b}. We focus on the main 
structural properties of the waves, such as monotonicity, the lower bound of the speed, as well as its 
asymptotic behavior. In section \ref{secproblem} we pose the stability problem and introduce the partition of 
the spectrum suitable for our needs. Section \ref{secenergy} contains the energy estimates which allow us to 
control the point spectrum. Section \ref{secparab} contains the 
definition of the (parabolic) regularized operator and the proof of generalized convergence when the 
regularization parameter tends to zero. In addition, we compute the Fredholm boundaries for the 
regularized operators and link them to the location of the subset of the 
compression spectrum for the degenerate operator. Section \ref{secweighted} contains the proof of 
Theorem \ref{mainthm}, by choosing appropriate exponentially weighted spaces in which spectral stability 
does hold. In the last section \ref{secdiscuss}, we make some final remarks. Appendix \ref{secsoldecay} 
contains a detailed analysis of the decay of $L^2$-eigenfunctions, which is needed to justify the energy estimates of section \ref{secenergy}.

\section{Structure of Fisher-KPP diffusion-degenerate fronts}
\label{secstructure}

In this section we recall the traveling wave existence theory due to S\'{a}nchez-Gardu\~{n}o and Maini 
\cite{SaMa94a,SaMa95}. The latter is based on the analysis of the local and global phase portraits of the 
associated ODE system; another approach, which 
applies the Conley index to prove the existence of the waves, can be found 
in \cite{ElATa10a,SaMaKa96a}. (For existence results for a more general 
class of equations with advection terms, see Gilding and Kersner \cite{GiKe05}.) 

Let us suppose that $u(x,t) = \varphi(x-ct)$ is a traveling wave solution to \eqref{degRD} with speed $c \in 
\R$. Upon substitution, we find that the profile function $\varphi : \R \to \R$ must be a solution to the 
equation
\begin{equation}
\label{fronteq}
(D(\varphi) \varphi_\xi)_\xi + c \varphi_\xi + f(\varphi) = 0,
\end{equation}
where $\xi = x - ct$ denotes the translation (Galilean) variable. Let us denote the asymptotic limits of the 
traveling wave as
\[
u_\pm := \varphi(\pm \infty) = \lim_{\xi \to \pm \infty} \varphi(\xi).
\]
It is assumed that $u_+$ and $u_-$ are equilibrium points of the reaction function under consideration. 
Written as a first order system, equation \eqref{fronteq} is recast as
\begin{equation}
\label{origsys}
\begin{aligned}
\frac{d \varphi}{d\xi} &= v \\
D(\varphi) \frac{dv}{d\xi} &= -cv -D'(\varphi)v^2 - f(\varphi).
\end{aligned}
\end{equation}
Notice that, due to the degenerate diffusion, this system is degenerate at $\varphi = 0$. Aronson 
\cite{Aron80} overcomes the singularity by introducing the parameter $\tau = \tau(\xi)$, such that
\[
\frac{d\tau}{d \xi} = \frac{1}{D(\varphi(\xi))},
\]
and, therefore, the system is transformed into
\begin{equation}\label{odeSys}
\begin{aligned}
\frac{d\varphi}{d\tau} &= D(\varphi)v \\
 \frac{dv}{d\tau} &= -cv -D'(\varphi)v^2 - f(\varphi).
\end{aligned}
\end{equation}

Heteroclinic trajectories of both systems are equivalent, so the analysis focuses on the study of the 
topological properties of equilibria for system \eqref{odeSys}, which depend upon the reaction function $f$. 
The existence of 
monotone traveling wave solutions is summarized in the following

\begin{proposition}[monotone degenerate Fisher-KPP fronts \cite{SaMa94a}]\label{propSaMa}
If the function $D = D(u)$ satisfies \eqref{hypD} and $f = f(u)$ is of Fisher-KPP type satisfying 
\eqref{Fisherf}, then there exists a unique speed value $c_* > 0$ such that equation \eqref{degRD} has a
monotone decreasing traveling front for each $c > c_*$, with
\[
u_+ = \varphi(+\infty) = 0, \qquad u_- = \varphi(-\infty) = 1,
\]
and $\varphi_\xi < 0$ for all $\xi \in \R$. Each front is diffusion degenerate at $u_+ = 0$, as $\xi \to 
+\infty$.
\end{proposition}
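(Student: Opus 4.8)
The plan is to analyze the desingularized phase plane \eqref{odeSys}, following Aronson \cite{Aron80} and S\'anchez-Gardu\~no--Maini \cite{SaMa94a}. First I would locate the equilibria: since $D(\varphi)=0$ on $[0,1]$ only at $\varphi=0$, the equation $\dot\varphi=D(\varphi)v=0$ forces $v=0$ or $\varphi=0$, and imposing $\dot v=0$ then yields exactly three equilibria when $c\neq 0$ --- the end states $P_-=(1,0)$ and $P_0=(0,0)$, and an extra point $P_*=(0,-c/D'(0))$ on the (invariant) degenerate line $\{\varphi=0\}$. Linearization shows $P_-$ is a saddle: its Jacobian $\left(\begin{smallmatrix} 0 & D(1)\\ -f'(1) & -c\end{smallmatrix}\right)$ has determinant $D(1)f'(1)<0$ by \eqref{Fisherf}; likewise $P_*$ is a saddle, its Jacobian being lower-triangular with eigenvalues $\pm c$; whereas at $P_0$ the Jacobian $\left(\begin{smallmatrix} 0 & 0\\ -f'(0) & -c\end{smallmatrix}\right)$ has eigenvalues $0$ and $-c$, so $P_0$ is non-hyperbolic. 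The candidate front is the branch of the one-dimensional unstable manifold $W^u(P_-)$ that enters the region $R=\{\,0<\varphi<1,\ v<0\,\}$.

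Second, I would show that $R$ is positively invariant and analyze the dynamics in it. On $\{v=0,\ 0<\varphi<1\}$ one has $\dot v=-f(\varphi)<0$; on $\{\varphi=1,\ v<0\}$ one has $\dot\varphi=D(1)v<0$; and $\{\varphi=0\}$ is invariant because $\dot\varphi=D(0)v=0$. Hence orbits do not leave $R$, and inside $R$ the coordinate $\varphi$ is strictly decreasing, so it converges; thus, provided the orbit stays bounded (which one secures by a standard a priori estimate confining it, with the aid of the nullcline $\{\dot v=0\}$ passing through $P_*$, to a compact part of $\overline{R}$), it must limit to an equilibrium in $\overline{R}$, that is, to $P_0$ or $P_*$. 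To show $P_0$ is actually reachable and to tell the two cases apart, I would use center-manifold reduction at $P_0$: the center direction is $(c,-f'(0))$, and on the one-dimensional center manifold $v=h(\varphi)$ one computes $h(\varphi)=-(f'(0)/c)\varphi+O(\varphi^2)$, so the reduced flow $\dot\varphi=D(\varphi)h(\varphi)=-(D'(0)f'(0)/c)\varphi^2+O(\varphi^3)$ is negative for small $\varphi>0$; hence $P_0$ attracts trajectories from inside $R$, which approach it tangent to the center direction.

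Third, I would run a shooting argument in the speed $c$. Writing the orbit of $W^u(P_-)$ as a graph $v=\psi_c(\varphi)$ solving $dv/d\varphi=\big(-cv-D'(\varphi)v^2-f(\varphi)\big)/\big(D(\varphi)v\big)$ with $\psi_c(1)=0$, a comparison argument (valid since $v<0$ in $R$) gives monotone dependence of $\psi_c$ on $c$. As $c$ varies the orbit passes through three regimes: for $c$ small it crosses $W^s(P_*)$ and overshoots (the profile reaches $\varphi=0$ at finite $\xi$ with $\varphi_\xi\to-\infty$, not a front); for $c$ large it is funnelled to $P_0$; monotonicity then produces a single threshold $c_*$, at which $W^u(P_-)=W^s(P_*)$ (the sharp wave), with connections to $P_0$ exactly for $c>c_*$. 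One checks $c_*<\infty$ from the large-$c$ behavior, and $c_*>0$ both from the small-$c$ analysis and from the a priori identity $c=\int_\R f(\varphi)\,d\xi>0$ obtained by integrating \eqref{fronteq} along any front. Finally, for $c>c_*$ the heteroclinic $P_-\to P_0$ lies in $R$, hence has $v<0$ throughout; reverting to $\xi$ via $d\xi/d\tau=D(\varphi)$ and noting that along the center manifold $D(\varphi)\sim (c/f'(0))\,\tau^{-1}$, so that $\xi\to+\infty$, this yields a profile with $\varphi(-\infty)=1$, $\varphi(+\infty)=0$, $\varphi_\xi<0$ on $\R$, and $D(\varphi(\xi))\to 0$ as $\xi\to+\infty$; uniqueness up to translation is immediate since $W^u(P_-)$ is a single orbit.

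The main obstacle, and the feature that distinguishes this from the classical non-degenerate Fisher-KPP analysis, is the non-hyperbolic equilibrium $P_0$ created by the degeneracy $D(0)=0$. Because of the zero eigenvalue one cannot simply invoke a stable node; instead center-manifold theory is needed to guarantee that a connection to $P_0$ exists, to separate it from the connection to $P_*$ (the sharp front), and to verify that the connection is reached as $\xi\to+\infty$ rather than at a finite value of $\xi$. The accompanying technical work is the a priori control of the orbit of $W^u(P_-)$ that legitimizes the shooting and pins down $c_*$ as a finite, strictly positive threshold.
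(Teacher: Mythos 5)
The paper offers no proof of this proposition: it is imported verbatim from S\'anchez-Gardu\~no and Maini \cite{SaMa94a}, whose argument proceeds exactly as you outline --- Aronson's desingularization, local analysis of the three equilibria $P_-$, $P_0$, $P_*$ including the center-manifold reduction at the non-hyperbolic point $P_0$, and a shooting/monotonicity argument in $c$ identifying $c_*$ as the speed at which $W^u(P_-)$ coincides with $W^s(P_*)$. Your sketch is a faithful outline of that approach (and is consistent with the center-manifold expansion the paper itself reuses in the proof of Lemma \ref{lemdecayFd}), so it is essentially the same route.
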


Notice that this is an infinite family of fronts parametrized by the speed $c > c_*$ connecting the 
equilibrium points $u_+ =0$ and $u_-=1$. The fronts are diffusion degenerate in the sense that the diffusion 
coefficient vanishes at one of the equilibrium points, in this case, at $u_+= 0$. 

\begin{remark}
Theorem 2 in \cite{SaMa94a} guarantees the absence of traveling wave solutions when $0 < c < c_*$, as well as 
the 
existence of traveling waves of ``sharp'' type when $c = c_*$. The latter are not considered in the present 
analysis. See \cite{SaMa95,SaMaKa96b} for further information.
\end{remark}

As a by-product of the analysis in \cite{SaMa94a}, one can explicitly determine the asymptotic behavior of 
the traveling fronts as $\xi \to \pm \infty$. This information will be useful later on. 

\begin{lemma}[asymptotic decay]
\label{lemdecayFd}
Let $\varphi = \varphi(\xi)$ a monotone decreasing Fisher-KPP degenerate front, traveling with speed $c > c_* 
> 0$, and with $u_+ =0$, $u_- = 1$. Then $\varphi$ behaves asymptotically as
\[
|\partial_\xi^j(\varphi - u_+)| = |\partial_\xi^j \varphi | = O(e^{-f'(0)\xi/c}), \quad \text{as } \; \xi \to 
+\infty, \; j=0,1,
\]
on the degenerate side; and as,
\[
|\partial_\xi^j(\varphi - u_-)| = |\partial_\xi^j(\varphi - 1)| = O(e^{\eta \xi}), \quad \text{as } \; \xi \to 
-\infty, \; j=0,1,
\]
on the non-degenerate side, with $\eta = (2D(1))^{-1} (-c + \sqrt{c^2 - 4D(1)f'(1)}) > 0$.
\end{lemma}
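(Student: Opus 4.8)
The plan is to linearize the traveling--front equation \eqref{fronteq} about each of its two asymptotic rest states and read off the decay rates from the spectra of the linearizations, the only real subtlety being the degeneracy of the associated phase--plane system at $u_+=0$.

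\emph{The non--degenerate side.} Here I would work with the first--order system \eqref{origsys} near the rest point $(\varphi,v)=(1,0)$. Since $D(1)>0$ by \eqref{hypD}, dividing the second equation by $D(\varphi)$ and computing the Jacobian at $(1,0)$ produces the matrix
\[
\begin{pmatrix} 0 & 1 \\ -f'(1)/D(1) & -c/D(1) \end{pmatrix},
\]
with eigenvalues $\tfrac{1}{2D(1)}\bigl(-c\pm\sqrt{c^{2}-4D(1)f'(1)}\bigr)$. Because $f'(1)<0$ by \eqref{Fisherf}, the radicand exceeds $c^{2}$, so one eigenvalue is the stated $\eta>0$ and the other is negative; $(1,0)$ is thus a hyperbolic saddle. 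By Proposition~\ref{propSaMa} the front is a bounded orbit converging to $(1,0)$ as $\xi\to-\infty$ with $\varphi_\xi<0$, hence it lies on the one--dimensional unstable manifold of $(1,0)$, which is tangent there to the eigenvector $(1,\eta)^{\top}$ of $\eta$. The standard asymptotic--phase estimate for orbits on an unstable manifold then gives $\varphi(\xi)-1=O(e^{\eta\xi})$ and $\varphi_\xi(\xi)=v(\xi)=\eta\,(\varphi(\xi)-1)\bigl(1+o(1)\bigr)=O(e^{\eta\xi})$ as $\xi\to-\infty$, which is the claim for $j=0,1$.

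\emph{The degenerate side.} For $c>c_*$ the front corresponds to the heteroclinic orbit of Aronson's desingularized system \eqref{odeSys} joining $(1,0)$ to $(0,0)$, as follows from the phase--plane analysis in \cite{SaMa94a} underlying Proposition~\ref{propSaMa}; in particular $\varphi\to0^{+}$ and $v=\varphi_\xi\to0^{-}$ as $\xi\to+\infty$. Exploiting strict monotonicity, I would regard $v=\varphi_\xi$ as a function $V(\varphi)$ of $\varphi\in(0,1)$; from \eqref{fronteq} it satisfies the first--order equation
\[
D(\varphi)\,V\,\frac{dV}{d\varphi}+D'(\varphi)V^{2}+cV+f(\varphi)=0,\qquad V<0,\quad V(0^{+})=0 .
\]
Writing $V=\varphi\,w$ and using $D(\varphi)=D'(0)\varphi+O(\varphi^{2})$ and $f(\varphi)=f'(0)\varphi+O(\varphi^{2})$, the equation for $w(\varphi)$ becomes, as $\varphi\to0$, a perturbation (by terms of order $\varphi$ and $\varphi^{2}\,dw/d\varphi$) of the algebraic relation $cw+f'(0)=0$. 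An invariant--region (or contraction) argument near $\varphi=0$ then shows $w(\varphi)\to-f'(0)/c$, and in fact $V(\varphi)=-\tfrac{f'(0)}{c}\,\varphi+O(\varphi^{2})$. Returning to the variable $\xi$, where $\varphi_\xi=V(\varphi)$, this yields $\tfrac{d}{d\xi}\ln\varphi=-f'(0)/c+O(\varphi)$; since a crude bootstrap already gives exponential decay of $\varphi$, the remainder is integrable in $\xi$, and integrating produces $\ln\varphi=-(f'(0)/c)\,\xi+\mathrm{const}+o(1)$, i.e.\ $\varphi(\xi)=O(e^{-f'(0)\xi/c})$ and then $\varphi_\xi=V(\varphi)=O(\varphi)=O(e^{-f'(0)\xi/c})$. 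Equivalently, one may run a center--manifold reduction for \eqref{odeSys} at $(0,0)$, whose Jacobian is $\bigl(\begin{smallmatrix}0&0\\-f'(0)&-c\end{smallmatrix}\bigr)$, with eigenvalues $0$ and $-c$ and center manifold tangent to $(c,-f'(0))^{\top}$, obtaining $\varphi_\xi=v=-\tfrac{f'(0)}{c}\varphi+O(\varphi^{2})$ on the front and integrating as before.

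\emph{Main obstacle.} The work is concentrated entirely on the degenerate side. Because the rest state at $u_+=0$ fails to be hyperbolic, the stable/unstable--manifold theorem does not apply directly; one must instead control the singular ODE for $V(\varphi)$ (equivalently, the center--manifold dynamics of \eqref{odeSys}) near $\varphi=0$, and then convert the slow, algebraic--in--$\tau$ behaviour on the center manifold into the exponential rate $e^{-f'(0)\xi/c}$ in the wave variable $\xi$. Pinning down both the leading coefficient $-f'(0)/c$ and an \emph{integrable} error term in $V(\varphi)$ is the step that requires genuine care; everything else is routine phase--plane analysis, and the analogous but easier computation at the hyperbolic saddle $(1,0)$ disposes of the non--degenerate side.
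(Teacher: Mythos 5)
Your proposal is correct and follows essentially the same route as the paper: a hyperbolic linearization of the phase--plane system at $(1,0)$ giving the rate $\eta$ on the non--degenerate side, and a center--manifold (equivalently, $v=V(\varphi)$) reduction of Aronson's desingularized system at the non--hyperbolic point $(0,0)$ giving $\varphi_\xi=-\tfrac{f'(0)}{c}\varphi+O(\varphi^2)$, which integrates to the rate $e^{-f'(0)\xi/c}$. Your bootstrap-plus-integrable-remainder step on the degenerate side is in fact slightly more careful than the paper's ``$\varphi_\xi=h(\varphi)\approx-\tfrac{f'(0)}{c}\varphi$'' shortcut, but the substance is identical.
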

\begin{proof}
To verify the exponential decay on the the non-degenerate side as $\xi \to -\infty$, note that the end point 
$u_- = 1$ is not diffusion-degenerate and, therefore, the
equilibrium point $P_1 = (1,0)$ for system \eqref{origsys} is hyperbolic. Indeed, writing \eqref{origsys} as 
$\partial_\xi (\varphi, v) = F(\varphi,v)$, then the linearization around $P_1$ is given by
\[
DF_{|(1,0)} = \begin{pmatrix}
0 & 1 \\ -f'(1)/D(1) & -c/D(1)
\end{pmatrix},
\]
with positive eigenvalue $\eta = (2D(1))^{-1} (-c + \sqrt{c^2 - 4D(1)f'(1)}) > 0$. The exponential decay as 
$\xi \to -\infty$ follows by standard ODE estimates around a hyperbolic rest point.

To verify the exponential decay on the degenerate side, notice that $P_0 = (0,0)$ is a non-hyperbolic point 
for system \eqref{odeSys} for all admissible values of the speed $c > c_*$, and we need higher order terms to 
approximate the trajectory along a center manifold. Let us denote the latter as $v = h(\varphi)$; after an 
application of the center manifold theorem, we find that $P_0$ is locally a saddle-node, and the center 
manifold has the form
\[
h(\varphi) = - \frac{f'(0)}{c} \varphi - \frac{1}{2c^3}\big( f''(0) c^2 + 4 D'(0) f'(0)^2 \big) \varphi^2 + 
O(\varphi^3),
\]
for $\varphi \sim 0$ (see \cite{SaMa94a} for details). The trajectory leaves the saddle-node along the center 
manifold for $\varphi \sim 0$. Therefore, for $\xi \to +\infty$, the trajectory behaves as
\[
\varphi_\xi = h(\varphi) \approx - \frac{f'(0)}{c} \varphi \leq 0,
\]
yielding
\[
\varphi = O(e^{-f'(0)\xi/c}), \quad \text{as } \; \xi \to +\infty.
\]
This proves the result.
\end{proof}

\begin{remark}
\label{remH2}
We finish this section by observing that, due to their asymptotic decay, the monotone fronts satisfy 
$\varphi_\xi \in L^2(\R)$ ($\varphi_\xi \to 0$ as $\xi \to \pm \infty$ fast enough). Upon substitution into 
system \eqref{origsys} and by a bootstrapping argument, it can be verified that $\varphi_\xi \in H^2(\R)$ for 
all monotone fronts under consideration. We omit the details.
\end{remark}

\section{The stability problem}
\label{secproblem}

\subsection{Perturbation equations}

Suppose that $v(x,t) = \varphi(x-ct)$ is a monotone traveling front solution to the diffusion degenerate 
Fisher-KPP equation \eqref{degRD}, traveling with speed $c > c_* > 0$. With a slight abuse of notation we 
make the change of variables 
$x \to x-ct$, where now $x$ denotes the Galilean variable of translation. We shall keep this notation for 
the rest of the paper. In the new coordinates, $v$ is a solution to the equation,
\begin{equation}
\label{newRD}
v_t = ( D(v) v_x)_x + cv_x + f(v),
\end{equation}
for which traveling fronts are stationary solutions, $v(x,t) = \varphi(x)$, satisfying,
\begin{equation}\label{profileq}
(D(\varphi)\varphi_x)_x+c\varphi_x+f(\varphi)=0.
\end{equation}
The front connects asymptotic equilibrium points of the reaction: $\varphi(x) \to u_{\pm}$ as $x 
\to \pm \infty$, where $u_+ := 0$, $u_- := 1$, and the front is monotone decreasing $\varphi_x < 0$. 

Let us consider solutions to \eqref{newRD} of the form $\varphi(x)+u(x,t)$, where now $u$ denotes a 
perturbation. Substituting we obtain the nonlinear perturbation equation
\[  u_t = (D(\varphi+u)(\varphi+u)_x)_x + cu_x+ c\varphi_x +f(u+\varphi). \]
Linearizing around the front and using the profile equation \eqref{profileq} we get
\begin{equation}\label{lineareq}
u_t = (D(\varphi)u)_{xx} +cu_x+f'(\varphi)u.
\end{equation}
The right hand side of equation \eqref{lineareq}, regarded as a linear operator acting on an appropriate 
Banach space $X$, naturally defines the spectral problem 
\begin{equation}\label{spectralp}
\lambda u = \mathcal{L}u,
\end{equation}
where $\lambda \in \mathbb{C}$ is the spectral parameter, and
\begin{equation}\label{opL}
\begin{aligned}
\mathcal{L}&: \mathcal{D}(\mathcal{L}) \subset X \to X, \\
\mathcal{L}u &=(D(\varphi)u)_{xx} +cu_x+f'(\varphi)u,
\end{aligned}
\end{equation}
is the linearized operator around the wave, acting on $X$ with domain $\cD(\cL) \subset X$.

Formally, a necessary condition for the front to be stable is that there are no solutions $u \in X$ to 
equation 
\eqref{spectralp} for $\Re \lambda > 0$, precluding the existence of solutions to 
the linear equation \eqref{lineareq} of the form $e^{\lambda t}u$ that grow exponentially in time. This 
condition is known as \textit{spectral stability}, which we define rigorously below. 

\subsection{Resolvent and spectra}

We shall define a particular partition of spectrum suitable for our needs. Let $X$ and $Y$ be Banach spaces, 
and let $\ccC(X,Y)$ and $\ccB(X,Y)$ denote the sets of all closed and bounded linear operators from $X$ to 
$Y$, respectively. For any $\cL \in \ccC(X,Y)$ we denote its domain as $\cD(\cL) \subseteq X$ and its range 
as $\cR(\cL) := \cL (\cD(\cL)) \subseteq Y$. We say $\cL$ is densely defined if $\overline{\cD(\cL)} = X$.

\begin{definition}
 \label{defspec}
Let $\cL \in \ccC(X,Y)$ be a closed, densely defined operator. Its \textit{resolvent} 
$\rho(\cL)$ is defined as the set of all complex numbers $\lambda \in \C$ such that $\cL - \lambda$ is 
injective, $\cR(\cL - \lambda) = Y$, and $(\cL - \lambda)^{-1}$ is bounded. Its \textit{spectrum} is defined 
as $\sigma(\cL) := \C \backslash \rho(\cL)$. Furthermore, we also define the following subsets of the complex 
plane:
\[
 \begin{aligned}
 \ptsp(\cL) := &\{ \lambda \in \C \, : \, \cL - \lambda \; \text{is not injective}\},\\ 
 \spd(\cL) := &\{ \lambda \in \C \, : \, \cL - \lambda \; \text{ is injective, } \cR(\cL - \lambda) \text{ is 
closed, and } \cR(\cL- \lambda) \neq Y\},\\
\sppi(\cL) := &\{ \lambda \in \C \, : \, \cL - \lambda \; \text{ is injective, and } \cR(\cL - \lambda) 
\text{ is not closed}\}.
 \end{aligned}
\]
The set $\ptsp(\cL)$ is called the \textit{point spectrum} and its elements, \textit{eigenvalues}. Clearly, 
$\lambda \in \ptsp(\cL)$ if and only if there exists $u \in \cD(\cL)$, $u \neq 0$, such 
that $\cL u = \lambda u$.
\end{definition}

\begin{remark}
 First, notice that the sets $\ptsp(\cL)$, $\sppi(\cL)$ and $\spd(\cL)$ are clearly disjoint and, since $\cL$ 
is closed, that
\[
 \sigma(\cL) = \ptsp(\cL) \cup \sppi(\cL) \cup \spd(\cL).
\]
Indeed, in the general case it could happen that, for a certain $\lambda \in \C$, $\cL - \lambda$ is 
invertible but $(\cL - \lambda)^{-1}$ is not bounded. But this pathology never occurs if the operator is 
closed: for any $\cL \in \ccC(X,Y)$ with $\cR(\cL) = Y$, if $\cL$ is invertible then $\cL^{-1} \in \ccB(Y,X)$ 
(cf. \cite{Kat80}, pg. 167).
\end{remark}

\begin{remark}
\label{remappcom}
There are different partitions of the spectrum for an unbounded operator besides the classical definition of 
continuous, residual and point spectra (cf. \cite{EE87}). For instance, the set $\sppi(\cL)$ 
is contained in the \textit{approximate spectrum}, defined as
\[
\begin{aligned}
 \sppi(\cL) \subset {\sigma_\mathrm{\tiny{app}}}(\cL) := &\{ \lambda \in \C \, : \, \text{for each $n \in 
\N$ there exists $u_n \in \cD(\cL)$ with $\|u_n\| = 1$} \\ & \; \text{such that $(\cL - \lambda)u_n \to 0$ in 
$Y$ as $n \to +\infty$}\}.
\end{aligned}
\]
The inclusion follows from the fact that, for any $\lambda \in \sppi(\cL)$, the range of $\cL - \lambda$ is 
not closed and, therefore, there exists a \textit{singular sequence}, $u_n \in \cD(\cL)$, $\|u_n\| =1$ such 
that $(\cL - \lambda)u_n \to 0$, which contains no convergent subsequence (see Theorems 5.10 and 
5.11 in Kato \cite{Kat80}, pg. 233). We shall make use of this property of the set $\sppi$ later on (see the proof of Lemma \ref{lemelem} below). It is 
clear that $\ptsp(\cL) \subset {\sigma_\mathrm{\tiny{app}}}(\cL)$, as 
well.

The set $\spd(\cL)$ is clearly contained in what is often called the \textit{compression spectrum} 
\cite{ReRo04} (or \textit{surjective spectrum} \cite{VMue03}):
\[
 \spd(\cL) \subset {\sigma_\mathrm{\tiny{com}}}(\cL) := \{ \lambda \in \C \, : \, \cL - \lambda \; \text{ is 
injective, and } \overline{\cR(\cL-\lambda)} \neq Y\}.
\]
(Since $\overline{\cR(\cL-\lambda)} \neq Y$ it is said that the range has been compressed.)
\end{remark}

Our partition of spectrum splits the classical residual and continuous spectra into two disjoint components, 
making a distinction between points for which the range of $\cL - \lambda$ is closed and those for which it is 
not. These properties 
make this partition suitable for analyzing the stability of the spectrum of the diffusion-degenerate operator 
\eqref{opL}, as we shall see.

\begin{definition}
\label{defspecstab}
 We say that the traveling front $\varphi$ is $X$-\textit{spectrally stable} if 
 \[ 
 \sigma(\mathcal{L}) \subset \{ \lambda \in \C \, : \, \Re{\lambda} \leq 0\}.
  \]
\end{definition}

In this paper we shall consider $X = L^{2}(\mathbb{R};\mathbb{C})$ and $\mathcal{D}(\mathcal{L})= 
H^{2}(\mathbb{R};\mathbb{C})$, 
so that $\mathcal{L}$ is a closed, densely defined operator acting on $L^{2}$. In this fashion, the stability 
analysis of the operator $\cL$ pertains to \textit{localized} perturbations. In what follows, $\sigma(\cL)$ 
will denote the $L^2$-spectrum of the linearized operator \eqref{opL} with 
domain $\cD = H^2$, except where it is otherwise computed with respect to a space $X$ and explicitly denoted as $\sigma(\cL)_{|X}$.

We recall that $\lambda=0$ always belongs to the point spectrum (translation eigenvalue), inasmuch as
\[ 
\mathcal{L} \varphi_x = \partial_x \big( (D(\varphi)\varphi_x)_x+c\varphi_x+f(\varphi) \big)=0,
\]
in view of the profile equation \eqref{profileq} and the fact that $\varphi_x \in 
H^{2}(\mathbb{R}) = \cD(\cL)$. Thus, $\varphi_x$ is the eigenfunction associated to the eigenvalue 
$\lambda=0$.

Finally, we remind the reader that an operator $\mathcal{L} \in \ccC(X,Y)$ is said to be Fredholm if its 
range $\mathcal{R(L)}$ is closed, and both its nullity, $\nul\mathcal{L} = \dim \ker \mathcal{L}$, and 
its deficiency, $\mathrm{def} \,\mathcal{L} = \mathrm{codim} \, \mathcal{R(L)}$, are finite. $\cL$ is said to 
be semi-Fredholm if $\cR(\cL)$ is closed and at least one of $\nul \cL$ and $\mathrm{def} \, \cL$ is finite. 
In both cases the index of $\cL$ is defined as
$\ind \mathcal{L} = \nul \mathcal{L} - \mathrm{def} \, \mathcal{L}$ (cf. \cite{Kat80}).

\begin{remark}
For nonlinear wave stability purposes (cf. \cite{KaPro13}), the spectrum is often partitioned into essential, 
$\ess$, and point spectrum, 
${\widetilde{\sigma}_\mathrm{\tiny{pt}}}$, being the former the set of 
complex numbers $\lambda$ for which $\cL - \lambda$ is either not 
Fredholm or has index different 
from zero, whereas ${\widetilde{\sigma}_\mathrm{\tiny{pt}}}$ is defined as the set of complex numbers for 
which 
$\cL - \lambda$ is Fredholm with index zero and has a 
non-trivial kernel. Note that ${\widetilde{\sigma}_\mathrm{\tiny{pt}}} \subset \ptsp$. This 
definition is due to Weyl \cite{We10}, making $\ess$ a large set but easy to compute, whereas 
${\widetilde{\sigma}_\mathrm{\tiny{pt}}}$ is a discrete set of isolated eigenvalues (see Remark 2.2.4 in 
\cite{KaPro13}). In the present context with 
degenerate diffusion, however, this partition is not particularly useful due to the loss of hyperbolicity 
of the asymptotic coefficients of the operator.
\end{remark}

\section{Energy estimates and stability of the point spectrum}
\label{secenergy}

In this section we show that monotone diffusion-degenerate Fisher-KPP fronts are point spectrally stable. For that purpose we employ energy estimates and an appropriate change of variables. The monotonicity of the front plays a key role.

\subsection{The basic energy estimate}

Let $\varphi= \varphi(x)$ be a diffusion-degenerate monotone Fisher-KPP front and let $\mathcal{L}$ be the corresponding 
linearized operator around $\varphi$, acting on $L^2$. 
Take any fixed $\lambda \in \C$, and assume that there exists a solution $u\in 
\cD(\cL) = H^{2}(\R;\C)$ satisfying the spectral equation $(\cL - \lambda) u = 0$, which we write down as
\begin{equation}
\label{spectralp2}
(\mathcal{L} - \lambda)u= D(\varphi)u_{xx}+ (2 D(\varphi)_x + c) u_x + (D(\varphi)_{xx}+f'(\varphi) - 
\lambda)u = 0.\\
\end{equation}
Consider the change of variables
\begin{equation}
\label{changev}
u(x)= w(x) e^{\theta(x)},
\end{equation}
where $\theta =\theta(x)$ satisfies
\[ 
\theta_x= - \, \frac{c}{2D(\varphi)}, \qquad \text{for all } x\in \R.
\]
Upon substitution we arrive at the equation
\begin{equation}\label{eqw}
D(\varphi)w_{xx}+2 D(\varphi)_x w_x+ H(x)w - \lambda w = 0,
\end{equation}
where
\begin{equation*}
\begin{aligned}
H(x) &= D(\varphi) \theta_x^2 + D(\varphi) \theta_{xx} + (2 D(\varphi)_x + c) \theta_x + D(\varphi)_{xx} + 
f'(\varphi)\\
&= -\frac{c}{2} \frac{D(\varphi)_x}{D(\varphi)}-\frac{c^2}{4 D(\varphi)}+ D(\varphi)_{xx}+f'(\varphi).
\end{aligned}
\end{equation*}
Clearly, the function $\theta$ has the form
\begin{equation}\label{gamma}
\theta(x) = -\frac{c}{2} \int_{x_0}^{x}\frac{ds}{D(\varphi(s))} \, ,
\end{equation}
for any fixed $x_0 \in \R$, and it is well defined for all $x\in \R$. Note that $e^{-\theta(x)}$ may diverge 
as 
$x\to + \infty$. It is true, however, that $w \in H^2$ whenever $u \in H^2$.

\begin{lemma}
\label{lemgoodw}
If $u \in H^2(\R;\C)$ is a solution to the spectral equation $\cL u = \lambda u$, for some fixed $\lambda \in 
\C$, then 
\[
w(x) = \exp \left( \frac{c}{2} \int_{x_0}^x \frac{ds}{D(\varphi(s))}  \right) u(x),
\]
belongs to $H^2(\R;\C)$. Here $x_0 \in \R$ is fixed but arbitrary.
\end{lemma}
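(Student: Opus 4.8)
The natural approach is to localize: fix a large $R>0$ and treat $[-R,R]$, $(-\infty,-R]$ and $[R,\infty)$ in turn. On the bounded interval there is nothing to prove, since $0<\varphi(x)<1$ for every finite $x$ forces $D(\varphi)>0$ on $[-R,R]$, so that $\theta$, $\theta_x=-c/(2D(\varphi))$ and $\theta_{xx}=cD'(\varphi)\varphi_x/(2D(\varphi)^2)$ are continuous and bounded there, and $u\in H^2$ immediately yields $w,w_x,w_{xx}\in L^2([-R,R])$. On the non-degenerate tail $x\to-\infty$ the weight is harmless: $\varphi\to 1$ with $D(1)>0$, so $1/D(\varphi(s))$ is bounded and bounded away from zero for $s\ll 0$, whence $-\theta(x)=\frac{c}{2}\int_{x_0}^x \frac{ds}{D(\varphi(s))}\to-\infty$ at a linear rate, $e^{-\theta(x)}$ decays exponentially, and $\theta_x\to -c/(2D(1))$, $\theta_{xx}\to 0$ stay bounded. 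Since near $x=-\infty$ the spectral equation \eqref{spectralp2} has asymptotically constant coefficients with the hyperbolic characteristic polynomial of the rest point $P_1$ of Lemma \ref{lemdecayFd}, a standard first-order system argument shows that any $H^2$ solution $u$, together with $u_x$ and $u_{xx}$, decays exponentially as $x\to-\infty$; inserting these bounds into $w=e^{-\theta}u$, $w_x=e^{-\theta}(u_x-\theta_x u)$ and $w_{xx}=e^{-\theta}(u_{xx}-2\theta_x u_x+(\theta_x^2-\theta_{xx})u)$ shows $w,w_x,w_{xx}$ decay exponentially there, hence lie in $L^2((-\infty,-R])$.

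The real work is the degenerate tail $x\to+\infty$, where $D(\varphi)\to 0$, so $e^{-\theta(x)}\to+\infty$ (at a doubly exponential rate in $x$) and $\theta_x,\theta_{xx}$ blow up, though only like fixed negative powers of $D(\varphi)$. The point is that an $H^2$ solution $u$ of the spectral equation decays at $+\infty$ fast enough to beat the weight. Using Aronson's substitution $d\tau/d\xi=1/D(\varphi)$ one recasts \eqref{spectralp2} near $x=+\infty$ as an equation whose singular point carries a fundamental solution decaying like $e^{-c\tau}$, which in the $x$ variable decays faster than any exponential — and, decisively, faster than $e^{-\theta}=e^{c(\tau-\tau_0)/2}$ grows, since $e^{-c\tau}e^{c\tau/2}=e^{-c\tau/2}\to 0$. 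Feeding the resulting sharp asymptotics for $u$, $u_x$, $u_{xx}$ (the precise decay rates, which are exactly the content of Appendix \ref{secsoldecay}) into the displayed formulas for $w$, $w_x$, $w_{xx}$, one checks term by term — tracking the competing powers of $1/D(\varphi)$ coming from $\theta_x$, $\theta_x^2$, $\theta_{xx}$ against the decay of $u$ and the exponential smallness supplied by the surviving $e^{-\theta}$ factor — that $w$, $w_x$, $w_{xx}$ remain square integrable (indeed tend to $0$) on $[R,\infty)$. Concatenating the three pieces gives $w\in H^2(\R;\C)$.

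I expect the degenerate-side estimate to be the main obstacle: the qualitative information $u\in H^2$ (which only provides $u,u_x\to 0$ and global $L^2$ bounds) is far too weak against a weight diverging super-exponentially, so one genuinely needs the precise decay rates of the eigenfunction and of its first two derivatives as $x\to+\infty$. This forces the detailed asymptotic ODE analysis near the degenerate end, which is why that part is deferred to, and relies on, Appendix \ref{secsoldecay}. Once those decay estimates are available, the non-degenerate tail, the bounds on the compact middle, and the final gluing are routine.
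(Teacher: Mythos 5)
Your proposal is correct and follows essentially the same route as the paper: everything reduces to the degenerate tail, where one needs the sharp decay of the eigenfunction $u\sim e^{-c\tau}$ (in Aronson's variable $d\tau/dx = 1/D(\varphi)$), established in Lemma \ref{lemdecayu} via Liouville--Green asymptotics for $z_{xx}=F(x)z$, to beat the $e^{c\tau/2}$ growth of the weight --- exactly the cancellation $e^{-c\tau}e^{c\tau/2}=e^{-c\tau/2}$ you identify. One simplification worth noting: on the non-degenerate tail you need no decay of $u$ at all (and your appeal to exponential decay of $H^2$ solutions there is not justified uniformly in $\lambda$, e.g.\ for $\lambda$ on the Fredholm border $S_-$ the asymptotic matrix at $-\infty$ is not hyperbolic); since $D(\varphi)\geq \delta_0>0$ on $(-\infty,x_0]$, the weight satisfies $e^{-\theta}\leq 1$ with $\theta_x,\theta_{xx}$ bounded there, so multiplication by $e^{-\theta}$ trivially preserves $H^2(-\infty,x_0)$, which is all the paper uses.
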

\begin{proof}
See Appendix \ref{secsoldecay}.
\end{proof}

In view that $\varphi_x \in \ker \cL$, substitute $\lambda = 0$ into \eqref{eqw} to obtain
\begin{equation}
\label{eqpsi}
D(\varphi)\psi_{xx}+ 2D(\varphi)_x \psi_x+H(x)\psi =0,
\end{equation}
where $\psi := e^{-\theta}\varphi_x \in H^{2}$ (by Lemma \ref{lemgoodw}). Multiply equations \eqref{eqw} and 
\eqref{eqpsi} by 
$D(\varphi)$ and rearrange the terms appropriately; the result is
\begin{equation}
 \label{eqab}
 \begin{aligned}
  (D(\varphi)^{2}w_x)_x +D(\varphi)H(x)w - \lambda D(\varphi)w &= 0,\\
  (D(\varphi)^{2}\psi_x)_x +D(\varphi)H(x)\psi &= 0.
 \end{aligned}
\end{equation}

Since the front is monotone, $\varphi_x < 0$, we have that $\psi \neq 0$ for all $x \in \R$. Therefore, we 
substitute
\[ 
D(\varphi)H(x) = -\frac{(D(\varphi)^{2}\psi_x)_x}{\psi}
\]
into the first equation in \eqref{eqab} to obtain
\begin{equation}
\label{sltype}
(D(\varphi)^{2}w_x)_x -\frac{(D(\varphi)^{2}\psi_x)_x}{\psi}w - \lambda D(\varphi)w = 0.
\end{equation}
Take the complex $L^{2}$-product of $w$ with last equation, and integrate by parts. This yields, 
\begin{equation*}
\begin{aligned}
\lambda \int\limits_{\R} D(\varphi)|w|^{2} dx  &= \int\limits_{\R} 
\overline{w} 
(D(\varphi)^{2}w_x)_x dx - \int\limits_{\R} \psi^{-1} (D(\varphi)^{2}\psi_x)_x |w|^{2} dx\\
&= -\int\limits_{\R} D(\varphi)^{2}|w_x|^{2}dx + \int\limits_{\R} D(\varphi)^{2} \psi_x 
\left(\frac{|w|^{2}}{\psi} \right)_x dx \\
&= \int\limits_{\R} D(\varphi)^{2} \left( \psi_x \left(\frac{|w|^{2}}{\psi} \right)_x-|w_x|^{2} \right) dx,
\end{aligned}
\end{equation*}
where $\overline{w}$ denotes complex conjugate. Using the identity
\begin{equation*}
\psi^{2}\left|\left(\frac{w}{\psi} \right)_x \right|^{2} = - \left( \psi_x \left(\frac{|w|^{2}}{\psi} 
\right)_x-|w_x|^{2} \right),
\end{equation*}
and substituting, we obtain the estimate
\begin{equation}
\label{relaux}
\lambda \int\limits_{\R} D(\varphi)|w|^{2} dx  = -  
\int\limits_{\R} 
D(\varphi)^{2}\psi^{2}\left|\left(\frac{w}{\psi} \right)_x \right|^{2} dx.
\end{equation}

Let us denote, according to custom, the standard $L^2$-product as
\[
\langle u,v \rangle_{L^2} = \int_\R \overline{u} v \, dx, \qquad \|u\|_{L^2}^2 = \langle u,u \rangle_{L^2}.
\]
Hence, we can write relation \eqref{relaux} as
\[
\lambda \langle D(\varphi)w, w \rangle_{L^2} = - \| D(\varphi) \psi (w/\psi)_x \|_{L^2}^2.
\]
Notice that, thanks to Lemma \ref{lemgoodw}, $w \in H^2$, $\psi \in H^2$, and by monotonicity, $\psi < 0$, so 
that the $L^2$-products of last equation are well-defined.

We summarize these calculations into the following
\begin{proposition}[basic energy estimate]
For any $\lambda \in \C$, suppose that there exists a solution $u \in H^2(\R;\C)$ to 
the spectral equation $(\cL - \lambda) u = 0$. Then there holds the energy estimate
\begin{equation}
 \label{basicee}
\lambda \< D(\varphi)w, w\>_{L^2} = -  \| D(\varphi) \psi (w/\psi)_x \|_{L^2}^2,
\end{equation}
where $w = e^{-\theta} u \in H^2(\R;\C)$, $\psi = e^{-\theta} \varphi_x \in H^2(\R)$ is a non-vanishing real 
function, and $\theta = \theta(x)$ is defined in \eqref{gamma}.
\end{proposition}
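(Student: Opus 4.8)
The proof is the chain of manipulations displayed just above the statement; let me describe the structure I would follow. First I would write the spectral equation $(\cL-\lambda)u=0$ in the expanded form \eqref{spectralp2} and perform the conjugation $u = we^{\theta}$ with $\theta_x = -c/(2D(\varphi))$. This choice of $\theta$ is made precisely so that the first-order term carrying the constant-sign advection coefficient $c$ is cancelled in the transformed equation \eqref{eqw}; the only surviving first-order coefficient is then $2D(\varphi)_x$, which is exactly what allows \eqref{eqw} to be cast, after multiplication by $D(\varphi)$, in the self-adjoint Sturm--Liouville form appearing in the first line of \eqref{eqab}. Applying the same conjugation to the translation mode $\varphi_x\in\ker\cL$ produces $\psi := e^{-\theta}\varphi_x$, which solves the same equation with $\lambda=0$, i.e. the second line of \eqref{eqab}.

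Next I would invoke the monotonicity of the front. Since $\varphi_x<0$ everywhere, $\psi$ is real and nowhere vanishing, so the $\psi$-equation can be solved for the potential, $D(\varphi)H(x) = -(D(\varphi)^2\psi_x)_x/\psi$, and substituted into the $w$-equation to eliminate $H$ altogether, yielding \eqref{sltype}. Pairing \eqref{sltype} with $w$ in the complex $L^2$ inner product and integrating by parts in each of the two resulting terms gives
\[
\lambda\int_\R D(\varphi)|w|^2\,dx = \int_\R D(\varphi)^2\Big(\psi_x\big(|w|^2/\psi\big)_x - |w_x|^2\Big)\,dx ,
\]
and the pointwise algebraic identity $\psi^2\,|(w/\psi)_x|^2 = -\big(\psi_x(|w|^2/\psi)_x - |w_x|^2\big)$, which is verified by expanding $(w/\psi)_x = (\psi w_x - w\psi_x)/\psi^2$ and using that $\psi$ is real, turns the right-hand side into $-\|D(\varphi)\psi(w/\psi)_x\|_{L^2}^2$. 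This is \eqref{basicee}.

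The only genuinely delicate point --- everything else being bookkeeping --- is the legitimacy of all the operations performed on $w$ and $\psi$. The weight $e^{-\theta}$ diverges as $x\to+\infty$, since $D(\varphi)\to 0$ there forces $\theta\to-\infty$, so a priori it is not even clear that $w=e^{-\theta}u$ lies in $L^2$. This is exactly what Lemma \ref{lemgoodw} supplies: combining the asymptotic decay of $\varphi$ from Lemma \ref{lemdecayFd} with the sharp decay rates for $L^2$-eigenfunctions established in Appendix \ref{secsoldecay}, one obtains $w,\psi\in H^2(\R)$. Granting that, all the $L^2$-products above are finite, the boundary terms in the integrations by parts vanish as $x\to\pm\infty$, and --- using once more that $\psi$ is real and nowhere zero --- the division by $\psi$ and the integrability of $D(\varphi)^2\psi_x(|w|^2/\psi)_x$ are justified, so that \eqref{relaux}, and hence \eqref{basicee}, follow. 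Thus the main obstacle is entirely front-loaded into Lemma \ref{lemgoodw} and the decay analysis of the appendix; once those are in hand, the proposition is a direct computation.
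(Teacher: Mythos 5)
Your proposal is correct and follows essentially the same route as the paper: the conjugation $u=we^{\theta}$ with $\theta_x=-c/(2D(\varphi))$ to cancel the $cu_x$ term, multiplication by $D(\varphi)$ to reach the Sturm--Liouville forms \eqref{eqab}, elimination of the potential via the non-vanishing translation mode $\psi=e^{-\theta}\varphi_x$, and the integration by parts combined with the pointwise identity for $\psi^2|(w/\psi)_x|^2$. You also correctly identify that the only nontrivial issue is $w,\psi\in H^2$, which is exactly what the paper delegates to Lemma \ref{lemgoodw} and the decay analysis of Appendix \ref{secsoldecay}.
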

\begin{remark}
It is to be observed that the monotonicity of the front is crucial to obtain the resolvent type equation 
\eqref{sltype}, and consequently, the basic estimate \eqref{basicee}. In addition, the transformation 
\eqref{changev} is designed to eliminate the transport term $cu_x$ in \eqref{spectralp2}, while keeping the 
term $2D(\varphi)_x u_x$. A transformation that eliminates all the first order terms is not useful to close 
the energy estimate, as the dedicated reader may verify.
\end{remark}

\subsection{Point spectral stability}

The first application of the energy estimate is the following
\begin{theorem}
\label{thmptstab}
All monotone fronts of diffusion degenerate Fisher-KPP equations are point spectrally stable. More precisely,
\begin{equation}\label{ptstab}
\ptsp(\mathcal{L}) \subset ( -\infty,0]
\end{equation}
that is, the $L^2$-point spectrum is real and non-positive.
\end{theorem}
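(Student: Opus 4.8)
The plan is to read the conclusion off directly from the basic energy estimate \eqref{basicee}. Suppose $\lambda \in \ptsp(\mathcal{L})$; by Definition \ref{defspec} there exists $u \in \cD(\cL) = H^2(\R;\C)$ with $u \neq 0$ and $(\cL - \lambda)u = 0$. First I would invoke Lemma \ref{lemgoodw} to ensure that the conjugated function $w = e^{-\theta}u$ belongs to $H^2(\R;\C)$, and that $\psi = e^{-\theta}\varphi_x \in H^2(\R)$ is a well-defined real function which, by monotonicity of the front ($\varphi_x < 0$), never vanishes. These facts make all the $L^2$-products appearing in the energy estimate finite, so that
\[
\lambda \, \< D(\varphi)w, w\>_{L^2} = -\,\| D(\varphi)\,\psi\,(w/\psi)_x\|_{L^2}^2
\]
is at our disposal for this particular $\lambda$ and $u$.

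Next I would examine the two sides separately. The right-hand side is manifestly a non-positive real number. For the left-hand side, note that $\< D(\varphi)w,w\>_{L^2} = \int_\R D(\varphi)\,|w|^2\,dx$ is real and non-negative because $D(\varphi) \geq 0$ by \eqref{hypD}. The key observation is that this quantity is in fact \emph{strictly} positive: since $e^{-\theta}$ is finite and positive on all of $\R$, $u \neq 0$ forces $w \neq 0$; and $D(\varphi(x)) > 0$ for every $x \in \R$, because the diffusion degenerates only at $\varphi = 0$, a value the monotone profile attains solely in the limit $x \to +\infty$ and at no finite point. Hence the integrand is positive on a set of positive measure and $\< D(\varphi)w,w\>_{L^2} > 0$.

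Dividing the energy identity by this strictly positive real number then gives
\[
\lambda = -\,\frac{\| D(\varphi)\,\psi\,(w/\psi)_x\|_{L^2}^2}{\< D(\varphi)w,w\>_{L^2}} \in (-\infty,0],
\]
which is exactly \eqref{ptstab}; in particular every eigenvalue is real and non-positive. I do not expect a genuine obstacle at this stage: the substantive work has already been carried out in producing the estimate \eqref{basicee} and in the asymptotic-decay analysis underlying Lemma \ref{lemgoodw}. The only point that requires a moment's care — and the natural place where the degeneracy of $D$ could have spoiled the argument — is the strict positivity of $\< D(\varphi)w,w\>_{L^2}$, which survives precisely because the degeneracy is confined to the asymptotic value $u_+ = 0$ rather than occurring at any finite $x$.
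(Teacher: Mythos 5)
Your proposal is correct and follows essentially the same route as the paper: both deduce the result directly from the basic energy estimate \eqref{basicee}, using $D(\varphi)\geq 0$ and Lemma \ref{lemgoodw}. Your explicit verification that $\<D(\varphi)w,w\>_{L^2}>0$ (because the degeneracy occurs only in the limit $x\to+\infty$, not at any finite point) is a worthwhile detail the paper leaves implicit.
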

\begin{proof}
Let $\varphi= \varphi(x)$ be a degenerate Fisher-KPP monotone front, and let $\lambda \in 
\ptsp(\cL)$. Therefore, there exists $u \in H^2(\R;\C)$ such that $(\cL - \lambda) u = 0$. Since $D(\varphi) 
\geq 0$ for all $x\in \R$, the energy estimate \eqref{basicee} immediately shows that $\lambda$ is real and non-positive.
\end{proof}

\begin{corollary}
$\lambda=0$ has geometric multiplicity equal to one, that is, $\ker \mathcal{L} = \mathrm{span} \{ 
\varphi_x\}$.
\end{corollary}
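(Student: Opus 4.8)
The plan is to feed $\lambda = 0$ into the basic energy estimate \eqref{basicee}. Suppose $u \in \ker\mathcal{L}$, that is, $(\mathcal{L}-0)u = 0$ with $u \in H^2(\R;\C)$. The basic energy estimate then applies and gives
\[
0 = 0\cdot\langle D(\varphi)w,w\rangle_{L^2} = -\,\|D(\varphi)\,\psi\,(w/\psi)_x\|_{L^2}^2,
\]
where $w = e^{-\theta}u \in H^2(\R;\C)$ by Lemma \ref{lemgoodw}, $\psi = e^{-\theta}\varphi_x \in H^2(\R)$ is real, and $\theta$ is the function in \eqref{gamma}. Hence $D(\varphi)\,\psi\,(w/\psi)_x = 0$ almost everywhere on $\R$.

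Next I would turn this into the proportionality of $w$ and $\psi$. Since the front is monotone decreasing with $\varphi(-\infty)=1$ and $\varphi(+\infty)=0$, one has $\varphi(x) \in (0,1)$ for every $x \in \R$, so $D(\varphi(x)) > 0$ for all $x$ by \eqref{hypD}; moreover $\psi = e^{-\theta}\varphi_x < 0$ everywhere by monotonicity. Therefore $(w/\psi)_x$ vanishes a.e., and since $w,\psi \in H^2(\R) \hookrightarrow C^1(\R)$ with $\psi$ nowhere zero, the quotient $w/\psi$ is a $C^1$ function on the connected set $\R$ with identically zero derivative, hence a constant $k \in \C$. Then $u = e^{\theta}w = k\,e^{\theta}\psi = k\,e^{\theta}e^{-\theta}\varphi_x = k\,\varphi_x$, which shows $\ker\mathcal{L} \subseteq \mathrm{span}\{\varphi_x\}$; the reverse inclusion, together with $\varphi_x \not\equiv 0$, is the translation eigenvalue already observed after Definition \ref{defspecstab}. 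Thus $\ker \mathcal{L} = \mathrm{span}\{\varphi_x\}$ has dimension one.

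There is essentially no serious obstacle here: the energy estimate \eqref{basicee} has already absorbed the analytic work, and the key structural inputs are that $D(\varphi)$ is strictly positive away from the asymptotic states and that $\psi$ has a strict sign (a consequence of the front's monotonicity), which is precisely what converts the vanishing of the right-hand side of \eqref{basicee} into the stated conclusion. The only minor point worth checking is that the pointwise quotient $w/\psi$ is genuinely differentiable with the derivative appearing in \eqref{basicee}, and this is immediate from the Sobolev embedding $H^2(\R)\hookrightarrow C^1(\R)$ together with the fact that $\psi$ never vanishes.
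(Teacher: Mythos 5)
Your proof is correct and follows essentially the same route as the paper: set $\lambda=0$ in the basic energy estimate \eqref{basicee}, conclude $(w/\psi)_x=0$ a.e., hence $w=\beta\psi$ and $u=\beta\varphi_x$. The extra details you supply --- that $\varphi(x)\in(0,1)$ so $D(\varphi)>0$ pointwise, that $\psi$ is nowhere zero by monotonicity, and that $w/\psi\in C^1$ via the Sobolev embedding --- are exactly the justifications the paper leaves implicit.
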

\begin{proof}
If we suppose that $\lambda=0$, then estimate \eqref{basicee} yields
\[  
\left ( \frac{w}{\psi}\right )_x =0, \quad \text{a.e. in } \R,
\]
that is, $w = \beta \psi$  for some scalar $\beta$, which implies, in turn, that $u = \beta \varphi_x$. This 
shows that $\lambda =0$ has geometric 
multiplicity equal to one.
\end{proof}

\section{Parabolic regularization and location of $\sigma_\delta$}
\label{secparab}

In this section we introduce a regularization technique that allows us to locate the subset of the compression 
spectrum, $\sigma_\delta$, of the linearized operator around the wave. The method relies on the convergence 
in the generalized sense of the regularized operators.

\subsection{The regularized operator}
\label{secregop}

Let $\varphi = \varphi(x)$ be a diffusion-degenerate monotone Fisher-KPP front. Then, for any $\epsilon > 0$, 
we introduce the following regularization of the diffusion coefficient,
\begin{equation}
\label{regD}
D^\epsilon(\varphi) := D(\varphi) + \epsilon.
\end{equation}
Note that $D^\epsilon(\varphi) > 0$ for all $x \in \R$. Likewise, we also define the following regularized 
operator
\begin{equation}
\label{perturbOpL}
\begin{aligned}
\cL^\epsilon \, &: \, \cD = H^2(\R;\C) \subset L^2(\R;\C) \, \to \, L^2(\R;\C),\\
\cL^\epsilon u &:= (D^\epsilon(\varphi) u)_{xx} + cu_x + f'(\varphi)u.
\end{aligned}
\end{equation}

Notice that, for every $\epsilon > 0$, $- \, \cL^\epsilon$ is a linear, closed, densely defined and 
\textit{strongly elliptic} operator acting on $L^2$. Hence, multiplication by $D^\epsilon (\varphi)^{-1}$ is 
an isomorphism and the Fredholm properties of $\mathcal{L}^\epsilon - \lambda$ and those of the operator 
$\mathcal{J}^\epsilon(\lambda) : \cD \to L^2(\R;\C)$, defined as
\begin{equation}
\label{opJl}
\begin{aligned}
\mathcal{J}^\epsilon(\lambda)u &:=  D^\epsilon (\varphi)^{-1} (\mathcal{L}^\epsilon - \lambda)u \\&=  u_{xx} 
+ 
 D^\epsilon (\varphi)^{-1}a_1(x)u_x +  D^\epsilon (\varphi)^{-1}(a_0(x)- \lambda) u,
\end{aligned}
\end{equation}  
for all $u \in H^2 \subset L^2$, are the same. Here the coefficients $a_0$ and $a_1$ are given by
\[ 
a_1(x) =2 D^\epsilon(\varphi)_x + c,\qquad 
a_0(x) = D^\epsilon(\varphi)_{xx}+f'(\varphi).
 \]

Following Alexander, Gardner and Jones \cite{AGJ90}, it is now customary to recast the spectral problem 
\eqref{opJl} as a first order system of the form
\begin{equation}
\label{firstorder}
W_x = \A^\epsilon(x,\lambda) W,
\end{equation}
where
\[
\A^\epsilon(x,\lambda) = \begin{pmatrix}
0 & 1 \\ D^{\epsilon}(\varphi)^{-1}(\lambda - a_0(x)) & - D^{\epsilon}(\varphi)^{-1}a_1(x)
\end{pmatrix}, \qquad W = \begin{pmatrix} u \\ u_x \end{pmatrix} \in H^1(\R;\C^2).
\]

It is a well-known fact \cite{KaPro13,San02} that the associated first order operators
\[
\mathcal{T}^\epsilon(\lambda) =  \partial_x -\mathbb{A}^\epsilon(\cdot , \lambda), \qquad 
\mathcal{T}^\epsilon(\lambda) : H^1(\R;\C^2) \subset L^2(\R;\C^2) \to L^2(\R;\C),
\]
are endowed with the same Fredholm properties as $\mathcal{J}^\epsilon(\lambda)$ and, consequently, as 
$\cL^\epsilon - \lambda$; see, e.g., Theorem 3.2 in \cite{MRS14}, as well as the references \cite{KaPro13} and 
\cite{San02}. Moreover, these Fredholm properties depend upon the hyperbolicity of the asymptotic matrices 
(cf. \cite{San02}),
\[
\A_\pm^\epsilon(\lambda) = \lim_{x \to \pm \infty} \A^\epsilon(x,\lambda) = \begin{pmatrix}
0 & 1 \\ D^{\epsilon}(u_\pm)^{-1}(\lambda - f'(u_\pm)) & - D^{\epsilon}(u_\pm)^{-1}c
\end{pmatrix}.
\]

For each fixed $\lambda \in \C$, let us denote the characteristic polynomial of $\A_\pm^\epsilon(\lambda)$ as 
$\pi_\pm^\epsilon(\lambda,z) := \det (\A_\pm^\epsilon(\lambda) - zI)$. Then for each $k \in \R$, the 
$\lambda$-roots of
\[
\pi_\pm^\epsilon(\lambda,ik) = -k^2 + ikc D^\epsilon(u_\pm)^{-1} + D^\epsilon(u_\pm)^{-1}(f'(u_\pm)-\lambda) = 
0,
\]
define algebraic curves in the complex plane parametrized by $k \in \R$, more precisely,
\[
\lambda_\pm^\epsilon(k) := - D^\epsilon(u_\pm) k^2 + ick + f'(u_\pm), \quad k \in \R.
\]

Consider the following open connected subset in the complex plane,
\begin{equation}
\label{omegacs}
\Omega := \{ \lambda \in \C \, : \, \Re \lambda > \max \{f'(u_+), f'(u_-)\} \}.
\end{equation}
This is called the region of consistent splitting. Finally, for each fixed $\lambda \in \C$ and $\epsilon > 
0$, let us denote $S_\pm^\epsilon(\lambda)$ and $U_\pm^\epsilon(\lambda)$ as the stable and unstable 
eigenspaces of $\A_\pm^\epsilon(\lambda)$ in $\C^2$, respectively.

\begin{lemma}
\label{lemconsplit}
For each $\lambda \in \Omega$ and all $\epsilon > 0$, the coefficient matrices $\A^\epsilon_\pm(\lambda)$ 
have no center eigenspace and $\dim S_\pm^\epsilon(\lambda) = \dim U_\pm^\epsilon(\lambda) = 1$.
\end{lemma}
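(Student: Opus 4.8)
The plan is to exhibit the region of consistent splitting $\Omega$ defined in \eqref{omegacs} as the natural domain where the asymptotic matrices $\A^\epsilon_\pm(\lambda)$ are hyperbolic, and to read off the dimensions of the stable and unstable eigenspaces directly from the characteristic polynomial. First I would fix $\lambda \in \Omega$ and $\epsilon > 0$ and write down $\pi_\pm^\epsilon(\lambda,z) = z^2 + D^\epsilon(u_\pm)^{-1} c \, z - D^\epsilon(u_\pm)^{-1}(\lambda - f'(u_\pm))$, a monic quadratic in $z$ with roots $z_\pm^{1,2}$. Since $D^\epsilon(u_\pm) = D(u_\pm) + \epsilon > 0$ for both signs (this is exactly the point of the regularization \eqref{regD}), the coefficients are well defined; the product of the two roots is $z^1 z^2 = -D^\epsilon(u_\pm)^{-1}(\lambda - f'(u_\pm))$.

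The key computation is to show that neither root is purely imaginary when $\lambda \in \Omega$, which is equivalent to saying $\A^\epsilon_\pm(\lambda)$ has no center eigenspace. Setting $z = ik$ with $k \in \R$ and using the dispersion relation already derived in the excerpt, $\pi_\pm^\epsilon(\lambda,ik) = 0$ forces $\lambda = \lambda_\pm^\epsilon(k) = -D^\epsilon(u_\pm) k^2 + ick + f'(u_\pm)$; taking real parts gives $\Re \lambda = -D^\epsilon(u_\pm) k^2 + f'(u_\pm) \le f'(u_\pm) \le \max\{f'(u_+), f'(u_-)\}$, contradicting $\lambda \in \Omega$. Hence no eigenvalue of $\A^\epsilon_\pm(\lambda)$ lies on the imaginary axis, so the matrix is hyperbolic and $\C^2 = S_\pm^\epsilon(\lambda) \oplus U_\pm^\epsilon(\lambda)$ with $\dim S_\pm^\epsilon(\lambda) + \dim U_\pm^\epsilon(\lambda) = 2$.

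It remains to rule out the degenerate splittings $\dim S = 2$ or $\dim S = 0$ (equivalently $\dim U = 0$ or $2$), i.e.\ to show that the two roots $z^1, z^2$ lie on opposite sides of the imaginary axis rather than the same side. I would argue this by a connectedness/homotopy argument: $\Omega$ is open and connected, the eigenvalues of $\A^\epsilon_\pm(\lambda)$ depend continuously on $\lambda$, and by the previous paragraph they never cross the imaginary axis on $\Omega$, so the signs of $\Re z^1$ and $\Re z^2$ are constant on $\Omega$; it then suffices to evaluate at one convenient point. Taking $\lambda \to +\infty$ real (which stays in $\Omega$), the product of the roots $z^1 z^2 = -D^\epsilon(u_\pm)^{-1}(\lambda - f'(u_\pm))$ is real and negative, forcing one root with positive and one with negative real part; hence $\dim S_\pm^\epsilon(\lambda) = \dim U_\pm^\epsilon(\lambda) = 1$ throughout $\Omega$.

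The main obstacle is the last step — ensuring the splitting is $1+1$ rather than $2+0$ — but it is a mild one, since the sign of the product of the roots is transparent from the characteristic polynomial and the homotopy invariance of the dimensions over the connected set $\Omega$ does the rest. Everything else is elementary linear algebra once the regularization guarantees $D^\epsilon(u_\pm) > 0$ at both endpoints, which is precisely why the operator $\cL^\epsilon$ was introduced in place of the degenerate $\cL$.
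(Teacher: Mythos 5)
Your proposal is correct and follows essentially the same route as the paper: rule out purely imaginary eigenvalues by observing that the dispersion curves $\lambda_\pm^\epsilon(k)$ have real part at most $f'(u_\pm)$, then use continuity of the eigenvalues and connectedness of $\Omega$ to reduce the dimension count to a single large real $\lambda$. The only cosmetic difference is that you read off the $1+1$ splitting from the sign of the product of the roots (Vieta) rather than from the explicit quadratic formula as in the paper; both give one root in each open half-plane.
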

\begin{proof}
Notice that 
\[
\max_{k \in \R} \, \Re \lambda_\pm^\epsilon(k) = f'(u_\pm),
\]
independently of $\epsilon > 0$. Therefore, for each $\lambda \in \Omega$ it is clear that 
$\A^\epsilon_\pm(\lambda)$ has no center eigenspace, for all $\epsilon > 0$. By continuity on $\lambda$ and by 
connectedness of $\Omega$, the dimensions of $S_\pm^\epsilon(\lambda)$ and $U_\pm^\epsilon(\lambda)$ remain 
constant in $\Omega$. To compute them, set $\lambda = \eta \in \R$, with $\eta$ sufficiently large. The 
characteristic equation $\pi_\pm^\epsilon(\eta,z) = 0$ has one positive and one negative root:
\[
\begin{aligned}
z_1 &= \tfrac{1}{2}D^\epsilon(u_\pm)^{-1}\big( -c-\sqrt{c^2 + 4 D^\epsilon(u_\pm)(\eta - f'(u_\pm)) }\big) < 
0,\\
z_2 &= \tfrac{1}{2}D^\epsilon(u_\pm)^{-1}\big( -c+\sqrt{c^2 + 4 D^\epsilon(u_\pm)(\eta - f'(u_\pm)) }\big) > 
0,
\end{aligned}
\]
as long as $\eta > 0$ is large, say, $\eta > \max \{|f'(u_\pm)|\}$. The conclusion follows.
\end{proof}

The following lemma characterizes the Fredholm properties of $\cL^\epsilon - \lambda$ for $\lambda$ in the 
region of consistent splitting.

\begin{lemma}
\label{lemLep}
For all $\epsilon > 0$ and for each $\lambda \in \Omega$, the operator $\cL^\epsilon - \lambda$ is Fredholm 
with index zero.
\end{lemma}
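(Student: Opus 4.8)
The plan is to deduce the Fredholm properties of $\cL^\epsilon - \lambda$ from the dichotomy structure of the first order system \eqref{firstorder}, exploiting the equivalences already recorded in the text: $\cL^\epsilon - \lambda$, $\mathcal{J}^\epsilon(\lambda)$ and $\mathcal{T}^\epsilon(\lambda) = \partial_x - \A^\epsilon(\cdot,\lambda)$ all share the same Fredholm properties, since multiplication by $D^\epsilon(\varphi)^{-1}$ is an isomorphism on $L^2$ and the passage to first order form preserves Fredholmness (cf.\ \cite{KaPro13,San02,MRS14}). So it suffices to prove that $\mathcal{T}^\epsilon(\lambda)$ is Fredholm of index zero for $\lambda \in \Omega$.

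First I would invoke Lemma \ref{lemconsplit}: for $\lambda \in \Omega$ and every $\epsilon > 0$, the asymptotic matrices $\A^\epsilon_\pm(\lambda)$ are hyperbolic with $\dim U_\pm^\epsilon(\lambda) = \dim S_\pm^\epsilon(\lambda) = 1$. Because the coefficient matrix $\A^\epsilon(x,\lambda)$ converges exponentially to its limits $\A^\epsilon_\pm(\lambda)$ as $x \to \pm\infty$ (this follows from the exponential decay of $\varphi_x$ and $\varphi - u_\pm$ on both sides, Lemma \ref{lemdecayFd}, together with $D^\epsilon(\varphi)^{-1}$ being smooth and bounded below by $\epsilon^{-1}$... wait, bounded above by $\epsilon^{-1}$ and below away from zero, so all coefficients $a_0, a_1$ and their quotients decay exponentially to constants), the system \eqref{firstorder} is asymptotically hyperbolic in the sense required by the standard theory. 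I would then quote the Palmer-type / Sandstede theorem (Theorem 3.2 in \cite{MRS14}, or the exposition in \cite{San02,KaPro13}): an asymptotically hyperbolic first order operator on $L^2(\R)$ is Fredholm precisely when its asymptotic matrices are hyperbolic, and in that case its index equals the difference of the dimensions of the unstable subspaces at $+\infty$ and $-\infty$ — more precisely $\ind \mathcal{T}^\epsilon(\lambda) = \dim U_-^\epsilon(\lambda) - \dim U_+^\epsilon(\lambda)$, which here is $1 - 1 = 0$.

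The only genuine point requiring care, and which I expect to be the main obstacle, is the verification that the hypotheses of that general theorem are actually met — in particular that the off-diagonal and diagonal entries of $\A^\epsilon(x,\lambda) - \A^\epsilon_\pm(\lambda)$ decay fast enough (integrably, or exponentially) as $x \to \pm\infty$. This is where the regularization earns its keep: on the degenerate side $x \to +\infty$ one has $D(\varphi) \to 0$, so $D(\varphi)^{-1}$ blows up and $\A(x,\lambda)$ (the $\epsilon = 0$ matrix) has \emph{no} finite limit and the theory does not apply; but $D^\epsilon(\varphi) = D(\varphi) + \epsilon \to \epsilon > 0$, so $D^\epsilon(\varphi)^{-1}$ is smooth, positive, bounded, and converges exponentially to $D^\epsilon(u_+)^{-1} = \epsilon^{-1}$, with $a_0(x), a_1(x)$ likewise converging exponentially (using Lemma \ref{lemdecayFd} and $D \in C^2$). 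On the non-degenerate side $x \to -\infty$ the limit $D^\epsilon(u_-)^{-1} = (D(1)+\epsilon)^{-1}$ is attained exponentially by the same reasoning. Hence $\A^\epsilon(x,\lambda) \to \A^\epsilon_\pm(\lambda)$ exponentially for every fixed $\epsilon > 0$, the general Fredholm criterion applies verbatim, and we conclude $\cL^\epsilon - \lambda$ is Fredholm with $\ind(\cL^\epsilon - \lambda) = \ind \mathcal{T}^\epsilon(\lambda) = \dim U^\epsilon_-(\lambda) - \dim U^\epsilon_+(\lambda) = 0$ for all $\lambda \in \Omega$ and all $\epsilon > 0$.
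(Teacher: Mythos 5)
Your proposal is correct and follows essentially the same route as the paper: reduce to the first order operator $\mathcal{T}^\epsilon(\lambda)=\partial_x-\A^\epsilon(\cdot,\lambda)$, use Lemma \ref{lemconsplit} to get hyperbolicity of $\A^\epsilon_\pm(\lambda)$ and hence exponential dichotomies on the two half-lines, and apply the Palmer/Sandstede Fredholm criterion (Theorem 3.2 in \cite{San02}) to conclude that the index is the difference of the Morse indices, which vanishes since $\dim U_+^\epsilon(\lambda)=\dim U_-^\epsilon(\lambda)=1$. Your extra verification that the coefficients of $\A^\epsilon(x,\lambda)$ converge (exponentially) to their limits — the point where $\epsilon>0$ is genuinely used — is a hypothesis the paper leaves implicit under ``standard exponential dichotomies theory,'' so it is a welcome but not divergent addition.
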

\begin{proof}
Let $\lambda \in \Omega$. Since the matrices $\A_\pm^\epsilon(\lambda)$ are hyperbolic, by standard 
exponential dichotomies theory \cite{Cop78} (see also Theorem 3.3 in \cite{San02}), system \eqref{firstorder} 
is endowed with exponential dichotomies on both rays $[0,+\infty)$ and $(-\infty,0]$, with Morse indices 
$i_+(\lambda) = \dim U_+^\epsilon(\lambda) = 1$ and $i_-(\lambda) = \dim S_-^\epsilon(\lambda) = 1$, 
respectively.  Therefore, by Theorem 3.2 in \cite{San02}, we conclude that the operators 
$\mathcal{T}^\epsilon(\lambda)$ are Fredholm with index
\[
\ind \mathcal{T}^\epsilon(\lambda) = i_+(\lambda) - i_-(\lambda) = 0,
\]
showing that $\mathcal{J}^\epsilon(\lambda)$ and $\cL^\epsilon - \lambda$ are Fredholm with index zero, as 
claimed.
\end{proof}

\subsection{Generalized convergence}

We are going to profit from the independence of the Fredholm properties of $\cL^\epsilon - \lambda$ with 
respect to $\epsilon > 0$ in order to conclude some useful information about the Fredholm properties of $\cL - 
\lambda$. First, we recall the succeeding standard definitions (cf. Kato \cite{Kat80}): Let $Z$ be a Banach 
space, and let $M$ and $N$ be any two nontrivial closed subspaces of $Z$. Let $S_M$ be the unitary sphere in 
$M$. Then we define 
\begin{equation*}
\delta(M,N) =\sup_{u\in S_M} \mathrm{dist}(u,N),
\end{equation*}
\begin{equation*}
\hat{\delta}(M,N) = \max \{\delta(M,N),\delta(N,M)\}.
\end{equation*}
$\hat{\delta}$ is called the \textit{gap} between $M$ and $N$. (Here the function $\mathrm{dist}(u, M)$ is the 
usual distance function from $u$ to any closed manifold $M$.) Since in general $\hat \delta(\cdot, \cdot)$  
does not satisfy the triangle inequality, one defines
\begin{equation*}
\mathrm{d}(M,N) =\sup_{u\in S_M} \mathrm{dist}(u,S_N),
\end{equation*}
\begin{equation*}
\mathrm{\hat{d}}(M,N) = \max \{\mathrm{d}(M,N),\mathrm{d}(N,M)\}.
\end{equation*}
$\mathrm{\hat{d}}(M,N)$ is called the \textit{distance} between $M$ and $N$, and satisfies the triangle 
inequality. Furthermore, there hold the inequalities \cite{Kat80},
\begin{equation}
\label{dineq}
\begin{aligned}
\delta(M,N) &\leq \mathrm{d}(M,N) \leq 2 \delta(M,N),\\
\hat{\delta}(M,N) &\leq \mathrm{\hat{d}}(M,N) \leq 2 \hat{\delta}(M,N),
\end{aligned}
\end{equation}
for any closed manifolds $M$ and $N$.

\begin{definition}
Let $X,Y$ be Banach spaces. If $\mathcal{T}, \mathcal{S} \in \mathscr{C}(X,Y)$, then the graphs 
$G(\mathcal{T}), G(\mathcal{S})$ 
are closed subspaces of $X\times Y$, and we set 
\begin{equation*}
\mathrm{d}(\mathcal{T},\mathcal{S}) = \mathrm{d}(G(\mathcal{T}), G(\mathcal{S})) ,
\end{equation*}
\begin{equation*}
\mathrm{\hat{d}}(\mathcal{T},\mathcal{S}) = \max 
\{\mathrm{d}(\mathcal{T},\mathcal{S}),\mathrm{d}(\mathcal{S},\mathcal{T})\}.
\end{equation*}
It is said that a sequence $\mathcal{T}_n \in \mathscr{C}(X,Y)$ \textit{converges in generalized sense} to 
$\mathcal{T}  \in \mathscr{C}(X,Y)$ provided that 
$\mathrm{\hat{d}}(\mathcal{T}_n,\mathcal{T}) \to 0$ as $n \to +\infty$.
\end{definition}
\begin{remark}
It follows from inequalities \eqref{dineq} that $\mathrm{\hat{d}}(\mathcal{T}_n,\mathcal{T}) \to 0$ is 
equivalent to ${\hat{\delta}}(\mathcal{T}_n,\mathcal{T}) \to 0$
\end{remark}

\begin{lemma}
\label{lemconv}
For each fixed $\lambda \in \C$, the operators $\cL^\epsilon - \lambda$ converge in generalized sense to $\cL 
- \lambda$ as $\epsilon \to 0^+$.
\end{lemma}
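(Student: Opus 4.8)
The plan is to use the classical equivalence between generalized convergence and norm resolvent convergence (cf. Kato \cite{Kat80}, Ch.~IV, \S2): it suffices to produce a single real $\lambda_0$ with $\lambda_0\in\rho(\cL)\cap\bigcap_{\epsilon>0}\rho(\cL^\epsilon)$ such that $\|(\cL^\epsilon-\lambda_0)^{-1}-(\cL-\lambda_0)^{-1}\|_{L^2\to L^2}\to 0$ as $\epsilon\to 0^+$. This yields generalized convergence of $\cL^\epsilon-\lambda_0$ to $\cL-\lambda_0$; and since, for any $\lambda\in\C$, the point $\lambda_0-\lambda$ lies in $\rho(\cL-\lambda)\cap\bigcap_{\epsilon}\rho(\cL^\epsilon-\lambda)$ with $\big(\cL^\epsilon-\lambda-(\lambda_0-\lambda)\big)^{-1}=(\cL^\epsilon-\lambda_0)^{-1}$, the same equivalence transfers the conclusion to $\cL^\epsilon-\lambda\to\cL-\lambda$ for every $\lambda$.

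First I would fix a real $\lambda_0>\max\{f'(0),\,C_0\}$, where $C_0:=\tfrac12\|D(\varphi)_{xx}\|_{L^\infty}+\sup_{[0,1]}|f'|$. An integration by parts of the sort used in Section~\ref{secenergy} shows that for every $u\in H^2$ and every $\epsilon\ge0$,
\[
\Re\<u,(\cL^\epsilon-\lambda_0)u\>_{L^2}=\tfrac12\int_\R D(\varphi)_{xx}|u|^2\,dx-\int_\R(D(\varphi)+\epsilon)|u_x|^2\,dx+\int_\R(f'(\varphi)-\lambda_0)|u|^2\,dx\le(C_0-\lambda_0)\|u\|_{L^2}^2,
\]
because $D(\varphi)+\epsilon\ge0$; hence $\|(\cL^\epsilon-\lambda_0)u\|_{L^2}\ge(\lambda_0-C_0)\|u\|_{L^2}$ for all $\epsilon\ge0$, so each $\cL^\epsilon-\lambda_0$ and $\cL-\lambda_0$ is injective with closed range and with inverse bounded by $M:=(\lambda_0-C_0)^{-1}$ on its range, uniformly in $\epsilon$. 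For $\epsilon>0$, Lemma~\ref{lemLep} (applicable since $\lambda_0\in\Omega$) shows in addition that $\cL^\epsilon-\lambda_0$ is Fredholm of index zero, so injectivity forces surjectivity and $\lambda_0\in\rho(\cL^\epsilon)$ with $\|(\cL^\epsilon-\lambda_0)^{-1}\|\le M$ independently of $\epsilon$. For $\epsilon=0$ one still has to rule out a cokernel: since $\cR(\cL-\lambda_0)$ is closed it is enough to show $\ker\big((\cL-\lambda_0)^*\big)=\{0\}$. Any element $w$ of this kernel solves the degenerate formal‑adjoint equation $D(\varphi)w_{xx}-cw_x+(f'(\varphi)-\lambda_0)w=0$ distributionally; it is smooth wherever $D(\varphi)>0$, and a study of the asymptotic ODE at $x=+\infty$ — of exactly the kind carried out in Lemma~\ref{lemdecayFd} and Appendix~\ref{secsoldecay} — shows that any $L^2$ solution is forced onto the slowly decaying branch $\sim e^{-(\lambda_0-f'(0))x/c}$ (the complementary branch grows super‑exponentially), hence lies in $H^2$ near $+\infty$; likewise $w$ decays exponentially as $x\to-\infty$ by hyperbolicity of the asymptotic coefficients there. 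Thus $w\in H^2=\cD(\cL)$, and the $H^2$‑valid energy identity above, with $\cL$ replaced by its formal adjoint, gives $(\lambda_0-C_0)\|w\|_{L^2}^2\le0$, so $w=0$; therefore $\lambda_0\in\rho(\cL)$.

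It remains to let $\epsilon\to0^+$. On the common domain $H^2$ one has $\cL^\epsilon=\cL+\epsilon\,\partial_x^2$, so the second resolvent identity gives
\[
(\cL^\epsilon-\lambda_0)^{-1}-(\cL-\lambda_0)^{-1}=-\,\epsilon\,(\cL^\epsilon-\lambda_0)^{-1}\,\partial_x^2\,(\cL-\lambda_0)^{-1}.
\]
Here $(\cL-\lambda_0)^{-1}$ maps $L^2$ boundedly into $\cD(\cL)=H^2$ (closed graph theorem), so $\partial_x^2(\cL-\lambda_0)^{-1}\colon L^2\to L^2$ is bounded with a norm $M'$ not involving $\epsilon$; together with $\|(\cL^\epsilon-\lambda_0)^{-1}\|\le M$ this gives $\|(\cL^\epsilon-\lambda_0)^{-1}-(\cL-\lambda_0)^{-1}\|\le\epsilon MM'\to0$, the required norm resolvent convergence. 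The step I expect to be the main obstacle is the surjectivity of $\cL-\lambda_0$: because the diffusion degenerates at $+\infty$ the operator loses ellipticity there, no standard a priori $H^2$ estimate is available, and one must descend to the asymptotic ODE and control its fundamental solutions by hand. I also stress the asymmetry that must be respected in the last display: $\partial_x^2$ has to be kept adjacent to the non‑regularized resolvent $(\cL-\lambda_0)^{-1}$, since $\partial_x^2(\cL^\epsilon-\lambda_0)^{-1}$ has $L^2\to L^2$ norm of order $\epsilon^{-1}$ and the estimate would not close with the factors arranged the other way.
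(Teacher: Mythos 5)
Your strategy — reduce generalized convergence to norm resolvent convergence at a single common resolvent point $\lambda_0$, via Kato's Theorem IV.2.25 — is a legitimate equivalence and diverges substantially from the paper's argument. The paper's proof is a direct, one-page computation with the graph gap: given $v=\{p,(\cL^\epsilon-\lambda)p\}$ on the unit sphere of $G(\cL^\epsilon-\lambda)$, it compares with $w=\{p,(\cL-\lambda)p\}$ (same first component, which is admissible since $\cD(\cL)=\cD(\cL^\epsilon)=H^2$), so the graph distance collapses to $\|\epsilon p_{xx}\|_{L^2}$; the relative boundedness of $\partial_x^2$ with respect to $\cL^\epsilon-\lambda$ (closed graph theorem, Kato's Remark IV.1.5) then gives $\hat{\mathrm{d}}(\cL^\epsilon-\lambda,\cL-\lambda)=O(\epsilon)$. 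No resolvent point of any operator is needed, which is precisely why the lemma can sit \emph{upstream} of all the spectral information in the paper.

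The genuine gap in your route is the assertion $\lambda_0\in\rho(\cL)$. Injectivity and closed range of $\cL-\lambda_0$ follow from your coercivity estimate, and $\lambda_0\in\rho(\cL^\epsilon)$ follows from Lemma \ref{lemLep} plus injectivity without circularity; but surjectivity of the \emph{degenerate} operator $\cL-\lambda_0$ is exactly the kind of statement the paper's entire architecture (the partition $\ptsp\cup\spd\cup\sppi$, the regularization, the Fredholm-index transfer) is built to avoid proving directly — and in the paper it only emerges a posteriori by combining Theorem \ref{thmptstab}, Theorem \ref{alaizq} (whose proof \emph{uses} Lemma \ref{lemconv}) and the $a=0$ analogue of Lemma \ref{lemelem}. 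Your proposed substitute — triviality of $\ker\bigl((\cL-\lambda_0)^*\bigr)$ via an asymptotic ODE analysis of the formal adjoint equation at the degenerate end — is plausible (the two WKB branches do behave as you say: one $\sim e^{(f'(0)-\lambda_0)x/c}$, the other super-exponentially growing like $\exp\bigl(c\int^x D(\varphi)^{-1}\bigr)$), but it is not carried out, and it is not available off the shelf: Appendix \ref{secsoldecay} treats the spectral equation for $\cL$ itself, for solutions already assumed to lie in $H^2$, not arbitrary $L^2$ distributional solutions of the adjoint equation, so a separate Coppel/Levinson-type argument would have to be written out, together with the justification that such a $w$ has enough decay of $w_x$ at $+\infty$ to run the adjoint energy identity. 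Until that is done, the crux of your proof is missing; and since the step is avoidable by the direct graph-gap computation, the detour buys nothing except a harder problem.
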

\begin{proof}
From the definition of $\mathrm{d}(\cdot,\cdot)$ we have
\begin{equation*}
\mathrm{d}(\mathcal{L}^\epsilon -\lambda,\mathcal{L} -\lambda) = \mathrm{d}(G(\mathcal{L}^\epsilon 
-\lambda),G(\mathcal{L} -\lambda)) = \sup_{v\in S_{G(\mathcal{L}^\epsilon -\lambda)}} 
\left ( \inf_{w\in S_{G(\mathcal{L} -\lambda)}} \| v-w \|  \right ).
\end{equation*}

Let $v\in S_{G(\mathcal{L}^\epsilon -\lambda)}$ be such that $v = \{p,(\mathcal{L}^\epsilon -\lambda)p \}$ 
for 
$p\in \mathcal{D}(\mathcal{L}^\epsilon -\lambda) = H^2$, and 
\begin{equation*}
\| v\| ^2_{L^2 \times L^2} = \| p \| ^2_{L^2} + \| (\mathcal{L}^\epsilon -\lambda)p \| ^2_{L^2} =1.
\end{equation*}
Likewise, let $w\in S_{G(\mathcal{L} -\lambda)}$ be such that $w = \{u,(\mathcal{L} -\lambda)u \}$, for $u\in 
\mathcal{D}(\mathcal{L} -\lambda) = H^2$ and
\begin{equation*}
\| w\| ^2_{L^2 \times L^2} = \| u \| ^2_{L^2} + \| (\mathcal{L} -\lambda)u \| ^2_{L^2} =1.
\end{equation*}

Now, we find a upper bound for $\| v-w\|_{L^2 \times L^2}$. Consider, 
\begin{equation}
\label{graphNorm}
\| v-w \| ^2_{L^2 \times L^2} = \| p-u \| ^2_{L^2} +\| (\mathcal{L}^\epsilon -\lambda)p -(\mathcal{L} 
-\lambda)u \| ^2_{L^2}.
\end{equation}
If we keep $v \in S_{G(\mathcal{L}^\epsilon -\lambda)}$ fixed, then it suffices to take $w = \{p,(\mathcal{L} 
-\lambda)p \}$, inasmuch as  $p \in H^2 = \mathcal{D}(\mathcal{L}) = \mathcal{D}(\mathcal{L}^\epsilon)$. Note 
that $(\mathcal{L} -\lambda)p = (\mathcal{L}^\epsilon -\lambda)p - \epsilon p_{xx}$. Therefore expression 
\eqref{graphNorm} gets simplified:
\begin{equation*}
\| v-w \| ^2_{L^2 \times L^2} = \| (\mathcal{L}^\epsilon -\lambda)p -(\mathcal{L} -\lambda)p \| ^2_{L^2} = \| 
\epsilon p_{xx} \| ^2_{L^2}.
\end{equation*}

If we regard $\partial_x^2$ as a closed, densely defined operator on $L^2(\R;\C)$, with domain $\cD = 
H^2(\R;\C)$, then it follows from Remark 1.5 in \cite[p. 191]{Kat80}, that $\partial_x^2$ is 
$(\mathcal{L}^\epsilon -\lambda)-$bounded, i.e., there exist a constant $C>0$ such that 
\begin{equation*}
\| p_{xx} \|_{L^2} \leq C (\| p \| _{L^2} + \| (\mathcal{L}^\epsilon -\lambda)p \| _{L^2} ),
\end{equation*}
for all $p \in H^2$. Consequently
\[ \| p_{xx} \|^2_{L^2} \leq \bar{C} (\| p \| ^2_{L^2}+\| (\mathcal{L}^\epsilon -\lambda)p \| 
^2_{L^2})=\bar{C},
\]
for some other $\bar{C} > 0$ and for $v = (p, (\cL^\epsilon - \lambda)p) \in S_{G(\mathcal{L}^\epsilon 
-\lambda)}$. This estimate implies, in turn, that
\[ 
\| v-w \| ^2_{L^2 \times L^2} = \epsilon^2 \| p_{xx} \| ^2_{L^2} \leq \bar{C}\epsilon^2.
\]
This yields,
\begin{equation*}
\mathrm{d}(\mathcal{L}^\epsilon -\lambda,\mathcal{L} -\lambda) \leq \bar{C}\epsilon^2.
\end{equation*}
In a similar fashion it can be proved that 
$\mathrm{d}(\mathcal{L} -\lambda,\mathcal{L}^\epsilon -\lambda) \leq C\epsilon^2.$
This shows that $\hat{\mathrm{d}}(\mathcal{L}^\epsilon -\lambda,\mathcal{L} -\lambda) \to 0$
as $\epsilon\to 0 ^{+}$, and the conclusion follows.
\end{proof}

For the reader's convenience we state a result from functional analysis (cf. \cite{Kat80}, pg. 235), which 
will be helpful to relate the Fredholm properties of $\cL^\epsilon - \lambda$ to those of $\cL - \lambda$.  
First, we remind the reader the definition of the reduced minimum modulus of a closed operator.
\begin{definition}
For any $\mathcal{S}\in \mathscr{C}(X,Y)$ we define $\gamma= \gamma(\mathcal{S})$, as the greatest number 
$\gamma \in \mathbb{R}$ such that  
\[ 
\| \mathcal{S}u \|  \geq \gamma \: \mathrm{dist}(u, \ker \mathcal{S}), 
\]
for all $u \in  \mathcal{D}(\mathcal{\mathcal{S}})$.
\end{definition}

\begin{theorem}[Kato \cite{Kat80}]
\label{thmindexKato}
Let $\mathcal{T}, \mathcal{S} \in \mathscr{C}(X,Y)$ and let $\mathcal{T}$ be Fredholm (semi-Fredholm). If 
\[
\hat{\delta}(\mathcal{S},\mathcal{T})< \gamma (1+ \gamma^2)^{-1/2},
\] 
where $\gamma=\gamma(\mathcal{T})$, then $\mathcal{S}$ is Fredholm (semi-Fredholm) and $\nul \mathcal{S} 
\leq \nul \mathcal{T}$, $\mathrm{def}\, \mathcal{S} \leq \mathrm{def}\, \mathcal{T}$. Furthermore, there 
exists 
$\delta> 0$ such that $\hat{\delta}(\mathcal{S},\mathcal{T})<\delta$ implies 
\[ 
\ind \mathcal{S} = \ind \mathcal{T}. 
\]
If $X,Y$ are Hilbert spaces then we can take $\delta = \gamma (1+ \gamma^2)^{-\frac{1}{2}}$.
\end{theorem}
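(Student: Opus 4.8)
The plan is to reduce the whole statement to a perturbation result for a pair of closed subspaces of the product space $Z\df X\times Y$ (with the $\ell^{2}$ product norm), which is the natural arena for the gap metric. First I would set $M_0\df X\times\{0\}$ and record, for any $\mathcal{T}\in\ccC(X,Y)$, the identities $G(\mathcal{T})\cap M_0=\ker\mathcal{T}\times\{0\}$ and $G(\mathcal{T})+M_0=\pi_Y^{-1}\big(\mathcal{R}(\mathcal{T})\big)$, where $\pi_Y:Z\to Y$ is the open continuous coordinate projection with closed kernel $M_0$. These give $\mathrm{codim}_Z(G(\mathcal{T})+M_0)=\mathrm{def}\,\mathcal{T}$ and show that $\mathcal{R}(\mathcal{T})$ is closed (resp. of finite codimension) iff $G(\mathcal{T})+M_0$ is; hence $\mathcal{T}$ is (semi-)Fredholm precisely when $(G(\mathcal{T}),M_0)$ is a (semi-)Fredholm pair of subspaces of $Z$ (i.e., $G(\mathcal{T})\cap M_0$ finite-dimensional and $G(\mathcal{T})+M_0$ closed, with at least one of $\dim(G(\mathcal{T})\cap M_0)$, $\mathrm{codim}_Z(G(\mathcal{T})+M_0)$ finite), and then
\[
\nul\mathcal{T}=\dim\big(G(\mathcal{T})\cap M_0\big),\qquad \ind\mathcal{T}=\dim\big(G(\mathcal{T})\cap M_0\big)-\mathrm{codim}_Z\big(G(\mathcal{T})+M_0\big),
\]
with the identical formulas for $\mathcal{S}$. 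Since $\hat{\delta}(G(\mathcal{S}),G(\mathcal{T}))=\hat{\delta}(\mathcal{S},\mathcal{T})$ is small while $M_0$ is held fixed, the theorem becomes a statement about the stability of a Fredholm pair under a small gap-perturbation of one member.

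Next I would pin down the geometric role of the threshold $\gamma(1+\gamma^{2})^{-1/2}$, with $\gamma=\gamma(\mathcal{T})$: it measures how transverse $G(\mathcal{T})$ is to the fixed subspace $M_0$ away from their intersection. The relevant elementary computation is that for $z=(u,\mathcal{T}u)\in G(\mathcal{T})$ one has $\mathrm{dist}(z,M_0)=\|\mathcal{T}u\|$ (the infimum over $X\times\{0\}$ being attained at $(u,0)$), while $\|z\|=(\|u\|^{2}+\|\mathcal{T}u\|^{2})^{1/2}$ and, by the very definition of the reduced minimum modulus, $\|\mathcal{T}u\|\ge\gamma\,\mathrm{dist}(u,\ker\mathcal{T})$; a short optimization in these three quantities produces the constant $\gamma(1+\gamma^{2})^{-1/2}$ as a lower gap-bound between $M_0$ and the part of $G(\mathcal{T})$ complementary to $G(\mathcal{T})\cap M_0$ (the complement exists because that intersection is finite-dimensional). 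This is precisely the quantity the hypothesis requires $\hat{\delta}(\mathcal{S},\mathcal{T})$ to undercut.

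With these two ingredients, the remainder would run as a routine gap-continuity argument built on the standard upper semicontinuity of dimension in the gap metric (if closed subspaces $M,N$ satisfy $\delta(M,N)<1$ then $\dim M\le\dim N$; Kato \cite{Kat80}, Ch. IV) together with the inequalities \eqref{dineq}. Concretely, I would argue that every unit vector of $G(\mathcal{S})\cap M_0$ lies within $\hat{\delta}(\mathcal{S},\mathcal{T})$ of $G(\mathcal{T})$ and hence, by the transversality bound above, within distance $<1$ of $G(\mathcal{T})\cap M_0$, which forces $\nul\mathcal{S}=\dim(G(\mathcal{S})\cap M_0)\le\dim(G(\mathcal{T})\cap M_0)=\nul\mathcal{T}$ as soon as $\hat{\delta}(\mathcal{S},\mathcal{T})<\gamma(1+\gamma^{2})^{-1/2}$. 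Passing to annihilators in $Z^{*}$ (the gap being invariant under $M\mapsto M^{\perp}$) and repeating the argument for the pairs $(G(\cdot)+M_0)^{\perp}$ would give both the closedness of $G(\mathcal{S})+M_0$ and $\mathrm{def}\,\mathcal{S}\le\mathrm{def}\,\mathcal{T}$, i.e., $\mathcal{S}$ is (semi-)Fredholm. For the index, once $\mathcal{S}$ is known semi-Fredholm on a whole gap-ball about $\mathcal{T}$, the integer $\ind\mathcal{S}$ depends continuously on $\mathcal{S}$ and is therefore locally constant; running the one-sided estimates in both directions on a possibly smaller ball pins $\nul\mathcal{S}$ and $\mathrm{def}\,\mathcal{S}$ down to their values at $\mathcal{T}$, and so produces some $\delta>0$ with $\hat{\delta}(\mathcal{S},\mathcal{T})<\delta$ implying $\ind\mathcal{S}=\ind\mathcal{T}$. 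In the Hilbert-space case the infima in the transversality computation are realized by orthogonal projections and the gap/annihilator identities become exact, so no loss of constant occurs and the full threshold $\delta=\gamma(1+\gamma^{2})^{-1/2}$ is admissible.

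The hard part will be the Banach-space bookkeeping in the third step: lacking orthogonal complements, one must extract the sharp bounds $\nul\mathcal{S}\le\nul\mathcal{T}$, $\mathrm{def}\,\mathcal{S}\le\mathrm{def}\,\mathcal{T}$ and the closedness of $\mathcal{R}(\mathcal{S})$ purely from the gap inequalities \eqref{dineq}, the duality behavior of the gap, and the transversality estimate, tracking the constants closely enough that the admissible radius comes out exactly $\gamma(1+\gamma^{2})^{-1/2}$ and is genuinely sharp in the Hilbert setting. (This is of course Kato's theorem, quoted here only as a tool; the sketch merely records the structure one would reconstruct.)
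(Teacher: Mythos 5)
This statement is quoted verbatim from Kato's book (Theorem IV.5.17 of \cite{Kat80}, cited in the paper with a page reference); the paper supplies no proof of its own, so there is no internal argument to compare against. Your sketch reconstructs the standard route: pass to the pair of closed subspaces $(G(\mathcal{T}),\,X\times\{0\})$ in $Z=X\times Y$, note that $\mathcal{T}$ is (semi-)Fredholm exactly when this is a (semi-)Fredholm pair with $\nul\mathcal{T}=\dim\big(G(\mathcal{T})\cap(X\times\{0\})\big)$ and $\mathrm{def}\,\mathcal{T}=\mathrm{codim}\big(G(\mathcal{T})+X\times\{0\}\big)$, and then invoke gap-stability for such pairs. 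The identities in your first step are correct, and so is the key computation behind the constant: for $z=(u,\mathcal{T}u)$ one has $\mathrm{dist}(z,X\times\{0\})=\|\mathcal{T}u\|$ and $\mathrm{dist}(z,\ker\mathcal{T}\times\{0\})=\big(\mathrm{dist}(u,\ker\mathcal{T})^2+\|\mathcal{T}u\|^2\big)^{1/2}$, so the ratio of these two distances is bounded below by $\gamma(1+\gamma^2)^{-1/2}$. That is indeed where the threshold comes from.

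Two caveats, one of them substantive. The step deducing $\nul\mathcal{S}\leq\nul\mathcal{T}$ as you phrase it loses a constant: for a unit vector $z\in G(\mathcal{S})\cap(X\times\{0\})$ and a nearby $z'\in G(\mathcal{T})$ with $\|z-z'\|\leq\epsilon$, the triangle inequality only yields $\mathrm{dist}\big(z,\ker\mathcal{T}\times\{0\}\big)\leq\epsilon(1+1/\kappa)$ with $\kappa:=\gamma(1+\gamma^2)^{-1/2}$, so the condition ``distance $<1$'' needed for the dimension comparison forces $\epsilon<\kappa/(1+\kappa)$, which is strictly smaller than the threshold $\kappa$ asserted in the statement. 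Reaching the sharp constant (and its exactness in the Hilbert case) is precisely the delicate part of Kato's own argument; the abstract stability theorem for Fredholm pairs of subspaces in his Chapter IV is itself stated only with an unspecified $\delta>0$. In addition, the closedness of $\cR(\mathcal{S})$, the bound $\mathrm{def}\,\mathcal{S}\leq\mathrm{def}\,\mathcal{T}$ via annihilators, and the local constancy of the index are deferred as ``routine,'' although they constitute most of the work. None of this affects the paper, which applies the theorem only in the limit $\hat{\delta}(\cL^\epsilon-\lambda,\cL-\lambda)\to 0$, where any positive threshold suffices; but as a proof of the statement as written, the sketch does not yet deliver the claimed constant.
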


\subsection{Location of $\sigma_\delta$}

After these preparations, we are ready to state and prove the main result 
of this section.
\begin{theorem}
\label{alaizq}
Let $\varphi  = \varphi(x)$ be a diffusion-degenerate Fisher-KPP front, and let $\cL \in \ccC(L^2)$ 
be the linearized operator around $\varphi$ defined in \eqref{opL}. Then,
\[
\spd(\cL) \subset \C \backslash \Omega,
\]
where $\Omega$ is defined in \eqref{omegacs}.
\end{theorem}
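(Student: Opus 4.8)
The plan is to fix $\lambda \in \Omega$ and prove that $\lambda \notin \spd(\cL)$. First I would observe that, since $f'(u_+) = f'(0) > 0$ by hypothesis \eqref{Fisherf}, the region of consistent splitting satisfies $\Omega \subset \{\Re \lambda > 0\}$, hence $\Omega \cap (-\infty, 0] = \emptyset$; combined with the point-spectral stability of Theorem \ref{thmptstab}, namely $\ptsp(\cL) \subset (-\infty,0]$, this gives that $\cL - \lambda$ is injective for every $\lambda \in \Omega$, that is, $\nul(\cL - \lambda) = 0$. Consequently, to show $\lambda \notin \spd(\cL)$ it remains only to rule out the possibility that $\cR(\cL - \lambda)$ is simultaneously closed and proper in $L^2(\R;\C)$. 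I would argue this by contradiction.

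So suppose $\cR(\cL - \lambda)$ is closed but $\cR(\cL-\lambda) \neq L^2(\R;\C)$, so that $\mathrm{def}(\cL - \lambda) \geq 1$. Together with $\nul(\cL-\lambda)=0$, this makes $\cL - \lambda$ semi-Fredholm with
\[
\ind(\cL - \lambda) = \nul(\cL - \lambda) - \mathrm{def}(\cL - \lambda) = -\,\mathrm{def}(\cL - \lambda) \le -1 .
\]
The crucial consequence of the closedness of $\cR(\cL-\lambda)$ is that the reduced minimum modulus $\gamma := \gamma(\cL - \lambda)$ is strictly positive (cf. \cite{Kat80}).

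Next I would invoke the generalized convergence $\cL^\epsilon - \lambda \to \cL - \lambda$ of Lemma \ref{lemconv}, which together with the inequalities \eqref{dineq} yields $\hat{\delta}(\cL^\epsilon - \lambda, \cL - \lambda) \to 0$ as $\epsilon \to 0^+$. Fixing $\epsilon > 0$ small enough that $\hat{\delta}(\cL^\epsilon - \lambda, \cL - \lambda) < \gamma (1+\gamma^2)^{-1/2}$, I would apply Kato's perturbation theorem (Theorem \ref{thmindexKato}) with $\mathcal{T} = \cL - \lambda$ and $\mathcal{S} = \cL^\epsilon - \lambda$; since $L^2(\R;\C)$ is a Hilbert space the threshold $\delta$ there can be taken equal to $\gamma(1+\gamma^2)^{-1/2}$, so this choice of $\epsilon$ gives that $\cL^\epsilon - \lambda$ is semi-Fredholm with $\ind(\cL^\epsilon - \lambda) = \ind(\cL - \lambda) \le -1$. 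But $\lambda \in \Omega$, so Lemma \ref{lemLep} asserts $\ind(\cL^\epsilon - \lambda) = 0$ — a contradiction. Hence $\cR(\cL-\lambda)$ closed forces $\cR(\cL - \lambda) = L^2(\R;\C)$, so $\lambda \notin \spd(\cL)$, and since $\lambda \in \Omega$ was arbitrary, $\spd(\cL) \subset \C \setminus \Omega$.

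I expect the main obstacle to be conceptual rather than computational: one cannot claim that $\cL - \lambda$ is itself Fredholm of index zero on $\Omega$, because its range may genuinely fail to be closed (this is precisely the behaviour that will populate the set $\sigma_\pi$), so the generalized-convergence machinery cannot be run on $\cL - \lambda$ directly. The contradiction hypothesis that $\cR(\cL - \lambda)$ is closed is exactly what is needed to secure $\gamma(\cL - \lambda) > 0$, which is the quantitative threshold in Kato's stability theorem; without it the estimate $\hat{\delta}(\cL^\epsilon - \lambda, \cL - \lambda) \to 0$ would carry no information. The injectivity of $\cL - \lambda$ on $\Omega$ — which forces the index to be strictly negative once $\mathrm{def} \ge 1$, and thus clashes with Lemma \ref{lemLep} — rests on the already-established Theorem \ref{thmptstab}.
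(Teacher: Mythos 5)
Your proposal is correct and follows essentially the same route as the paper's proof: identify $\spd(\cL)$ with semi-Fredholm points of nonzero index, use closedness of the range to get $\gamma(\cL-\lambda)>0$, invoke the generalized convergence of $\cL^\epsilon-\lambda$ together with Kato's stability theorem to transfer the index, and contradict Lemma \ref{lemLep}. The only (harmless) difference is that you derive injectivity on $\Omega$ from Theorem \ref{thmptstab}, whereas injectivity is already built into the definition of $\spd$, so the paper simply assumes $\lambda\in\spd(\cL)\cap\Omega$ and reads off $\nul(\cL-\lambda)=0$ directly.
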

\begin{proof}
 First observe that
 \[
  \spd(\cL) \subset \{ \lambda \in \C \, : \, \cL-\lambda \, \text{is semi-Fredholm with } \ind (\cL - 
\lambda) \neq 0\}.
 \]
Indeed, by definition if $\lambda \in \spd(\cL)$ then $\cL - \lambda$ is injective, $\cR(\cL-\lambda)$ is 
closed and $\cR(\cL-\lambda) \subsetneqq Y=L^2$. Hence, $\nul(\cL - \lambda) = 0$ and $\cL - \lambda$ is 
semi-Fredholm. Moreover, since $\mathrm{def} \, (\cL - \lambda) = \mathrm{codim} \, \cR(\cL-\lambda) > 0$, we 
have 
that $\ind(\cL-\lambda) \neq 0$.

Now, let us suppose that $\lambda \in \spd(\cL) \cap \Omega$. Since $\cL-\lambda$ is semi-Fredholm, 
$\cR(\cL-\lambda)$ is closed, and this implies that
\[
 \gamma:= \gamma(\cL-\lambda) > 0,
\]
(see Theorem 5.2 in \cite{Kat80}, pg. 231). By Lemma \ref{lemconv}, $\hat{\delta}(\cL^\epsilon - \lambda, \cL 
- \lambda) \to 0$ as $\epsilon \to 0^+$, so we can find $\epsilon > 0$ sufficiently small such that
\[
 \hat \delta(\cL^\epsilon - \lambda, \cL 
- \lambda) < \gamma (1+\gamma^2)^{1/2}.
\]
Since $X = L^2(\R;\C)$ is a Hilbert space, Theorem \ref{thmindexKato} implies that
\[
 \ind(\cL - \lambda) = \ind(\cL^\epsilon - \lambda) = 0,
\]
in view of $\lambda \in \Omega$ and Lemma \ref{lemLep}. This is a contradiction with $\ind(\cL - \lambda) 
\neq 0$. We conclude that 
$\spd(\cL) \subset \C \backslash \Omega$, as claimed.
\end{proof}

\begin{remark}
\label{remstatg}
We observe that the location of $\sigma_\delta(\cL)$ depends upon the sign of $f'(u_\pm)$, as 
it lies to the left of the region of consistent splitting $\Omega$. Since $f'(0) > 0$, the subset 
$\sigma_\delta(\cL)$ of the compression spectrum is unstable. It is, however, sensitive to changes at spatial 
infinity, and it is possible to find exponentially weighted spaces where spectral stability does hold.
\end{remark}

\section{Spectral stability in exponentially weighted spaces}
\label{secweighted}

In this section we prove Theorem \ref{mainthm}. The key ingredient is to find a suitable exponentially 
weighted space in which the spectrum of the linearized operator, computed with respect to the new space, is 
stable. This is accomplished provided that certain conditions on the velocity hold.

\subsection{Exponentially weighted spaces}
\label{secexwesp}
It is well-known \cite{KaPro13,San02} that the Fredholm borders of the compression spectrum 
$\sigma_\delta(\cL)$ may move when the eigenvalue problem is recast in an exponentially weighted space. We 
introduce, according to custom, the function spaces
\[
H^m_a(\R;\C) = \{ v \, : \, e^{ax} v(x) \in H^m(\R;\C)\},
\]
for $m \in \Z$, $m\geq 0$, and any $a \in \R$. The latter are Hilbert spaces endowed with the inner product (and norm),
\[
\langle u,v \rangle_{H^m_a} := \langle e^{ax}u, e^{ax}v \rangle_{H^m}, \qquad \| v \|^2_{H^m_a} := \| e^{ax}v 
\|^2_{H^m} = \langle v,v \rangle_{H^m_a}.
\]
As usual, we denote $L_a^2(\R;\C) = H^0_a(\R;\C)$. Clearly, the norms $\| \cdot \|_{L^2_a}$ for different 
values of $a$ are not equivalent. If we consider $\cL$ as an operator acting on $L_a^2$,
\[
\cL : \cD_a := H^2_a(\R;\C) \subset L^2_a(\R;\C) \, \to \, L^2_a(\R;\C),
\]
and compute its spectrum with respect to the new space for an appropriate value of $a$, then it is known 
\cite{San02} that the Fredholm borders move depending on the sign of $a$ (if $a > 0$, for example, then the 
$\| \cdot \|_{L^2_a}$-norm penalizes perturbations at $-\infty$ while it tolerates exponentially growing 
perturbations at $+\infty$ at any rate less than $a > 0$, shifting, in this fashion, the Fredholm borders to 
the left), whereas the point spectrum is unmoved (cf. \cite{KaPro13}). 

Finally, it is well-known that the analysis of the spectrum of the operator $\cL$ on the space $L^2_a$ is 
equivalent to that of a conjugated operator,
\[
\cL_a := e^{ax} \cL e^{-ax} \, : \, \cD = H^2(\R;\C) \subset L^2(\R;\C) \to L^2(\R;\C),
\]
acting on the original unweighted space. If $\varphi$ is a traveling front and $\cL$ is the associated 
linearized operator acting on $L^2$ defined in \eqref{opL}, then after simple calculations we find that, for 
any $a \in \R$, the associated conjugated operator $\cL_a$ is given by
\begin{equation}
\label{conjop}
\begin{aligned}
\cL_a &: \cD(\cL_a) := H^2(\R;\C) \subset L^2(\R;\C) \to L^2(\R:\C),\\
\cL_a u &= D(\varphi)u_{xx} + \Big( 2 D(\varphi)_x - 2a D(\varphi) + c\Big) u_x + \\ & \quad + \Big( a^2 
D(\varphi) - 2a D(\varphi)_x - ac + D(\varphi)_{xx} + f'(\varphi) \Big)u, 
\end{aligned}
\end{equation}
for all $u \in H^2$. $\cL_a$ is clearly a closed, densely defined operator in $L^2$.

\subsection{Calculation of the Fredholm curves}

In order to analyze how the Fredholm borders limiting $\sigma_\delta$ move under the influence of the weight 
function $e^{ax}$, let us consider the regularized conjugated operator $\cL_a^\epsilon$ acting on $L^2$, for 
$0 < \epsilon \ll 1$, small (which results from substituting $D(\varphi)$ by $D^\epsilon(\varphi) = D(\varphi) 
+ \epsilon$ in \eqref{conjop}; see section \ref{secregop}). Thus, if for any $\lambda \in \C$ we define 
$\mathcal{J}^\epsilon_a(\lambda) := D^\epsilon(\varphi)^{-1}(\cL_a^\epsilon - \lambda)$, then its explicit 
expression is
\[
\mathcal{J}^\epsilon_a(\lambda) u = u_{xx} + D^\epsilon(\varphi)^{-1} b^\epsilon_{1,a}(x) u_x + 
D^\epsilon(\varphi)^{-1} (b^\epsilon_{0,a}(x)  - \lambda) u,
\]
with
\[
\begin{aligned}
b^\epsilon_{1,a}(x) &:= 2D^\epsilon(\varphi)_x + c - 2a D^\epsilon(\varphi),\\
b^\epsilon_{0,a}(x) &:= a^2D^\epsilon(\varphi) - 2aD^\epsilon(\varphi)_x -ac + D^\epsilon(\varphi)_{xx} + 
f'(\varphi).
\end{aligned}
\]

Like in section \ref{secregop}, we recast the spectral problem as a first order system of the form
\[
W_x = \A^\epsilon_a(x,\lambda) W,
\]
where
\[
W = \begin{pmatrix} u \\ u_x
\end{pmatrix} \in H^2(\R;\C^2), \quad 
\A^\epsilon_a (x,\lambda) = \begin{pmatrix} 0 & 1 \\ D^\epsilon(\varphi)^{-1} (\lambda - b^\epsilon_{0,a}(x)) 
& -D^\epsilon(\varphi)^{-1} b^\epsilon_{1,a}(x)
\end{pmatrix}.
\]
Since,
\[
\begin{aligned}
\lim_{x \to \pm \infty} b^\epsilon_{0,a}(x) &= a^2 D^\epsilon(u_\pm) - ac + f'(u_\pm) =: 
b^{\epsilon,\pm}_{0,a},\\
\lim_{x \to \pm \infty} b^\epsilon_{1,a}(x) &= c - 2aD^\epsilon(u_\pm) =: b^{\epsilon,\pm}_{1,a},
\end{aligned}
\]
the constant coefficients of the asymptotic systems can be written as
\[
\A^{\epsilon,\pm}_a(\lambda) = \begin{pmatrix}
0 & 1 \\ D^\epsilon(u_\pm)^{-1} (\lambda - b^{\epsilon,\pm}_{0,a}) & - D^\epsilon(u_\pm)^{-1} 
b^{\epsilon,\pm}_{1,a}
\end{pmatrix}.
\]
Let us denote $\pi^{\epsilon,\pm}_a (\lambda,z) = \det (\A^{\epsilon,\pm}_a(\lambda) - zI)$, so that, for each 
$k \in \R$
\[
\pi^{\epsilon,\pm}_a (\lambda,ik) = -k^2 + ik(c D^\epsilon(u_\pm)^{-1} - 2a) + a^2 - D^\epsilon(u_\pm)^{-1} 
(\lambda + ac - f'(u_\pm)).
\]
Thus, the Fredholm borders, defined as the $\lambda$-roots of $\pi^{\epsilon,\pm}_a(\lambda, ik) = 0$, are 
given by
\begin{equation*}
\lambda^{\epsilon,\pm}_a(k) := D^\epsilon(u_\pm) (a^2 - k^2) -ac + f'(u_\pm) + ik(c - 2a D^\epsilon(u_\pm)), 
\quad k \in \R.
\end{equation*}
Notice that
\[
\max_{k \in \R} \, \Re \lambda^{\epsilon,\pm}_a(k) = D^\epsilon(u_\pm) a^2 -ac + f'(u_\pm);
\]
therefore we denote the region of consistent splitting for each $a \in \R$ and $\epsilon \geq 0$ as
\[
\Omega(a,\epsilon) = \left\{ \lambda \in \C \, : \, \Re \lambda > \max \, \{ D^\epsilon(u_+) a^2 -ac + 
f'(u_+), \, D^\epsilon(u_-) a^2 -ac + f'(u_-)\} \right\}.
\]

By a similar argument as in section \ref{secparab} (see Lemma \ref{lemconsplit}), for each $\lambda \in 
\Omega(a,\epsilon)$ the coefficient matrices $\A^{\epsilon\pm}_a(\lambda)$ have no center eigenspace and $\dim 
S^{\epsilon,\pm}_a(\lambda) = \dim U^{\epsilon,\pm}_a(\lambda) =1$, where $S^{\epsilon,\pm}_a(\lambda)$ and 
$U^{\epsilon,\pm}_a(\lambda)$ denote the stable and unstable eigenspaces of $\A^{\epsilon\pm}_a(\lambda)$, 
respectively.

Moreover, by taking the limit as $\epsilon \to 0^+$, we claim that
\begin{equation}
\label{contention}
\sigma_\delta(\cL_a) \subset \C \backslash \Omega(a),
\end{equation}
where $\Omega(a) := \Omega(a,0)$. Indeed, using the same arguments as in the proof of Lemma \ref{lemLep}, we 
conclude that, for $\lambda \in \Omega(a,\epsilon)$, $\cL_a^\epsilon - \lambda$ is Fredholm in $L^2$ with 
index zero. By keeping $a \in \R$ 
constant, one can verify that the operators $\cL^\epsilon_a - \lambda$ converge in generalized sense to 
$\cL_a 
- \lambda$ as $\epsilon \to 0^+$ (see Lemma \ref{lemconv}; we omit the details as the proof is not only 
analogous, but the same). Furthermore, by the same arguments as in the proof of Theorem \ref{alaizq}, for $0 
< 
\epsilon \ll 1$ sufficiently small the Fredholm properties of $\cL^\epsilon_a - \lambda$ and $\cL_a - 
\lambda$ 
are the same. 

Suppose that $\lambda \in \sigma_\delta(\cL_a) \cap \Omega(a,\epsilon)$, with $0 < \epsilon \ll 1$, small. 
Then, by following the proof of Theorem \ref{alaizq}, we find that $\mathrm{ind} (\cL_a - \lambda) = 
\ind(\cL_a^\epsilon -\lambda) = 0$, and at the same time, that $\mathrm{ind}(\cL_a - \lambda) \neq 0$ because 
$\sigma_\delta(\cL_a)$ is contained in the set of complex $\lambda$ for which $\cL_a - \lambda$ is 
semi-Fredholm with non-zero index. This is a contradiction, which yields $\sigma_\delta(\cL_a) \subset \C 
\backslash \Omega(a,\epsilon)$ for all $0 < \epsilon \ll 1$ sufficiently small. By continuity, taking the 
limit as $\epsilon \to 0^+$ we obtain $\sigma_\delta(\cL_a) \subset \C \backslash \Omega(a)$, as claimed.

Henceforth, it suffices to choose $a \in \R$ appropriately in order to stabilize $\sigma_\delta(\cL_a)$.

\subsection{Choice of the weight $a \geq 0$}
\label{secrproof}

In the Fisher-KPP case, $u_+ = 0$, $u_-=1$, with $f'(0) > 0$ and $f'(1) < 0$. The fronts travel with any speed 
$c > 
c_* > 0$. Under these conditions we have
\[
 \Omega(a) = \{ \lambda \in \C \, : \, \Re \lambda > \max \{ f'(0) - ac, \, D(1) a^2 - ac + f'(1)\} \, \},
\]
for any $a \in \R$. In view of \eqref{contention}, in order to have spectral stability we need to find $a \in \R$ such that $f'(0) - ac < 0$ 
and $p(a) := D(1)a^2 - ac + f'(1) < 0$, simultaneously. Since $p(0) = f'(1) < 0$ we have that $p(a) < 0$ for 
all $a \in [0, a_0)$, where $a_0$ is the first positive root of $p(a) = 0$, that is,
\[
 a_0(c) = (2 D(1))^{-1} \big( c + \sqrt{c^2 - 4D(1)f'(1)} \, \big).
\]
We need to find $a$ such that $0 < f'(0)/c < a < a_0(c)$. Thus, it is necessary and sufficient that 
$a_0(c) > f'(0)/c$. (This imposes a condition on the speed $c$.) It is easy to verify, by elementary computations, that this is true 
provided that the front travels with speed $c$ such that
\begin{equation}
 \label{Fdfast}
 c^2 > \frac{D(1)f'(0)^2}{f'(0)-f'(1)}.
\end{equation}

Therefore, we have shown that if the speed $c > c_*$ satisfies condition \eqref{Fdfast} then we can always choose $a \in \R$ such that
\begin{equation}
\label{conda}
0 < \frac{f'(0)}{c} < a < (2 D(1))^{-1} \big( c + \sqrt{c^2 - 4D(1)f'(1)} \, \big).
\end{equation}

Hence we have proved the following
\begin{lemma}
\label{lemsdel}
For any diffusion-degenerate monotone Fisher-KPP front, traveling with speed $c \in \R$ satisfying
\begin{equation}
\label{condFd2}
c > \max \Big\{ \, c_*, \, \,\frac{f'(0) \sqrt{D(1)}}{\sqrt{f'(0) - f'(1)}} \,\Big\} > 0,
\end{equation}
we can choose an appropriate weight $a \in \R$, satisfying condition \eqref{conda}, such that the linearized operator around the front satisfies
\begin{equation}
\label{stabsigdel}
\sigma_\delta(\cL_a)_{|L^2} = \sigma_\delta(\cL)_{|L^2_a} \subset \{ \lambda \in \C \, : \, \Re \lambda < 0 \}.
\end{equation}
\end{lemma}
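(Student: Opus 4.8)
The plan is to reduce the claim to an elementary inequality on $c$ by combining the containment \eqref{contention} for the conjugated operator $\cL_a$ with the explicit form of the region of consistent splitting in the Fisher-KPP case.

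First I would observe that, by \eqref{contention}, $\sigma_\delta(\cL_a)\subset\C\backslash\Omega(a)$, so it is enough to pick $a\in\R$ for which $\{\lambda\in\C:\Re\lambda\geq0\}\subset\Omega(a)$. By the description of $\Omega(a)$ recorded above, this holds exactly when $f'(0)-ac<0$ and $p(a):=D(1)a^2-ac+f'(1)<0$ simultaneously. Since $p$ is a convex parabola in $a$ with $p(0)=f'(1)<0$, its roots $a_-<0<a_0(c)$ straddle the origin, with $a_0(c)=(2D(1))^{-1}\big(c+\sqrt{c^2-4D(1)f'(1)}\big)$; hence $p(a)<0$ for every $a\in[0,a_0(c))$. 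The condition $f'(0)-ac<0$ is, since $c>0$, equivalent to $a>f'(0)/c>0$. Thus a suitable $a$ exists precisely when the interval $(f'(0)/c,a_0(c))$ is nonempty, i.e. when $f'(0)/c<a_0(c)$, and any $a$ in that interval is exactly one satisfying \eqref{conda}.

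Next I would check, by a short direct computation (isolate the square root in $f'(0)/c<a_0(c)$, dispose of the trivial case where the left-hand side is nonpositive, and square in the remaining case), that $f'(0)/c<a_0(c)$ is equivalent to \eqref{Fdfast}, and that \eqref{Fdfast} is in turn equivalent to $c>f'(0)\sqrt{D(1)}/\sqrt{f'(0)-f'(1)}$ because $f'(0)-f'(1)>0$ and $c>0$; together with $c>c_*$ this is precisely \eqref{condFd2}. For such a choice of $a$ one then has $\sigma_\delta(\cL_a)\subset\C\backslash\Omega(a)\subset\{\lambda\in\C:\Re\lambda<0\}$, which is \eqref{stabsigdel}.

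Finally, the identity $\sigma_\delta(\cL_a)_{|L^2}=\sigma_\delta(\cL)_{|L^2_a}$ is the standard observation that $u\mapsto e^{ax}u$ is an isometric isomorphism of $L^2_a$ onto $L^2$ taking $\cD_a=H^2_a$ onto $H^2$ and conjugating $\cL-\lambda$ on $L^2_a$ into $\cL_a-\lambda$ on $L^2$, so that injectivity, closedness of the range, and the (co)dimensions of kernel and range are all preserved; hence membership in the set $\sigma_\delta$ is preserved. I expect no genuine obstacle here: the only points requiring care are keeping the inequalities strict so that the admissible interval for $a$ is open and nonempty, and tracking the algebraic equivalence between \eqref{Fdfast} and \eqref{condFd2}, both of which are routine.
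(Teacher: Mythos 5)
Your proposal is correct and follows essentially the same route as the paper: apply the containment \eqref{contention}, reduce to the two inequalities $f'(0)-ac<0$ and $D(1)a^2-ac+f'(1)<0$, locate the positive root $a_0(c)$ of the parabola, and observe that a suitable $a$ exists iff $f'(0)/c<a_0(c)$, which the elementary squaring argument shows is equivalent to \eqref{Fdfast} and hence to \eqref{condFd2}. The only addition beyond the paper's text is your explicit justification of $\sigma_\delta(\cL_a)_{|L^2}=\sigma_\delta(\cL)_{|L^2_a}$ via the isometry $u\mapsto e^{ax}u$, which the paper takes as standard; that argument is correct.
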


Now, since the point spectrum is unmoved under conjugation (see Remark 3.1.17 in \cite{KaPro13}, pg. 54), and in view of Theorem \ref{thmptstab}, we immediately have the following
\begin{proposition}\label{propunmoved}
For the monotone traveling fronts under consideration, we have
\begin{equation}
\label{stabsigpta}
\ptsp(\cL_a)_{|L^2} = \ptsp (\cL)_{|L^2} \subset (-\infty, 0],
\end{equation}
for any $a \in \R$ and where $\cL$ is the linearized operator around the front acting on $L^2$.
\end{proposition}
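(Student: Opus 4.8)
The plan is to deduce the statement from two facts already established: the location of the (unweighted) point spectrum obtained in Theorem~\ref{thmptstab}, together with the invariance of the point spectrum under conjugation by the exponential weight. No new computation is required; what has to be argued is why the two halves of \eqref{stabsigpta} hold.

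First I would record the conjugation correspondence. Since $\cL_a = e^{ax}\cL e^{-ax}$, for every $\lambda\in\C$ one has $(\cL_a-\lambda)u = e^{ax}(\cL-\lambda)(e^{-ax}u)$ for $u\in H^2(\R;\C)$; equivalently, the multiplication operator $u\mapsto e^{ax}u$ is an isometric isomorphism of $L^2_a(\R;\C)$ onto $L^2(\R;\C)$ which carries $\cD_a=H^2_a$ onto $\cD=H^2$ and intertwines $\cL$ (acting on $L^2_a$) with $\cL_a$ (acting on $L^2$). Hence a nonzero $u\in H^2$ solves $\cL_a u=\lambda u$ if and only if $v:=e^{-ax}u\neq 0$ solves $\cL v=\lambda v$ in $H^2_a$, so that $\ptsp(\cL_a)_{|L^2}=\ptsp(\cL)_{|L^2_a}$.

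It then remains to see that the weighted and unweighted point spectra agree, $\ptsp(\cL)_{|L^2_a}=\ptsp(\cL)_{|L^2}$. This is the standard fact that, in contrast with the Fredholm (essential) boundaries, the point spectrum does not move under the exponential conjugation — see Remark~3.1.17 in \cite{KaPro13} — and it reduces to the statement that an element of $\ker(\cL-\lambda)$ lies in one of the spaces $H^2$, $H^2_a$ precisely when it lies in the other, a consequence of the exponential localization of eigenfunctions governed by the asymptotic structure of the front (Section~\ref{secstructure}) and of the spectral equation (Appendix~\ref{secsoldecay}). Granting this, one combines the two identities with Theorem~\ref{thmptstab} to get $\ptsp(\cL_a)_{|L^2}=\ptsp(\cL)_{|L^2}\subset(-\infty,0]$, which is exactly \eqref{stabsigpta}. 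The one nontrivial ingredient — and thus the main point — is the weight-invariance of the point spectrum; the energy estimate of Section~\ref{secenergy} could in principle be attempted directly on $\cL_a$, but the extra advection and zeroth-order terms in \eqref{conjop} prevent the Sturm--Liouville reduction from closing, so the conjugation route is the natural one.
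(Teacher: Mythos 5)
Your argument is correct and coincides with the paper's: the proposition is obtained by combining the conjugation identity $\ptsp(\cL_a)_{|L^2}=\ptsp(\cL)_{|L^2_a}$ with the invariance of the point spectrum under the exponential weight (Remark 3.1.17 of \cite{KaPro13}) and Theorem \ref{thmptstab}, the only nontrivial point being that eigenfunctions remain in $H^2$ after multiplication by $e^{ax}$, which both you and the paper settle by appealing to the decay estimates of Appendix \ref{secsoldecay} (Lemma \ref{lemdecayu}): superexponential decay on the degenerate side and the sign of $a$ on the non-degenerate side. Note that, exactly as in the paper's own remark following the proposition, this membership is really only verified for weights $a$ in the admissible range \eqref{conda} rather than literally for every $a\in\R$, but this caveat is shared with the original.
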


\begin{remark}
This invariance of the point spectrum follows from the fact that, if $u \in \cD(\cL) = H^2(\R)$ is an eigenfunction of $\cL$ associated to an eigenvalue $\lambda \in \ptsp(\cL)$, then $v = e^{ax}u$ is an eigenfunction of $\cL_a$ with same eigenvalue, as $\cL_a v = e^{ax} \cL e^{-ax} v = e^{ax} \cL u = \lambda v$. As Kapitula and Promislow \cite{KaPro13} point out, $v$ is an eigenfunction unless the essential spectrum has moved to include $\lambda$, situation in which $v$ generically does not decay at $x=+\infty$ or at $x = -\infty$. In our case, the choice of $a \in \R$ satisfying \eqref{conda} (in particular, $a > 0$) clearly implies that $v = e^{ax}u \in H^2(-\infty,x_0)$ for any $x_0 > 0$, whenever $u \in H^2$. On the degenerate side, as $x \to +\infty$, we have precise information on the decaying behavior of eigenfunctions provided by Lemma \ref{lemdecayu} in the Appendix; such eigenfunctions can be written in the form \eqref{formuFd}, yielding $|u| \leq C \zeta$ with $\zeta \in H^2(x_0,+\infty)$.  For any $a \in \R$ satisfying \eqref{conda} (in particular, $a > f'(0)/c$) we obtain $\zeta e^{ax} \in H^2(x_0,+\infty)$, thanks to the fast decay of $\zeta$ (see Lemma \ref{lemdecayu} in the Appendix), as the dedicated reader may verify. Therefore, $v = e^{ax} u \in H^2(\R)$. 
\end{remark}

\subsection{Stability of $\sigma_\pi(\cL_a)$ and proof of Theorem \ref{mainthm}}

We are left to prove that, for our choice of exponentially weighted $L^2$-space, $\sigma_\pi(\cL_a)$ is stable as well. For that purpose let us write the operator $\cL_a : L^2 \to L^2$ as
\[
\cL_a = b_2(x) \partial_x^2 + b_1(x) \partial_x + b_0(x), \qquad \cL_a : \cD(\cL_a) = H^2(\R) \subset L^2(\R) \to L^2(\R),
\]
where,
\[
\begin{aligned}
b_2(x) &= D(\varphi),\\b_1(x) &= 2D(\varphi)_x - 2aD(\varphi) + c,\\
b_0(x) &= a^2 D(\varphi) - 2a D(\varphi)_x - ac + D(\varphi)_{xx} + f'(\varphi).
\end{aligned}
\]
Let us denote,
\[
b_j^\pm := \lim_{x \to \pm \infty} b_j(x), \qquad j = 0,1,2,
\]
yielding
\[
\begin{aligned}
b_2^+ &= 0, &  b_2^- &= D(1),\\
b_1^+ &= c, &  b_1^- &= c - 2 a D(1),\\
b_0^+ &= f'(0) - ac, &  b_0^- &= a^2 D(1) - ac + f'(1).
\end{aligned}
\]
By exponential decay of the profile we have also that
\begin{equation}
\label{bexpdec}
|b_j(x) - b_j^\pm| \leq C e^{-  \mu |x|}, \qquad \text{as } \, x \to \pm \infty,
\end{equation}
for some uniform $C,  \mu > 0$, and all $j = 0,1,2$.  Whence, let us define the piecewise constant coefficients
\begin{equation}
\label{pcwsecoef}
b_j^\infty (x) := \begin{cases}
b_j^-, & x < 0,\\ b_j^+, & x > 0,
\end{cases}
\qquad j=0,1,2,
\end{equation}
and the operator $\cL_a^\infty : \cD(\cL_a^\infty) = H^2(\R) \subset L^2(\R) \to L^2(\R)$, as
\[
\cL_a^\infty := b_2^\infty(x) \partial_x^2 + b_1^\infty(x) \partial_x + b_0^\infty(x),
\]
regarded as a closed, densely defined operator in $L^2(\R)$. Associated with it, we define the polynomials
\[
P_\pm(\xi) := \sum_{j=0}^2 b_j^\pm \xi^j, \qquad \xi \in \R,
\]
which determine the operator $\cL_a^\infty$ in the following sense. If $\mathscr{F}$ denotes  the Fourier transform, then for all $u \in C_0^\infty(\R)$,
\begin{equation}
\label{unitary}
\cL_a^\infty u = \mathscr{F}^{-1} M(P_\pm) \mathscr{F} u,
\end{equation}
where $M(P_\pm)$ denotes the maximal operator of multiplication by $P_\pm(\xi)$ (see Definition III.9.1 in \cite{EE87}, pg. 134), having domain
\[
\cD(M(P_\pm)) := \left\{ u \in L^2(\R) \, : \, \int_{-\infty}^0 |P_-(\xi) u(\xi)|^2 \, d\xi + \int_0^{+\infty} |P_+(\xi)u(\xi)|^2 \, d\xi \, < \, +\infty \right\},
\]
which is dense in $L^2(\R)$ (see Theorem III.9.2 in \cite{EE87}).

\begin{remark}
For any $u \in C_0^\infty(\R)$, at any point of continuity of $b_j^\infty(x) \partial_x^j u$, we have the representation in terms of the inverse Fourier transform given by
\[
b_j^\infty(x) \partial_x^j u (x)= \int_0^{+\infty} e^{2\pi i \xi x} (i\xi)^j b_j^+ (\mathscr{F}u)(\xi) \, d \xi +  \int_{-\infty}^0 e^{2\pi i \xi x} (i\xi)^j b_j^- (\mathscr{F}u)(\xi) \, d \xi,
\]
for all $j = 0,1,2$, except at $x = 0$, where the last integral converges to $\tfrac{1}{2}( \partial_x^j u(0^-) + \partial_x^j u(0^+) )$ (the average of the limits as $x \to 0^{\pm}$). Thus,
 \[
\mathscr{F}^{-1} M(P_\pm) \mathscr{F} u= \begin{cases}
{\displaystyle \sum_{j=0}^2 b_j^- \partial_x^j u}, & x < 0,\\ & \\ {\displaystyle \sum_{j=0}^2 b_j^+ \partial_x^j u}, & x > 0,
\end{cases}
\]
and, consequently, $\cL_a^\infty u$ and $\mathscr{F}^{-1} M(P_\pm) \mathscr{F} u$ agree a.e. in $x \in \R$, for all $u \in C_0^\infty(\R)$. Notice that, since $\mathscr{F}$ is a unitary isomorphism from $L^2(\R)$ to $L^2(\R)$, $\cL_a^\infty$ and $M(P_\pm)$ are unitarily equivalent.
\end{remark}

Let us denote now $S_\pm$ as the closure of curves in the complex plane given by
\[
S_\pm = \overline{\Big\{ \sum_{j=0}^2 b_j^\pm(i\xi)^j \, : \,  \xi \in \R \Big\}} \subset \C.
\]
$S_\pm$ consist of a finite number of algebraic curves in $\C$, more precisely,
\begin{equation}
\label{algSpm}
\begin{aligned}
S_+ &=  \overline{\Big\{ ic\xi + f'(0) - ac  \, : \, \xi \in \R \Big\}},\\
S_- &=  \overline{\Big\{ -D(1)\xi^2 + i(c - 2 aD(1)) \xi + a^2D(1) - ac + f'(1)  \, : \, \xi \in \R \Big\}}.
\end{aligned}
\end{equation}

It is a well-known fact that, in the case of differential operators with constant coefficients, their spectrum is completely determined by the dispersion curves $S_\pm$ (see, e.g., Theorem 1 in \cite{BaGa64}, or Theorem IX.6.2 in \cite{EE87}). For our case, this is exactly the content of the following proposition (we sketch its proof for completeness, following \cite{EE87} closely).

\begin{proposition}
\label{propspeccurve}
$\sigma(\cL_a^\infty) = S_+ \cup S_-$.
\end{proposition}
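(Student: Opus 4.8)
The plan is to prove the equality $\sigma(\cL_a^\infty) = S_+ \cup S_-$ by exploiting the unitary equivalence $\cL_a^\infty \simeq M(P_\pm)$, so that it suffices to compute the spectrum of the maximal multiplication operator $M(P_\pm)$ acting on $L^2(\R)$. Recall from the preceding remark that $M(P_\pm)$ is multiplication by the function $P(\xi) := P_-(\xi)$ for $\xi < 0$ and $P(\xi) := P_+(\xi)$ for $\xi > 0$, with natural (dense) domain $\cD(M(P_\pm))$. The claim then reduces to the standard fact that the spectrum of a multiplication operator equals the essential range of its symbol; here the essential range of $P$ is exactly $S_+ \cup S_-$, since $P_\pm$ are polynomials (hence continuous) and the union of their images over $\xi \in \R$, together with limit points, is precisely $\overline{\{P_+(i\xi)/i^{\,?}\}}$ — more carefully, $S_\pm$ as defined in \eqref{algSpm} are the closures of the images of $\xi \mapsto P_\pm(i\xi)$ written out explicitly.

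First I would establish the inclusion $\C \backslash (S_+ \cup S_-) \subset \rho(\cL_a^\infty)$. Fix $\lambda \notin S_+ \cup S_-$. Because $S_\pm$ are closed and $P_\pm(i\xi) \to \infty$ as $|\xi| \to \infty$ (the leading term of $P_-$ is $-D(1)\xi^2$, and $P_+$ is affine with nonzero slope $ic \neq 0$ since $c > 0$), the quantity $\inf_{\xi \in \R} |P(\xi) - \lambda|$ is strictly positive; call it $m_\lambda > 0$. Then $(P(\xi) - \lambda)^{-1}$ is a bounded measurable function with $\|(P - \lambda)^{-1}\|_\infty \leq m_\lambda^{-1}$, so multiplication by it is a bounded operator on $L^2(\R)$ that inverts $M(P_\pm) - \lambda$; transporting back through $\mathscr{F}$ shows $\lambda \in \rho(\cL_a^\infty)$ with $\|(\cL_a^\infty - \lambda)^{-1}\| \leq m_\lambda^{-1}$.

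Next I would show the reverse inclusion $S_+ \cup S_- \subset \sigma(\cL_a^\infty)$. Let $\lambda \in S_+$ (the case $\lambda \in S_-$ is identical). Then there is a sequence $\xi_n > 0$ with $P_+(i\xi_n) \to \lambda$, i.e. $|P(\xi_n) - \lambda| \to 0$. Choose a sequence of $L^2$-normalized bump functions $u_n$ supported in shrinking neighborhoods of $\xi_n$ (contained in $(0,\infty)$) on which $|P(\xi) - \lambda| \leq 2|P(\xi_n) - \lambda| + o(1)$, using continuity of $P_+$; then $\|(M(P_\pm) - \lambda)u_n\|_{L^2} \to 0$ while $\|u_n\|_{L^2} = 1$. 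This is a singular (Weyl) sequence, so $M(P_\pm) - \lambda$ is not boundedly invertible, hence $\lambda \in \sigma(M(P_\pm)) = \sigma(\cL_a^\infty)$. Taking closures (the spectrum is always closed) upgrades this to all of $S_+ \cup S_-$, including the limit points adjoined in the definition \eqref{algSpm}.

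The two inclusions together give $\sigma(\cL_a^\infty) = S_+ \cup S_-$. I do not anticipate a serious obstacle here: the only point requiring a little care is the behavior at $\xi = 0$, where the piecewise symbol $P$ is genuinely discontinuous (jump between $P_-(0) = b_0^- $ and $P_+(0) = b_0^+$). This is harmless for the spectral computation — a single point of discontinuity does not affect the essential range, and in the Weyl-sequence construction one simply takes the bumps supported strictly inside $(0,\infty)$ or $(-\infty,0)$, away from the origin. Since the paper explicitly follows \cite{EE87} (Theorems III.9.1--III.9.2 and IX.6.2 and \cite{BaGa64}), I would cite those for the multiplication-operator spectral identity rather than reproving it in full, and present only the short translation argument above.
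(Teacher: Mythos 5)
Your proof takes essentially the same route as the paper's: both reduce to the multiplication operator $M(P_\pm)$ via the unitary equivalence stated in the preceding remark, bound $|P_\pm(\cdot)-\lambda|$ from below to place $\C\setminus(S_+\cup S_-)$ in the resolvent set, and use concentrated bump (Weyl) sequences in Fourier space to show $S_+\cup S_-\subset\sigma(\cL_a^\infty)$. The only cosmetic difference is that you exhibit the bounded inverse directly as multiplication by $(P-\lambda)^{-1}$, whereas the paper combines the lower bound with density of the range (Theorem III.9.2 of \cite{EE87}); both arguments are valid, and you even share with the paper the same mild imprecision about which of $P_+$ or $P_-$ acts on a given half-line of the $\xi$-axis.
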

\begin{proof}
Let $\lambda \notin S_\pm:= S_+ \cup S_-$. Then there exists $\delta > 0$ such that $|\lambda - P_\pm(\xi)| > \delta$ for all $\xi \in \R$. Hence, for all $u \in \cD(M(P_\pm))$ (dense in $L^2$), we have
\[
\begin{aligned}
\| (M(P_\pm) - \lambda)u \|_{L^2}^2 &= \int_{-\infty}^0 |P_-(\xi) - \lambda|^2 |u(\xi)|^2 \, d\xi + \int_0^{+\infty} |P_+(\xi) - \lambda|^2 |u(\xi)|^2 \, d\xi \\ &\geq \delta^2 \|u \|_{L^2}^2.
\end{aligned}
\]

For any $\lambda \in \C$, the set $\{ \xi \in \R \, : \, P_\pm(\xi) = \lambda \}$ has Lebesgue measure equal to zero. Applying Theorem III.9.2 in \cite{EE87}, we have that $\cR( M(P_\pm) - \lambda)$ is dense in $L^2(\R)$. For any $\lambda \notin S_\pm$, $(M(P_\pm) - \lambda)^{-1}$ is closed and bounded on $\cR( M(P_\pm) - \lambda)$. Thus,  it must have a closed domain. Therefore, $\lambda \in \rho(M(P_\pm))$ or $\sigma(M(P_\pm)) \subset S_\pm$. Now take $\widetilde{\lambda} \in S_+ \cup S_-$. Then there exists $\xi_0 \in \R$ such that $\widetilde{\lambda} = P_+(\xi_0)$ or $\widetilde{\lambda} = P_-(\xi_0)$. Suppose, without loss of generality, that $\widetilde{\lambda} = P_+(\xi_0)$. Then for any given $\epsilon > 0$ there exists $\delta > 0$ such that $|P_+(\xi) - \widetilde{\lambda}| < \epsilon$ if $|\xi - \xi_0| < \delta$. Hence, any $u \in \cD(M(P_\pm))$ with support in $(\xi_0 - \delta, \xi_0 + \delta)$ satisfies
\[
\| (M(P_\pm) - \widetilde{\lambda}) u \|_{L^2}^2 = \int_{\xi_0 - \delta}^{\xi_0 + \delta} |P_+(\xi) - \widetilde{\lambda}|^2 |u(\xi)|^2 \, d\xi < \epsilon^2 \| u \|_{L^2}^2,
\] 
showing, in particular, that $\widetilde{\lambda} \in \sigma(M(P_\pm))$. Since the spectrum is a closed set, we conclude that $\sigma(M(P_\pm)) = S_+ \cup S_-$. Finally, as $\cL_a^\infty$ and $M(P_\pm)$ are unitarily equivalent, they have the same spectrum, and we obtain the result.
\end{proof}

The following lemma relates the subset of the approximate spectrum, $\sigma_\pi(\cL_a)$, to the spectrum of the asymptotic operator $\cL_a^\infty$.

\begin{lemma}
\label{lemelem}
$\sigma_\pi(\cL_a) \subset \sigma(\cL_a^\infty)$.
\end{lemma}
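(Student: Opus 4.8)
The plan is to detect $\sigma_\pi(\cL_a)$ through singular (Weyl) sequences and then to localise such a sequence near one of the two spatial ends, where $\cL_a$ is well approximated by the piecewise–constant operator $\cL_a^\infty$, whose spectrum $S_+\cup S_-$ was computed in Proposition \ref{propspeccurve}. Concretely, fix $\lambda\in\sigma_\pi(\cL_a)$. By definition $\cR(\cL_a-\lambda)$ is not closed, so (as recalled in the Remark following Remark \ref{remappcom}, cf.\ Theorems 5.10--5.11 in \cite{Kat80}) there is a singular sequence $\{u_n\}\subset H^2(\R)$ with $\|u_n\|_{L^2}=1$, $g_n:=(\cL_a-\lambda)u_n\to 0$ in $L^2$, and no $L^2$--convergent subsequence. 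We must show $\lambda\in\sigma(\cL_a^\infty)$.

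\textbf{Step 1: the mass escapes to infinity.} I claim $\|u_n\|_{L^2(-R,R)}\to 0$ for every $R>0$. On any bounded interval $\cL_a$ is uniformly elliptic, since $0<\varphi(x)<1$ and hence $D(\varphi(x))>0$ for finite $x$; the interior $H^2$--estimate then gives a uniform $H^2_{\mathrm{loc}}$ bound on $\{u_n\}$ (using $\cL_a u_n=\lambda u_n+g_n$ with $\|\lambda u_n+g_n\|_{L^2}$ bounded). By Rellich and a diagonal argument a subsequence converges in $L^2_{\mathrm{loc}}$ to some $u_*\in L^2(\R)\cap H^2_{\mathrm{loc}}$ solving $(\cL_a-\lambda)u_*=0$. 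Standard elliptic regularity on the non-degenerate end $x\to-\infty$, together with the decay analysis of Appendix \ref{secsoldecay} (Lemmas \ref{lemgoodw} and \ref{lemdecayu}, which show that every $L^2$--solution of the spectral equation actually lies in $H^2$ near the degenerate end $x\to+\infty$), yields $u_*\in H^2(\R)=\cD(\cL_a)$; since $\lambda\in\sigma_\pi(\cL_a)$ the operator $\cL_a-\lambda$ is injective, so $u_*=0$. As every subsequence admits a further subsequence with this property, $\|u_n\|_{L^2(-R,R)}\to 0$ for all $R$. Passing to a subsequence we may then assume that at least half of the $L^2$--mass escapes to $+\infty$, or at least half escapes to $-\infty$; by a pigeonhole/diagonal argument choose cutoffs $\chi_n$, $0\le\chi_n\le 1$, supported in a region drifting to the chosen end, equal to $1$ where the escaping mass concentrates, with $\|\chi_n'\|_\infty+\|\chi_n''\|_\infty\to 0$ and $\|u_n\|_{L^2(\mathrm{supp}\,\chi_n')}\to 0$, so that $v_n:=\chi_n u_n$ has $\liminf_n\|v_n\|_{L^2}>0$ and $\|v_n\|_{L^2}\le 1$.

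\textbf{Step 2: escape towards $-\infty$.} On $\mathrm{supp}\,\chi_n$ the operator $\cL_a^\infty$ has the constant coefficients $b_j^-$ and is uniformly elliptic ($b_2^-=D(1)>0$); since $\cL_a$ is also uniformly elliptic on $(-\infty,0]$, $\{u_n\}$ is bounded in $H^2(-\infty,0)$. Writing $(\cL_a^\infty-\lambda)v_n=\chi_n g_n+[\cL_a^\infty,\chi_n]u_n+\chi_n(\cL_a^\infty-\cL_a)u_n$, the first term tends to $0$; the commutator term involves $\chi_n',\chi_n''$ times $u_n,u_n'$ and tends to $0$ because $\|\chi_n'\|_\infty,\|\chi_n''\|_\infty\to0$ and $\|u_n\|_{H^2(-\infty,0)}$ is bounded; and the last term tends to $0$ by the exponential coefficient convergence \eqref{bexpdec} (the factor $e^{-\mu|x|}$ on $\mathrm{supp}\,\chi_n$ goes to $0$ uniformly). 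Hence $(\cL_a^\infty-\lambda)v_n\to0$ in $L^2$ with $\liminf\|v_n\|>0$, so $\lambda$ lies in the approximate point spectrum of $\cL_a^\infty$, whence $\lambda\in\sigma(\cL_a^\infty)$.

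\textbf{Step 3: escape towards the degenerate end $+\infty$ --- the main obstacle.} Here $\cL_a$ is no longer uniformly elliptic and $\{u_n\}$ need not be bounded in $H^2$. The saving feature is that $\cL_a^\infty$ is \emph{first order} at $+\infty$, since $b_2^+=D(0)=0$ and $b_1^+=c$, so $[\cL_a^\infty,\chi_n]u_n=c\chi_n' u_n$ carries no derivative of $u_n$. Decomposing $(\cL_a^\infty-\lambda)v_n$ as above on $\{\,\chi_n>0\,\}\subset(0,\infty)$, one finds that it equals $c\chi_n'u_n$ (which $\to0$) plus $\chi_n\big(cu_n'+(b_0^+-\lambda)u_n\big)$, and substituting $cu_n'=g_n-D(\varphi)u_n''-(b_1-c)u_n'-(b_0-\lambda)u_n$ reduces the latter to $\chi_n g_n-\chi_n D(\varphi)u_n''-\chi_n(b_1-c)u_n'+\chi_n(b_0^+-b_0)u_n$. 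The terms $\chi_n g_n$ and $\chi_n(b_0^+-b_0)u_n$ tend to $0$ by \eqref{bexpdec}, and $\chi_n(b_1-c)u_n'=\chi_n\big(2D(\varphi)_x-2aD(\varphi)\big)u_n'$ tends to $0$ because, by Lemma \ref{lemdecayFd}, $|b_1-c|=O(e^{-\mu x})$ with $\mu=f'(0)/c$, so $|b_1-c|^2D(\varphi)^{-1}=O(e^{-\mu x})$ on $\mathrm{supp}\,\chi_n$ while $\|D(\varphi)^{1/2}u_n'\|_{L^2}$ is bounded by the basic energy identity applied to $u_n$; the decay rate $\mu$ is precisely what is needed to absorb the singular factor $D(\varphi)^{-1/2}$. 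Thus everything comes down to showing
\[
\|\chi_n\, D(\varphi)\,\partial_x^2 u_n\|_{L^2}\longrightarrow 0 ,
\]
and this is where the bulk of the technical work lies: it requires the sharpest weighted a priori bounds on $u_n$ in the far field --- obtained by testing the spectral equation against $\overline{u_n}$, $\overline{D(\varphi)\partial_x^2 u_n}$ and related multipliers, and by choosing the localisation scale carefully relative to those bounds and to the exponential rates of the profile --- possibly supplemented again by the decay estimates of Appendix \ref{secsoldecay}. Once this limit is established, $v_n$ is a quasimode for $\cL_a^\infty-\lambda$ and again $\lambda\in\sigma(\cL_a^\infty)$. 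As $\lambda\in\sigma_\pi(\cL_a)$ was arbitrary, this yields $\sigma_\pi(\cL_a)\subset\sigma(\cL_a^\infty)$.
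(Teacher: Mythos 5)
Your overall strategy --- extract a singular sequence for $\cL_a-\lambda$, show its mass escapes to spatial infinity, localize it with cutoffs, and compare against the constant-coefficient asymptotic operator whose spectrum is $S_+\cup S_-$ --- is the same as the paper's. But the proof is not complete: in Step 3 you reduce everything to the claim $\|\chi_n\,D(\varphi)\,\partial_x^2 u_n\|_{L^2}\to 0$ and then explicitly leave it unproven, merely indicating that it ``requires the sharpest weighted a priori bounds.'' This is not a routine detail but the crux of the lemma: the degenerate end $x\to+\infty$ is exactly where the difficulty lies, and with only the normalization $\|u_n\|_{L^2}=1$ there is no a priori $L^2$ control of $u_n''$ (nor even of $u_n'$) there. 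Your attempted reduction via the spectral equation is circular, since $D(\varphi)u_n''=g_n-b_1u_n'-(b_0-\lambda)u_n$ reintroduces $c\,u_n'$, for which only the degenerate weighted bound $\|D(\varphi)^{1/2}u_n'\|_{L^2}\lesssim 1$ is available, and that weight vanishes precisely where the cutoff lives. (A smaller soft spot: in Step 1 the upgrade of the local limit $u_*$ from $L^2\cap H^2_{\mathrm{loc}}$ to $u_*\in H^2(\R)=\cD(\cL_a)$, needed to invoke injectivity, also requires an argument at the degenerate end; the Appendix lemmas are stated for solutions already known to be in $H^2$.)

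The paper sidesteps the Step 3 obstruction by a different normalization: it sets $v_n:=u_n/\|u_n\|_{H^2}$, so that $\|v_n\|_{H^2}=1$ uniformly, and estimates $(\cL_a^\infty-\cL_a)(\chi_n v_n)=\sum_{j}(b_j^\infty-b_j)\partial_x^j(\chi_n v_n)$ directly. The point is that $b_2^\infty-b_2=-D(\varphi)$ on $x>0$ decays exponentially by Lemma \ref{lemdecayFd}, so the second-derivative term is bounded by $Ce^{-2\mu n}\|v_n\|_{H^2}^2\to0$ with no need to show that $D(\varphi)\partial_x^2u_n$ itself is small; the final quasimodes are $w_n=\chi_n v_n/\|\chi_n v_n\|_{L^2}$. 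To complete your argument you should either adopt this $H^2$ renormalization or actually produce the weighted second-order a priori estimate you invoke; as written, Step 3 is a genuine gap.
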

\begin{proof}
Let $\lambda \in \sigma_\pi(\cL_a)$. Then, by definition, $\cR(\cL_a - \lambda)$ is not closed and we deduce the existence of a singular sequence, $u_n \in \cD(\cL_a) = H^2(\R)$, with $\| u_n \|_{L^2} = 1$ for all $n \in \N$, such that $(\cL_a - \lambda) u_n \to 0$ in $L^2$ as $n \to +\infty$, containing no convergent subsequence (see Remark \ref{remappcom}). In view that they are localized functions and that the energy $L^2$-norm is invariant under translations, $u_n(\cdot) \to u_n(\cdot + y)$, we may assume that 
\[
\|u_n \|_{L^2(|x| \geq n+1)} > 0, \qquad \forall \, n \in \N.
\]
(Indeed, since $\| u_n \|_{L^2} = 1$, there exists an open set $O = (x_0 - R(n), x_0 + R(n))$ such that $\| u_n \|_{L^2(O)} > 0$. Redefining $u_n$ by, say, the translation $u_n(\cdot) \to u_n(\cdot + n+1 - x_0 + R(n))$ we get $\|u_n\|_{L^2(n+1,+\infty)} > 0$.)

Now, since $\| u_n \|_{H^2} \geq \|u_n \|_{L^2} = 1$, let us define
\[
v_n := \frac{u_n}{\| u_n \|_{H^2}} \in \cD(\cL_a) = H^2(\R),
\]
so that $\| v_n \|_{H^2} = 1$ for all $n \in \N$ (uniformly bounded $H^2$-norm). Let $\chi_n = \chi_n(x)$ be such that $\chi_n \in C^\infty(\R)$ for all $n \in \N$,  $\chi_n(x) = 0$ for $x \in [-n,n]$, $\chi_n(x) = 1$ for $|x| \geq n+1$, and $0 \leq \chi_n \leq 1$. Notice that 
\[
\begin{aligned}
\| \chi_n v_n \|_{L^2}^2 &= \| v_n \|_{L^2(|x| \geq n+1)}^2 + \| \chi_n v_n \|^2_{L^2(n \leq |x| \leq n+1)} \\ &= \|u_n \|_{H^2}^{-2} \| u_n \|_{L^2(|x| \geq n+1)}^2 + \| \chi_n v_n \|^2_{L^2(n \leq |x| \leq n+1)} \\&> 0,
\end{aligned}
\]
so that we can define
\[
w_n(x) := \frac{\chi_n(x) v_n(x)}{\| \chi_n v_n \|_{L^2}}, \qquad x \in \R,
\]
and $w_n \in \cD(\cL_a) = H^2(\R)$. We now observe that, since $\chi_n$ is smooth, we may compute
\[
\begin{aligned}
(\cL_a^\infty - \cL_a) (\chi_n v_n) &= \sum_{j=0}^2 (b_j^\infty  - b_j ) \partial_x^j (\chi_n v_n)\\
&= (b_1^\infty  - b_1 ) (\partial_x \chi_n) v_n + (b_2^\infty  - b_2 )( v_n \partial_x^2 \chi_n + 2 \partial_x \chi_n \partial_x v_n) + \\ &\;\;\; + \sum_{j=0}^2 (b_j^\infty  - b_j ) \chi_n \partial_x^j v_n. 
\end{aligned}
\]
Therefore,
\[
\begin{aligned}
\| (\cL_a^\infty - \cL_a)(\chi_n v_n) \|_{L^2}^2 &\leq \sum_{j=0}^2 \| (b_j^\infty - b_j) \chi_n \partial_x^j v_n \|_{L^2}^2 + \\ &  \; \;+ \int\limits_{n \leq |x| \leq n+1} \!\!\! |b_1^\infty(x) - b_1(x)|^2 |\partial_x \chi_n(x)|^2 |v_n(x)|^2 \, dx \; \; + \\ &  \; \;+ \int\limits_{n \leq |x| \leq n+1} \!\!\!|b_2^\infty(x) - b_2(x)|^2 |\partial_x \chi_n(x)|^2 |\partial_x v_n(x)|^2 \, dx \; \; + \\
&  \; \;+ \int\limits_{n \leq |x| \leq n+1}\!\!\! |b_2^\infty(x) - b_2(x)|^2 |\partial_x^2 \chi_n(x)|^2 |v_n(x)|^2 \, dx.
\end{aligned}
\]

By exponential decay \eqref{bexpdec}, for each $j =0,1,2$ we have the bound
\[
\begin{aligned}
\| (b_j^\infty - b_j) \chi_n \partial_x^j v_n \|_{L^2}^2 &\leq \int\limits_{|x| \geq n} |b_j^\infty(x) - b_j(x)|^2 |\partial_x^j v_n(x)|^2 \, dx\\
&\leq C e^{-2\mu n} \|v_n\|_{H^2}^2 = C e^{-2\mu n} \; \longrightarrow 0, \qquad \text{as } \;\; n \to +\infty,
\end{aligned}
\]
inasmuch as $\| v_n \|_{H^2} = 1$ (uniformly bounded $H^2$-norm for the $v_n$'s). The integrals on the set $\{n \leq |x| \leq n+1\}$ are clearly bounded by
\[
\int\limits_{n \leq |x| \leq n+1} \!\!\! |b_j^\infty(x) - b_j(x)|^2 |\partial_x^k \chi_n(x)|^2 ( |v_n|^2 + |\partial_x v_n|^2) \, dx \leq C e^{-2\mu n} \|v_n \|_{H^2}^2 = C e^{-2\mu n} \to 0,
\]
as $n \to +\infty$, for $k = 1,2$. Therefore, we conclude that
\[
\|(\cL_a^\infty - \cL_a) (\chi_n v_n) \|_{L^2} \longrightarrow 0, \qquad \text{as } \; n \to +\infty.
\]

Finally, observe that
\[
\|(\cL_a - \lambda) (\chi_n v_n) \|_{L^2} = \| (\cL_a - \lambda) (\|u_n\|_{H^2}^{-1} \chi_n u_n) \|_{L^2} \leq \| (\cL_a - \lambda) u_n \|_{L^2} \to 0, 
\]
as $n \to +\infty$, because $0 \leq \chi_n \leq 1$, $\| u_n \|_{H^2}^{-1} \leq 1$, and $u_n$ is a singular sequence. This yields,
\[
(\cL_a^\infty - \lambda) w_n = (\cL_a^\infty - \cL_a) w_n + (\cL_a - \lambda) w_n \to 0, \qquad \text{in } \; L^2,
\]
as $n \to +\infty$. 

Therefore, we have a sequence $w_n \in \cD(\cL_a^\infty) = H^2(\R)$ such that $\| w_n \|_{L^2} = 1$ and $(\cL_a^\infty - \lambda)w_n \to 0$ in $L^2$ as $n \to +\infty$. This implies, by definition of the approximate spectrum (see Remark \ref{remappcom}), that $\lambda \in {\sigma_\mathrm{\tiny{app}}}(\cL_a^\infty) \subset \sigma(\cL_a^\infty)$, as claimed.
\end{proof}

\begin{corollary}[stability of $\sigma_\pi(\cL_a)$]
\label{corstabpi}
For any diffusion-degenerate monotone front, traveling with speed satisfying \eqref{condFd2}, and for any weight $a \in \R$ satisfying \eqref{conda}, there holds
\[
\sigma_\pi(\cL_a) \subset \{ \lambda \in \C \, : \, \Re\lambda < 0 \},
\]
where $\cL_a$ is the (conjugated) linearized operator around the front in $L^2$ given by \eqref{conjop}.
\end{corollary}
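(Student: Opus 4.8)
The plan is to combine the two structural results already at our disposal. By Lemma~\ref{lemelem} we have $\sigma_\pi(\cL_a) \subset \sigma(\cL_a^\infty)$, and by Proposition~\ref{propspeccurve} the spectrum of the asymptotic operator is exactly $\sigma(\cL_a^\infty) = S_+ \cup S_-$, the union of the dispersion curves recorded in \eqref{algSpm}. Hence it suffices to verify that, under the speed condition \eqref{condFd2} and for any weight $a$ satisfying \eqref{conda}, both curves $S_\pm$ lie in the open left half plane.

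First I would treat $S_+$. From \eqref{algSpm}, every point of $S_+$ has the form $ic\xi + (f'(0) - ac)$ with $\xi \in \R$, so $\Re\lambda = f'(0) - ac$ is constant along $S_+$ (the curve is the vertical line $\Re\lambda = f'(0)-ac$, which is already closed since $c>0$). Condition \eqref{conda} forces $a > f'(0)/c$, i.e.\ $ac > f'(0)$, whence $\Re\lambda = f'(0) - ac < 0$ at every point of $S_+$.

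Next, for $S_-$, every point has the form $-D(1)\xi^2 + i(c - 2aD(1))\xi + (a^2 D(1) - ac + f'(1))$, so that $\Re\lambda = -D(1)\xi^2 + p(a)$ with $p(a) := D(1)a^2 - ac + f'(1)$. Since $D(1) > 0$, the real part is maximized at $\xi = 0$, giving $\Re\lambda \le p(a)$ for all points of $S_-$. Now $p$ is an upward parabola in $a$ with $p(0) = f'(1) < 0$, so $p(a) < 0$ for every $a$ strictly between its two roots; in particular $p(a) < 0$ whenever $0 \le a < a_0(c) = (2D(1))^{-1}\big(c + \sqrt{c^2 - 4D(1)f'(1)}\,\big)$, which is exactly the upper bound imposed in \eqref{conda}. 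Hence $\Re\lambda \le p(a) < 0$ on $S_-$.

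Combining the two bounds gives $\sigma(\cL_a^\infty) = S_+ \cup S_- \subset \{\lambda \in \C : \Re\lambda < 0\}$, and therefore $\sigma_\pi(\cL_a) \subset \{\lambda \in \C : \Re\lambda < 0\}$, as claimed. There is no serious obstacle here: the only point worth keeping in mind is that the interval \eqref{conda} was designed precisely so that the two scalar inequalities $f'(0) - ac < 0$ and $p(a) < 0$ hold simultaneously, and that such an interval is non-empty exactly under the speed restriction \eqref{condFd2} — both facts having already been established in the discussion preceding Lemma~\ref{lemsdel}.
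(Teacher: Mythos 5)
Your proposal is correct and follows exactly the paper's route: apply Lemma~\ref{lemelem} together with Proposition~\ref{propspeccurve}, then check that conditions \eqref{condFd2} and \eqref{conda} place both dispersion curves $S_\pm$ of \eqref{algSpm} in the open left half plane. You merely spell out the elementary inequalities ($f'(0)-ac<0$ and $p(a)<0$) that the paper leaves as "clearly contained in the stable half plane," and your computations are accurate.
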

\begin{proof}
Follows directly from Lemma \ref{lemelem} and Proposition \ref{propspeccurve}, and by observing that, under condition \eqref{condFd2} for the speed, we can always choose a weight  $a \in \R$ satisfying \eqref{conda}, condition which implies, in turn, that the curves \eqref{algSpm} are clearly contained in the stable half plane. Thus, $ \sigma_\pi(\cL_a) \subset \sigma(\cL_a^\infty) = S_+ \cup S_- \subset \{ \lambda \in \C \, : \, \Re\lambda < 0 \}$. 
\end{proof}

\subsection*{Proof of Theorem \ref{mainthm}}

Under conditions \eqref{condFd2} and \eqref{conda} for the traveling speed $c \in \R$ and the exponential weight $a \in \R$, respectively, we may apply Corollary \ref{corstabpi}, Lemma \ref{lemsdel}, Proposition \ref{propunmoved} and Theorem \ref{thmptstab} to conclude that
\[
\sigma(\cL)_{|L^2_a} = \sigma(\cL_a)_{|L^2} = \ptsp(\cL_a)_{|L^2} \cup \sigma_\delta(\cL_a)_{|L^2} \cup \sigma_\pi(\cL_a)_{|L^2}  \subset \{ \lambda \in \C \, : \, \Re\lambda \leq 0 \}.
\]

This finishes the proof of the main Theorem \ref{mainthm}.

\begin{remark}
The theorem guarantees that all traveling fronts with speed satisfying condition \eqref{condFd2} are 
spectrally stable in an appropriate weighted space $L_a^2$. The condition depends on the choice of the 
reaction $f$ and on the density-dependent diffusion $D$ under consideration. For example, in the particular case 
of the diffusion-degenerate Fisher-KPP equation,
\begin{equation}
\label{k1Fisher}
u_t = (\alpha uu_x)_x + u(1-u),
\end{equation}
with $\alpha > 0$, that is, the special case of \eqref{degRD} with $D(u) = \alpha u$ and $f(u) = u(1-u)$, it 
is known (see, e.g., \cite{Aron80,GiKe04,GiKe05}) that the threshold speed is given by $c_* = 
\sqrt{\alpha/2}$. Thus, in this case,
\[
\frac{f'(0) \sqrt{D(1)}}{\sqrt{f'(0) - f'(1)}} = \frac{\sqrt{\alpha}}{\sqrt{2}} = c_*,
\]
and, by Theorem \ref{mainthm}, all traveling waves for equation \eqref{k1Fisher} with speed $c > c_*$ are 
$L_a^2$-spectrally stable for some $a \geq 0$.

There might be choices of $D$ and $f$ for which the maximum in \eqref{condFd2} is not $c_*$ and, therefore, 
the waves traveling with speed $c_* < c < f'(0) \sqrt{D(1)}/\sqrt{f'(0) - f'(1)}$ are spectrally unstable in 
any weighted space. This case is associated to the presence of absolute instabilities and the location of the leftmost limit 
of the Fredholm borders as $a$ varies (see \cite{KaPro13}, section 3.1). For example, 
Sanchez-Gardu\~{n}o and Maini \cite{SaMa94b} calculate, for the choices $D(u) = u + \varepsilon u^2$ and $f(u) 
= u(1-u)$, with $\varepsilon > 0$ small, that the threshold velocity is $c_* = (1 + \varepsilon/5)/\sqrt{2}$. 
Therefore 
\[
\max \Big\{ \, c_*, \, \,\frac{f'(0) \sqrt{D(1)}}{\sqrt{f'(0) - f'(1)}} \,\Big\} = \max \Big\{ \, 
\frac{1}{\sqrt{2}}(1 + \varepsilon/5)\, , \, \frac{\sqrt{1+\varepsilon}}{\sqrt{2}} \,\Big\} = 
\frac{\sqrt{1+\varepsilon}}{\sqrt{2}},
\]
for all small $\varepsilon > 0$. Thus, stability holds for all fronts traveling with speed $c > 
\sqrt{1+\varepsilon} / \sqrt{2}$, whereas those which are slower are spectrally unstable in any weighted 
space. This case covers the normalized Shigesada function \eqref{Dbeta} mentioned in the introduction, with $D(u) = \varepsilon (u^2 + bu)$, and $b = 1/\varepsilon > 0$ large.
\end{remark}

\section{Discussion}
\label{secdiscuss}

In this paper we have shown that the spectrum of the linearized differential operator around any 
diffusion-degenerate Fisher-KPP traveling front with speed satisfying condition \eqref{condFd2} is located in 
the stable complex half plane, $\{ \lambda \in \C \, : \, \Re \lambda \leq 0\}$, when it is computed with 
respect to an appropriate exponentially weighted $L^2$-space. In other words, if the front satisfies 
\eqref{condFd2} then we can always find a local energy space under which spectral stability holds. A few 
remarks, however, are in order. First, it is important to notice that our main result does not imply the 
existence of a ``spectral gap", that is, that $\sigma \subset \{ \Re \lambda \leq - \omega < 0\} \cup \{0\}$, 
for some $\omega > 0$. This is a limitation of the technique used. Moreover, we have not proved that there is 
accumulation of the continuous spectrum near $\Re \lambda = 0$ either. The existence 
of a spectral gap is an important issue to be resolved prior to studying the nonlinear (orbital) stability, 
inasmuch as it is well-known that such spectral gap simplifies the nonlinear study with the use 
of standard exponentially decaying semigroup tools (see, e.g., \cite{He81,KaPro13,PW94,San02} and the 
references therein). The seasoned reader might rightfully 
ask why the parabolic regularization technique (see section \ref{secparab}) is not used to locate the whole 
Weyl's essential spectrum. The answer to that question is precisely that, due to the degeneracy, it might 
happen that the reduced minimum modulus, $\gamma(\cL^\epsilon - \lambda)$, tends to zero as $\epsilon \to 
0^+$, as it does for points in $\sigma_\pi(\cL)$. Therefore, one must sort out the points for which $\cL - 
\lambda$ is a closed range operator. As a consequence, the set which must be controlled with the use of energy 
estimates, namely $\ptsp$, is larger and not necessarily composed of isolated points 
with finite multiplicity only, like in the standard approach for strictly parabolic problems. 

In addition, we conjecture that the ideas introduced here to handle the degeneracy of the 
diffusion can be applied to other circumstances. For instance, it is clear that other reaction functions might 
as well be taken into account, such as the bistable (or Nagumo) type \cite{McKe70,NAY62}; this case will be 
addressed in a companion paper \cite{LeP2}. Another possible application is the case of traveling fronts 
with doubly-degenerate diffusions, which arise naturally in bacterial aggregation models \cite{LMP1}, and 
whose existence has been already studied (cf. \cite{MalMar1,Manso10}). We believe that the analysis presented 
here can be applied to those situations as well, by taking care of the points in the argumentation where 
monotonicity of $D = D(\cdot)$ should be dropped, and by extending the methods to more orders of degeneracy. 
Finally, the stability of traveling fronts for systems with degenerate diffusion tensors is an important open 
problem whose investigation could profit from some of the ideas developed in this paper.

\section*{Acknowledgements}

The authors thank Margaret Beck for useful discussions. The work of J. F. Leyva was partially supported by 
CONACyT (Mexico) through a scholarship for doctoral studies, grant no. 220865. The work of RGP was partially 
supported by 
DGAPA-UNAM, program PAPIIT, grant IN-104814.

\appendix
\section{Proof of Lemma \ref{lemgoodw}}
\label{secsoldecay}

In this section, we verify that, for fixed $\lambda \in \C$, if $u \in H^2$ is a solution to 
the spectral equation $(\cL - \lambda)u = 0$ then
\[
w(x) = \exp \left( \frac{c}{2} \int_{x_0}^x \frac{ds}{D(\varphi(s))}  \right) u(x) = e^{-\theta(x)} u,
\]
belongs to $H^2$ as well. Here $\cL$ is the linearized operator around a traveling front $\varphi$ for the 
Fisher-KPP equation \eqref{degRD}, traveling with speed $c > c_* > 0$, and $x_0 \in \R$ is fixed but 
arbitrary. 

For Fisher-KPP diffusion-degenerate fronts one has $u_+ = 0$, $u_- = 1$, with $f'(0) > 0$, $f'(1) < 0$ and 
they are monotone decreasing with $\varphi_x < 0$. These fronts are diffusion-degenerate as $x \to +\infty$ in 
view that $D(u_+) = D(0) = 0$.

On the non-degenerate side, as $x \to -\infty$, notice that
\[
\int_{x_0}^x D(\varphi(s))^{-1} \, ds \leq 0,
\]
for all $x \leq x_0$. Therefore,
\[
1 \geq \Theta(x) := \exp  \left( \frac{c}{2} \int_{x_0}^x \frac{ds}{D(\varphi(s))}  \right),
\]
This yields $|w(x)| \leq \Theta(x) |u(x)| \leq |u(x)|$ for $x \in (-\infty, x_0)$ and, consequently, $w \in L^2(-\infty,x_0)$. Now, for $x_0 \in \R$ fixed, let
\[
\delta_0 := \inf_{x \in (-\infty, x_0]} D(\varphi(x)) > 0.
\]
Then for all $x < x_0$ we have
\[
\Theta(x) \leq \exp \Big( - \frac{c}{2\delta_0} |x-x_0|\Big) \, \to 0, \qquad \text{as} \; \, x \to - \infty.
\]
Also from monotonicity of $D = D(\cdot)$ we have
\[
0 < \frac{c}{2} \frac{\Theta(x)}{D(1)} \leq \Theta_x(x) = \frac{c}{2} \frac{\Theta(x)}{D(\varphi(x))} \leq \frac{c}{2 \delta_0} \Theta(x) \to 0,
\]
as $x \to -\infty$. We conclude that $w_x = \Theta_x u + \Theta u_x \in L^2(-\infty, x_0)$. Analogously, it can be easily verified that $w_{xx} \in L^2(-\infty, x_0)$ (we omit the details).

On the degenerate side, as $x \to +\infty$, however, observe that $\int_{x_0}^x D(\varphi(s))^{-1} \, ds \geq 0$ for all $x \geq x_0$ and, thus, the analysis of the decay at $+\infty$ of solutions to resolvent type equations is more delicate. The details are provided by the following lemma.

\begin{lemma}
\label{lemdecayu}
Suppose that $\varphi$ is a monotone, Fisher-KPP diffusion-degenerate front. Then, for any fixed $\lambda \in \C$, any $H^2$-solution $u$ to the spectral equation $(\cL - \lambda) u = 0$ can be written as
\begin{equation}
\label{formuFd}
u(x) = C \exp \left( - \frac{c}{2} \int_{x_0}^x \frac{ds}{D(\varphi(s))} \, \right) \zeta(x),
\end{equation}
for $x > x_0$, $x_0 \in \R$ fixed, $x_0 \gg 1$ sufficiently large, with some constant $C \in \C$, and where $\zeta = \zeta(x) \in H^2(x_0,+\infty)$ decays to zero as $x \to +\infty$ like
\begin{equation}
\label{decayofzeta}
\zeta(x) \sim e^{f'(0)x/2c} \exp \left( - \frac{c^2}{2D'(0)f'(0)} e^{f'(0)x/c} \right).
\end{equation}
\end{lemma}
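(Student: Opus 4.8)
The plan is to carry out a direct asymptotic analysis of the spectral ODE on the degenerate ray $x>x_0$, $x_0\gg 1$, using the exponential convergence of the profile to $u_+=0$ furnished by Lemma~\ref{lemdecayFd}. The non-degenerate side $x\to-\infty$ having already been disposed of in the discussion preceding the lemma (there $\Theta(x)\le 1$, so $w\in H^2(-\infty,x_0)$ is immediate from the exponential decay of $\varphi_x$ and the monotonicity of $D$), it is enough to establish the representation \eqref{formuFd} together with the decay \eqref{decayofzeta} on $(x_0,+\infty)$. On that ray I would first record, from \eqref{hypD} and Lemma~\ref{lemdecayFd}, the asymptotics $D(\varphi)\sim D'(0)\varphi\sim K\,e^{-f'(0)x/c}$ for a constant $K>0$, together with $D(\varphi)_x,\ D(\varphi)_{xx}=O(e^{-f'(0)x/c})$ and $f'(\varphi)=f'(0)+O(e^{-f'(0)x/c})$, and then rewrite the spectral equation in the expanded form \eqref{spectralp2}.

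The second step is to remove the degeneracy by Aronson's reparametrization $\tau=\tau(x)$, $d\tau/dx=1/D(\varphi(x))$; by the previous step $\tau(x)\to+\infty$ at the rate $e^{f'(0)x/c}$ and $D(\varphi)\sim c/(f'(0)\tau)$. Observe that the exponential weight appearing in \eqref{formuFd} is precisely $e^{\theta(x)}$ with $\theta(x)=-\tfrac c2\big(\tau(x)-\tau(x_0)\big)$ (cf.\ \eqref{gamma}), so that producing \eqref{formuFd} is the same as showing that $w:=e^{-\theta}u$ lies in $H^2(x_0,+\infty)$ with the asserted decay; equivalently one may work directly with equation \eqref{eqw}. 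Multiplying the spectral equation by $D(\varphi)$ and passing to the variable $\tau$ turns it into an equation of the form $u_{\tau\tau}+\big(D(\varphi)_x+c\big)u_\tau+D(\varphi)\big(D(\varphi)_{xx}+f'(\varphi)-\lambda\big)u=0$, whose coefficients, read as functions of $\tau$, are $c+O(\tau^{-1})$ and $O(\tau^{-1})$: the equation is an asymptotically autonomous perturbation of $u_{\tau\tau}+cu_\tau=0$ with tails that are $O(1/\tau)$ but not integrable, i.e.\ an irregular point at $\tau=\infty$ of the regular-singular-at-infinity type.

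The third step is to build, by the classical asymptotic-integration scheme (variation of parameters followed by a contraction in a suitably weighted sup norm on $[\tau_0,+\infty)$, the non-integrable $O(1/\tau)$ tails contributing only algebraic $\tau^{\beta}$ prefactors), a fundamental system $\{u_{\mathrm d},u_{\mathrm r}\}$ with, as $\tau\to+\infty$,
\[
u_{\mathrm d}(\tau)\sim \tau^{\beta_{\mathrm d}},\qquad u_{\mathrm r}(\tau)\sim \tau^{\beta_{\mathrm r}}e^{-c\tau},
\]
the exponents being fixed by the first subleading term of the coefficients (one computes $\beta_{\mathrm d}=(\lambda-f'(0))/f'(0)$, so that $u_{\mathrm d}\sim e^{(\lambda-f'(0))x/c}$ in the original variable, and a companion exponent $\beta_{\mathrm r}$). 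Undoing the substitution, $w=e^{-\theta}u=e^{\frac c2(\tau-\tau(x_0))}u$: the recessive mode yields $w\sim\tau^{\beta_{\mathrm r}}e^{-\frac c2\tau}$, which in the $x$ variable is, up to the algebraic factor absorbed in $\sim$, exactly the double-exponential rate $e^{f'(0)x/2c}\exp\!\big(-\tfrac{c^2}{2D'(0)f'(0)}e^{f'(0)x/c}\big)$ of \eqref{decayofzeta}, so that $w\in H^2(x_0,+\infty)$; the dominant mode instead yields a $w$ that blows up as $x\to+\infty$, since $e^{\frac c2\tau}$ overwhelms any algebraic factor.

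The heart of the matter — and what I expect to be the main obstacle — is the fourth step: showing that the prescribed solution $u\in H^2(\mathbb R;\mathbb C)$, decomposed near $+\infty$ as $u=c_{\mathrm d}u_{\mathrm d}+c_{\mathrm r}u_{\mathrm r}$, has $c_{\mathrm d}=0$, so that near $+\infty$ it is a multiple of the recessive solution and \eqref{formuFd}--\eqref{decayofzeta} follow with $C\zeta=w$, $\zeta\in H^2(x_0,+\infty)$. I would extract this from the integrability encoded in $u\in H^2(\mathbb R)$ together with the equation itself: in the $\tau$ variable the $L^2(dx)$ norm weights $|u|^2$ by $D(\varphi)\,d\tau\sim (c/f'(0))\,d\tau/\tau$, and one uses the equation to convert the requirement that $u,u_x,u_{xx}$ all be square integrable against this measure into a Riccati-type control on $u_\tau/u$ that forces the $u_{\mathrm d}$-component to vanish. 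Carrying this compatibility argument through, with the precise subleading expansions of $D$ and $f$ from the first step, is where essentially all of the technical work resides; the remaining bookkeeping — combining with the elementary $(-\infty,x_0)$ estimate to conclude $w\in H^2(\mathbb R)$, which is exactly Lemma~\ref{lemgoodw} — is routine.
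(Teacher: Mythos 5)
Your reductions in steps one through three are correct and take a genuinely different (though ultimately equivalent) route from the paper's: the paper factors out the known kernel element by writing $u=\varphi_x v$, passes to the Liouville normal form $z_{xx}-F(x,\lambda)z=0$ with $F\sim \tfrac{c^2}{4D'(0)^2}\varphi^{-2}\to+\infty$, and invokes Coppel's asymptotic theorem, whereas you use Aronson's reparametrization and asymptotic integration of $u_{\tau\tau}+(c+O(\tau^{-1}))u_\tau+O(\tau^{-1})u=0$. Both produce the same fundamental pair: a recessive mode which, after undoing the weight, gives exactly \eqref{decayofzeta}, and a dominant mode $u_{\mathrm{d}}\sim e^{(\lambda-f'(0))x/c}$ (your exponent $\beta_{\mathrm{d}}=(\lambda-f'(0))/f'(0)$ checks out: it is the solution of the formally reduced first-order equation $cu_x+(f'(0)-\lambda)u=0$ obtained by setting $D(\varphi)=0$).

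The genuine gap is your fourth step, and it is not merely technical: the mechanism you propose cannot work for all $\lambda\in\C$. When $\Re\lambda<f'(0)$ the dominant mode decays exponentially as $x\to+\infty$, with $u_{\mathrm{d},x}=O(u_{\mathrm{d}})$; and since $cu_{\mathrm{d},x}+(f'(0)-\lambda)u_{\mathrm{d}}$ vanishes to leading order (that is precisely the equation defining $u_{\mathrm{d}}$), the identity $u_{xx}=-D(\varphi)^{-1}\bigl[(2D(\varphi)_x+c)u_x+(D(\varphi)_{xx}+f'(\varphi)-\lambda)u\bigr]$ shows the growth of $D(\varphi)^{-1}$ is cancelled and $u_{\mathrm{d},xx}=O(u_{\mathrm{d}})$ as well, so $u_{\mathrm{d}}\in H^2(x_0,+\infty)$. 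Hence no amount of square-integrability against $D(\varphi)\,d\tau$, nor any Riccati-type control derived from it, can force $c_{\mathrm{d}}=0$ in that half-plane; the exclusion follows from $u\in L^2$ only when $\Re\lambda\ge f'(0)$, where $|u_{\mathrm{d}}|$ does not decay. You should also be aware that the paper's own proof does not supply the argument you are looking for: it computes the asymptotics of the \emph{decaying} solution $z_1$ of the normal-form equation and tacitly identifies the eigenfunction with it, which, unwound through the substitutions, is exactly the assumption $c_{\mathrm{d}}=0$. So you have correctly located the crux, but the resolution you sketch does not exist in the generality claimed; as written, the representation \eqref{formuFd}--\eqref{decayofzeta} for an arbitrary $H^2$ solution can only be justified along these lines for $\Re\lambda\ge f'(0)$.
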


Here we use the standard notation in asymptotic analysis \cite{Erde56}, in which the symbol ``$\sim$" means  ``behaves like" as $x \to x_*$, more precisely, $f \sim g$ as $x \to x_*$ if $f - g = o(|g|)$ as $x \to x_*$ (or equivalently, $f/g \to 1$ as $x \to x_*$ if both functions are positive).

In view of Lemma \ref{lemdecayu}, for $x_0 \gg 1$ fixed we have $w(x) = C\zeta(x) \in H^2(x_0,+\infty)$. Therefore we conclude that $w \in H^2(\R;\C)$ as claimed. This finishes the proof of Lemma \ref{lemgoodw}. We are left to prove Lemma \ref{lemdecayu}.

\begin{proof}[Proof of Lemma \ref{lemdecayu}]
Consider the change of variables $u = \varphi_x v$. Upon substitution into the spectral equation $(\cL - \lambda)u = 0$ and using $\cL \varphi_x = 0$, we obtain 
\[
v_{xx} + \rho(x)v_x - \frac{\lambda}{D(\varphi)}v = 0,
\]
where,
\[
\rho(x) = \frac{2(D(\varphi)\varphi_x)_x}{D(\varphi) \varphi_x} + \frac{c}{D(\varphi)}.
\]
Now, let us define
\[
v(x) = \exp \left( - \frac{1}{2} \int_{x_0}^x \rho(s) \, ds \right) z(x).
\]
Substituting we obtain the second order equation
\begin{equation}
\label{eqzz}
z_{xx} - F(x,\lambda) z = 0,
\end{equation}
with
\[
F(x,\lambda) = \frac{\lambda}{D(\varphi)} + \frac{1}{2} \rho_x + \frac{1}{4} \rho^2.
\]

Now, it is to be observed that
\[
\begin{aligned}
\exp \left( - \frac{1}{2}\int_{x_0}^x \rho(s) \, ds \right) &= \frac{|(D(\varphi)\varphi_x)(x_0)|}{|D(\varphi)\varphi_x|} \exp \left( - \frac{c}{2} \int_{x_0}^x \frac{ds}{D(\varphi(s))} \, ds \right) \\ &= - \frac{C_0}{D(\varphi) \varphi_x} \exp \left( - \frac{c}{2} \int_{x_0}^x \frac{ds}{D(\varphi(s))} \, ds \right),
\end{aligned}
\]
where $C_0 = |(D(\varphi)\varphi_x)(x_0)| > 0$, and thanks to monotonicity of the profile ($\varphi_x < 0$). Let us now define
\[
\tTh(x):= \exp \left( - \frac{c}{2} \int_{x_0}^x \frac{ds}{D(\varphi(s))} \, ds \right).
\]
Observe that $\tTh \to 0$ as $x \to +\infty$, and also that
\[
u = \varphi_x v = \varphi_x \exp \left( - \frac{1}{2}\int_{x_0}^x \rho(s) \, ds \right) z(x) = - C_0 \frac{\tTh(x)}{D(\varphi)} z(x)  =: C_0 \zeta(x) \tTh(x).
\]
Here $\zeta = -z/D(\varphi)$ and $z$ is the decaying solution to equation \eqref{eqzz}. Since by definition $w(x) = \tTh(x)^{-1} u(x)$, we arrive at
\[
w(x) = C_0 \zeta(x).
\]
Thus, the goal is to show that $\zeta$ decays as $x \to +\infty$ fast enough, so that $\zeta \in L^2(x_0, +\infty)$. 

First we observe that, substituting the profile equation \eqref{profileq}, we may write
\[
\rho = \frac{2 (D(\varphi)\varphi_x)_x}{D(\varphi)\varphi_x} + \frac{c}{D(\varphi)} = - \frac{c}{D(\varphi)} - \frac{2f(\varphi)}{D(\varphi)\varphi_x}.
\]
By Lemma \ref{lemdecayFd}, the wave decays as $\varphi = O(e^{-f'(0)x/c}) \to 0$ as $x \to +\infty$. Making Taylor expansions near $\varphi = 0$ of the form 
\begin{equation}
\label{taylorexp}
D(\varphi) = \varphi(D'(0) + O(\varphi)), \qquad D(\varphi)^{-1} = \varphi^{-1}D'(0)^{-1} + O(1),
\end{equation}
we find that
\[
\rho(x) = \frac{c}{D'(0)} \frac{1}{\varphi(x)} + O(1) \to +\infty, \qquad \text{as} \;\, x \to +\infty. 
\]
Moreover,
\[
\frac{1}{4} \rho(x)^2 = \frac{c^2}{4 D'(0)^2} \frac{1}{\varphi(x)^2} \to +\infty, \qquad \frac{\lambda}{D(\varphi)} = \frac{\lambda}{D'(0)} \frac{1}{\varphi(x)} + O(1),
\]
as $x \to +\infty$. Notice that $\lambda/D(\varphi)$ diverges at order $O(1/|\varphi|)$ for $\lambda$ fixed. Upon differentiation of the expression for $\rho$ and substitution of the Taylor expansions around $\varphi$, one can show that
\[
\rho_x = \frac{3f'(0)}{D'(0)} \frac{1}{\varphi} + O(1), \qquad \varphi \sim 0^+.
\]
Thus, we reckon that
\[
F(x,\lambda) = \frac{\lambda}{D(\varphi)} + \frac{1}{2} \rho_x + \frac{1}{4} \rho^2 = \frac{c^2}{4D'(0)^2} \varphi^{-2} + \Big( \frac{\lambda}{D'(0)} - \frac{3f'(0)}{2D'(0)} \Big) \varphi^{-1} + O(1),
\]
for $\varphi \sim 0$ as $x \to +\infty$. The leading term is $\varphi^{-2} c^2 /(4D'(0)^2) > 0$, and since $\varphi = O(e^{-f'(0)x/c})$, then $F(x,\lambda)$ diverges at order $O(e^{2f'(0)x/c})$ as $x \to +\infty$. For fixed $\lambda \in \C$, the leading term does not depend on $\lambda$ and has a definite sign. Therefore, by taking real and imaginary parts, we can assume, without loss of generality, that $z$ decays as the real solution to the equation
\begin{equation}
\label{realzeq}
z_{xx} - F(x)z = 0,
\end{equation}
with 
\[
F(x) := \frac{c^2}{4 D'(0)^2} e^{2 f'(0)x/c}.
\]
We now apply the following theorem by Coppel (cf. \cite{Cop65}, pg. 122):

\begin{theorem}[Coppel \cite{Cop65}]
Let $F(x) > 0$, $F \in C^2$, be such that
\[
\int_{x_0}^x |F^{-3/2}F''| \, dx < +\infty.
\]
Then the homogeneous equation 
\begin{equation}
\label{homeqz}
z_{xx} - F(x)z = 0,
\end{equation}
has a fundamental system of solutions satisfying
\begin{equation}
\label{copdecay}
\begin{aligned}
z &\sim F(x)^{-1/4} \exp \left( \pm \int_{x_0}^x F(s)^{1/2} \, ds \right),\\
z_x &\sim F(x)^{1/4} \exp \left( \pm \int_{x_0}^x F(s)^{1/2} \, ds \right).
\end{aligned}
\end{equation}
\end{theorem}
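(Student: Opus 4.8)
The plan is to establish this as the classical Liouville--Green (or WKB) asymptotic estimate, by reducing \eqref{homeqz} to a perturbation of the constant-coefficient equation $y'' = y$ and then controlling the perturbation through a Volterra integral equation argument. Since $F > 0$ and $F \in C^2$, I would first introduce the phase variable
\[
t = \int_{x_0}^x F(s)^{1/2}\, ds,
\]
which is an increasing $C^2$ change of independent variable on $(x_0,+\infty)$. Because $z_{xx} = t_x^2 z_{tt} + t_{xx} z_t$ with $t_x = F^{1/2}$ and $t_{xx} = \tfrac12 F^{-1/2}F'$, the equation $z_{xx} - F z = 0$ becomes, after dividing by $F$,
\[
z_{tt} + p(t)\, z_t - z = 0, \qquad p := \tfrac12 F^{-3/2}F',
\]
where $p$ is regarded as a function of $t$.

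Next I would eliminate the first-order term by the substitution $z = F^{-1/4} y$. One checks that $\exp(-\tfrac12\int p\, dt) = F^{-1/4}$ up to a constant (since $\int p\, dt = \tfrac12\int F^{-1}F'\, dx = \tfrac12\log F + \text{const}$), so this is precisely the amplitude factor appearing in \eqref{copdecay}, and $y$ solves
\[
y_{tt} - \big(1 + r(t)\big) y = 0, \qquad r = \tfrac12 p_t + \tfrac14 p^2.
\]
The crux of this step is to show that the hypothesis forces $\int |r|\, dt < +\infty$. Converting back to $x$ via $dt = F^{1/2}\, dx$, the dominant contribution to $\int|r|\, dt$ is $\tfrac14\int |F^{-3/2}F''|\, dx$, which is finite by assumption; the remaining contributions are multiples of $\int F^{-5/2}(F')^2\, dx$, which I would control by the integration-by-parts identity
\[
\tfrac32 \int_{x_0}^x F^{-5/2}(F')^2\, ds = \int_{x_0}^x F^{-3/2}F''\, ds - \big[F^{-3/2}F'\big]_{x_0}^x,
\]
together with the slow-variation consequence $F^{-3/2}F' \to 0$ (which holds, e.g., for the exponential $F$ of \eqref{realzeq}). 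This yields $\int|r|\, dt < +\infty$.

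With $\int|r| < \infty$ in hand, I would construct the fundamental system for $y_{tt} = (1+r)y$ as a perturbation of $e^{\pm t}$. For the recessive solution, writing $y_-(t) = e^{-t}\big(1 + \phi(t)\big)$ leads to the Volterra equation
\[
\phi(t) = \tfrac12 \int_t^{\infty} \big(1 - e^{-2(s-t)}\big)\, r(s)\,\big(1 + \phi(s)\big)\, ds,
\]
whose kernel is bounded by $\tfrac12$ on $\{s \geq t\}$. Solving by successive approximation in the sup norm on $[T,+\infty)$, the contraction constant is controlled by $\int_T^{\infty}|r|\, ds$, which tends to $0$ as $T \to +\infty$; hence a unique small solution exists with $\phi(t) \to 0$, giving $y_-(t) = e^{-t}(1 + o(1))$, and differentiating the integral equation gives $y_{-,t}(t) = -e^{-t}(1 + o(1))$. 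The dominant solution $y_+ \sim e^{t}$ is obtained analogously by integrating from $T$, and the two are independent by their Wronskian.

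Finally I would translate the asymptotics back to $z$. From $z = F^{-1/4} y$ and $y \sim e^{\pm t}$ the first line of \eqref{copdecay} follows immediately, while $z_x = -\tfrac14 F^{-5/4}F'\, y + F^{1/4}\, y_t$ (using $y_x = F^{1/2} y_t$); the first term is lower order than $F^{1/4} y_t$ precisely because their ratio is $O(F^{-3/2}F') = o(1)$, yielding the second line of \eqref{copdecay}. I expect the main obstacle to be the integrability step: passing from the stated single hypothesis $\int|F^{-3/2}F''|\, dx < +\infty$ to control of the full error term $r$ genuinely requires the additional slow-variation information ($F^{-3/2}F'$ bounded, hence $\int F^{-5/2}(F')^2\, dx < \infty$); the convergence of the successive approximation and the discarding of the amplitude-derivative term in $z_x$ both hinge on this same slow variation.
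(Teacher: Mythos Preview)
The paper does not prove this theorem; it is quoted from Coppel's monograph and invoked as a black box, so there is no in-paper argument to compare against. Your outline is the standard Liouville--Green reduction (change to the phase variable $t=\int F^{1/2}\,dx$, peel off the amplitude $F^{-1/4}$, and treat the resulting equation $y_{tt}=(1+r)y$ as an $L^1$-perturbation of $y_{tt}=y$ via a Volterra scheme), which is exactly how Coppel establishes it.

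Your self-identified worry is the only genuine soft spot: from the single hypothesis $\int|F^{-3/2}F''|\,dx<\infty$ alone one does not automatically get $\int F^{-5/2}(F')^2\,dx<\infty$; the integration-by-parts identity you wrote shows the two are linked through the boundary term $F^{-3/2}F'$, and one needs this quantity to stay bounded. In Coppel's presentation this is handled (and in the paper's application $F(x)=Ce^{\beta x}$ makes all such terms trivially controlled), so for the purposes of this appendix your sketch is entirely adequate. The Volterra construction of $y_\pm$ and the translation back to $z$, $z_x$ are correct as written.
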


In order to simplify the notation, let us define
\[
\beta = \frac{2f'(0)}{c} > 0,
\]
so that  $\varphi = O(e^{-\beta x/2})$, as $x \to +\infty$. Since $F > 0$ diverges at order $O(e^{\beta x})$, then one can verify that $F^{-3/2} F'' \sim e^{-\beta x/2}$ is integrable in $(x_0, +\infty)$, if $x_0 \gg 1$ is chosen sufficiently large. Let $z_1(x)$ and $z_2(x)$ be two linearly independent solutions in $[x_0, +\infty)$ to the homogeneous equation \eqref{homeqz}, decaying and diverging at $+\infty$, respectively. Then, by Coppel's theorem, $z_1(x)$ behaves like
\[
\begin{aligned}
z_1(x) &\sim F(x)^{-1/4} \exp \left( - \int_{x_0}^x F(s)^{1/2} \, ds \right)\\
&= \big( \frac{2 D'(0)}{c}\big)^{1/2} e^{-\beta x / 4} \exp \left( - \frac{c}{2 D'(0)} \int_{x_0}^x e^{\beta s/2} \, ds \right)\\
&\leq \big( \frac{2 D'(0)}{c}\big)^{1/2} e^{-\beta x / 4} \exp \left( - \frac{c}{\beta D'(0)} e^{\beta x /2} \right),
\end{aligned}
\]
that is,
\begin{equation}
\label{decayz1}
z_1(x) \sim e^{-\beta x / 4} \exp \left( - \frac{c}{\beta D'(0)} e^{\beta x /2} \right) \to 0,
\end{equation}
as $x \to +\infty$. Likewise, Coppel's theorem implies that
\begin{equation}
\label{decayz1x}
\partial_x z_1(x) \sim e^{\beta x / 4} \exp \left( - \frac{c}{\beta D'(0)} e^{\beta x /2} \right) \to 0,
\end{equation}
as $x \to +\infty$. 

Therefore, the decaying solution to \eqref{realzeq} (and, consequently, the decaying solution to \eqref{eqzz}), behaves like
\[
z(x) \sim e^{-\beta x/4} \exp \left( - \frac{c}{\beta D'(0)} e^{\beta x/2}\right), \qquad x \to +\infty.
\]
This yields,
\[
\begin{aligned}
\zeta(x) = - \frac{z(x)}{D(\varphi(x))} &\sim e^{\beta x/4} \exp \left( - \frac{c}{\beta D'(0)} e^{\beta x/2}\right)\\
&\sim e^{f'(0) x/2c} \exp \left( - \frac{c^2}{2 D'(0) f'(0)} e^{f'(0) x/c}\right),
\end{aligned}
\]
when $x \to +\infty$, as claimed.

It can be shown, using the decay rate \eqref{copdecay} for the derivatives of solutions to the homogeneous equation, that $\zeta_x \in L^2(x_0, +\infty)$ as well, with a decay of the form $e^{kx} \exp ( - \bar{C} e^{f'(0)x/c})$, with $k > 0$, $\bar{C} > 0$. A similar procedure leads to $\zeta_{xx} \in L^2(x_0,+\infty)$. The details are omitted as the proof is analogous (the rapidly decaying term $\exp (- \bar{C} e^{f'(0)x/c})$ controls the possibly blowing up terms of form $e^{kx}$, so that derivatives remain in $L^2$). This concludes the proof.
\end{proof}

\def\cprime{$'$}

%
%
%
%
%
%

\end{document}